\DeclareSymbolFontAlphabet{\mathbb}{AMSb}
\newcommand\N{\ensuremath{\mathbb{N}}}
\newcommand\Z{\ensuremath{\mathbb{Z}}}
\newcommand\Q{\ensuremath{\mathbb{Q}}}
\DeclareMathOperator{\Homeo}{Homeo}
\DeclareMathOperator{\Homeop}{\ensuremath{Homeo^{\,+}}}
\DeclareMathOperator{\Mod}{Mod}
\DeclareMathOperator{\rank}{rank}
\DeclareMathOperator{\I}{\mathcal{I}}
\DeclareMathOperator{\A}{\mathcal{A}}
\DeclareMathOperator{\B}{\mathbb{B}}
\DeclareMathOperator{\Sfr}{\mathfrak{S}}
\DeclareMathOperator{\Sp}{Sp}
\DeclareMathOperator{\SL}{SL}
\DeclareMathOperator{\Arf}{Arf}
\DeclareMathOperator{\sgn}{sgn}
\DeclareMathOperator{\ba}{\ensuremath{\bm{\mathrm{a}}}}
\DeclareMathOperator{\bb}{\ensuremath{\bm{\mathrm{b}}}}
\DeclareMathOperator{\be}{\ensuremath{\bm{\mathrm{e}}}}
\DeclareMathOperator{\bx}{\ensuremath{\bm{\mathrm{x}}}}
\DeclareMathOperator{\by}{\ensuremath{\bm{\mathrm{y}}}}
\DeclareMathOperator{\bU}{\ensuremath{\bm{\mathrm{U}}}}
\DeclareMathOperator{\bV}{\ensuremath{\bm{\mathrm{V}}}}
\DeclareMathOperator{\0}{\bm{0}}
\DeclareMathOperator{\1}{\bm{1}}
\let\Cap\foobarbar
\DeclareMathOperator{\HH}{\mathcal{H}}
\DeclareMathOperator{\SSS}{\mathcal{S}}
\DeclareMathOperator{\OP}{\mathcal{O}}
\DeclareMathOperator{\Cap}{\ensuremath{\mathcal{C}\!\!\!\!\:\,\,\mathit{ap}}}
\DeclareMathOperator{\CB}{\ensuremath{\mathcal{CB}}}
\DeclareMathOperator{\GP}{\ensuremath{\mathcal{G}}}
\DeclareMathOperator{\GCap}{\ensuremath{\mathcal{GC}\!\!\!\!\:\,\,\mathit{ap}}}
\pgfplotsset{compat=1.18}
\numberwithin{equation}{section}
\newtheorem{theorem}{Theorem}[section]
\newtheorem{proposition}[theorem]{Proposition}
\newtheorem{lemma}[theorem]{Lemma}
\newtheorem{corollary}[theorem]{Corollary}
\newtheorem{fact}[theorem]{Fact}
\theoremstyle{definition}
\newtheorem{remark}[theorem]{Remark}
\newcommand{%
    \def\svgwidth{1\columnwidth}
    \import{./figures/}{.pdf_tex}
}[2][1]{%
    \def\svgwidth{#1\columnwidth}
    \import{./figures/}{#2.pdf_tex}
}
\title{On torsion in the homology of the Torelli group}
\author{Andrei Vladimirov}
\address{Lomonosov Moscow State University, Russia}
\subjclass{57M07 (Primary); 20J05; 20J06 (Secondary)}
\keywords{mapping class group, Torelli group, abelian cycles, homology of
groups, Birman--Craggs--Johnson homomorphism}
\email{andreykaere1@gmail.com}
\date{}
\begin{document}
    \begin{abstract}
        Let \(S_g\) be a closed, oriented surface of genus \(g\), and let
        \(\Mod(S_g)\) denote its mapping class group. The \emph{Torelli
        group} \(\I_g\) is the subgroup of \(\Mod(S_g)\) consisting of mapping
        classes that act trivially on \(H_1(S_g)\). For any collection of
        pairwise disjoint, separating simple closed curves on \(S_g\), the
        corresponding Dehn twists pairwise commute and determine a homology
        class in \(H_k(\I_g)\), which is called an \emph{abelian cycle}. We
        prove that the subgroup of \(H_k(\I_g)\) generated by such abelian
        cycles is a \(\Z/2\Z\)-vector space for all \(k\), and that it is
        finite-dimensional for \(k = 2\) and \(g \geq 4\).
    \end{abstract}

    \maketitle
    \markleft{}
    \markright{}

    \section{Introduction}

Let \(S_g^b\) be a compact oriented surface of genus \(g\) with \(b\) boundary
components. We omit \(b\) if it vanishes. The \emph{mapping class group} of
\(S_g^b\) is the group
\[
\Mod(S_g^b) = \pi_0(\Homeop(S_g^b, \partial S_g^b))
,\] 
where \(\Homeop(S_g^b, \partial S_g^b)\) is the group of orientation-preserving
homeomorphisms of \(S_g^b\) that fix each boundary component pointwise. There is a
natural action of \(\Mod(S_g)\) on \(H_1(S_g; \Z)\). This action preserves the
\emph{intersection form}, yielding a surjection 
\[
\Psi \colon \Mod(S_g) \to \Sp(2g, \Z)
.\] 
The \emph{Torelli group} of \(S_g\) is defined to be the kernel of \(\Psi\) and
is denoted by \(\I_g\). Similarly, one can define the Torelli group \(\I_g^1\)
for a surface with one boundary component. For \(b \in \{0,1\}\), we will denote by
\(\I_g^b = \I(S_g^b)\) the Torelli group of \(S_g^b\). 

It is a well-known fact that for \(g = 1\), the homomorphism \(\Psi\) is
actually an isomorphism, so \(\Mod(S_1) \cong \SL(2, \Z)\) and \(\I_1\) is
trivial. Mess showed~\cite{mess} that the group \(\I_2\) is an infinitely
generated free group. Johnson proved~\cite{johnson1} that the Torelli groups
\(\I_g\) and \(\I_g^1\) are finitely generated for \(g \geq 3\), and
explicitly determined a generating set.

It is an important and interesting problem to study the homology groups of the
Torelli groups \(\I_g\) for \(g \geq 3\). We denote the integral homology
\(H_k(G; \Z)\) by \(H_k(G)\).

Let us recall some known facts about homology groups \(H_k(\I_g)\):

\begin{itemize}
    \item Johnson~\cite{johnson3} computed the abelianization of the Torelli
    group \(\I_g\) for \(g \geq 3\):
    \[
    H_1(\I_g) \cong \Z^{\binom{2g}{3} - 2g} \oplus (\Z / 2 \Z)^{\binom{2g}{2} + 2g}
    .\] 
    
    \item Bestvina--Bux--Margalit~\cite{bestvina-bux-margalit} proved that for
    \(g \geq 3\), the  cohomological dimension of \(\I_g\) is \(3g - 5\),
    and that the top homology group \(H_{3g-5}(\I_g)\) is not finitely
    generated.

    \item Gaifullin~\cite{gaifullin} proved that \(H_k(\I_g)\) contains a free
    abelian group of infinite rank for \(2g-3 \leq k < 3g-5\), which
    implies that these homology groups are not finitely generated.
\end{itemize}

Studying \(H_2(\I_g)\) is especially important, as it is closely related to
the question of whether \(\I_g\) is finitely presented. Let us list some
known facts about \(H_2(\I_g)\):

\begin{itemize}
    \item In 2018, Kassabov and Putman~\cite{kassabov-putman} proved that for
    \(g \geq 3\), the group \(H_2(\I_g)\) is a finitely generated
    \(\Sp(2g, \Z)\)-module. However, this result provides no information about
    the finite generation of \(H_2(\I_g)\) as a group.

    \item In 2023, Minahan~\cite{minahan} proved that for \(g \geq 51\),
    the vector space \(H_2(\I_g; \Q)\) is finite-dimensional.

    \item In 2025, Minahan and Putman~\cite{minahan-putman} proved that for
    \(g \geq 5\), the vector space \(H_2(\I_g; \Q)\) is finite-dimensional.
    They further showed that \(H_2(\I_g; \Q)\) is an algebraic representation
    of \(\Sp(2g, \Z)\) for \(g \geq 6\). Consequently, using the computation
    of the algebraic part of \(H_2(\I_g; \Q)\) established by Kupers and
    Randal-Williams~\cite[Theorems 4.1 and 8.1]{kupers-williams} under the
    assumption that \(H_2(\I_g; \Q)\) is finite-dimensional for all large
    enough \(g\), Minahan and Putman were able to explicitly compute
    \(H_2(\I_g; \Q)\) for \(g \geq 6\).

\end{itemize}

The torsion in \(H_k(\I_g)\) remains largely mysterious for \(k \geq 2\).
In this paper, we study this torsion and, to state our main result precisely,
we first introduce some necessary definitions.

Let \(h_1, \dots, h_k\) be pairwise commuting elements of a group \(G\).
Consider the homomorphism \(\phi \colon \Z^k \to G\) that sends the generator of the
\(i^{\text{th}}\) factor to \(h_i\). The \emph{abelian cycle}
\(\A(h_1, \dots, h_k)\) is defined as the image of the standard generator
\(\mu_k \in H_k(\Z^k) \cong \Z\) under the pushforward map
\(\phi_{*} \colon H_k(\Z^k) \to H_k(G)\).

\subsection{Results}
We denote by \(H_k^{\mathrm{ab,sep}}(\I_g)\) the subgroup of \(H_k(\I_g)\)
generated by all abelian cycles of the form \(\A(T_{\delta_1}, \dots,
T_{\delta_k})\), where \(\delta_1, \dots, \delta_k\) are pairwise disjoint,
nonisotopic separating simple closed curves on \(S_g\), and \(T_{\delta}\) denotes the
left Dehn twist about \(\delta\). In this paper, we consider
this subgroup for \(k \geq 2\), since \(H_1^{\mathrm{ab,sep}}(\I_g)\)
was fully described by Johnson (see~\cite{johnson3}).

One of the main results of this paper is the
following theorem.

\begin{theorem} \label{thm:main-1}
    For \(g \geq 3\) and \(k \geq 2\), the following hold:
    \begin{itemize}
        \item If \(k < g\), the group \(H_k^{\mathrm{ab,sep}}(\I_g)\) is a
        \(\Z/2\Z\)-vector space, generated by abelian cycles
        \(\A(T_{\delta_1}, \dots, T_{\delta_k})\) with
        \(\delta_1, \dots, \delta_k\), pairwise disjoint, pairwise nonisotopic,
        separating simple closed curves of genus \(1\).

        \item If \(k \geq g\), the group \(H_k^{\mathrm{ab,sep}}(\I_g)\)
        is trivial.
    \end{itemize}
\end{theorem}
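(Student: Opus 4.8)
It suffices to establish three facts: \textbf{(A)} every abelian cycle $\A(T_{\delta_1},\dots,T_{\delta_k})$ of the form in the statement is a $\Z$-linear combination of such cycles in which every curve has genus $1$; \textbf{(B)} the subgroup generated by the genus-$1$ cycles is annihilated by $2$; and \textbf{(C)} if $\gamma_1,\dots,\gamma_k$ are pairwise disjoint genus-$1$ separating curves and $k\ge g$, then $\A(T_{\gamma_1},\dots,T_{\gamma_k})=0$. Indeed, (A) and (B) give that $H_k^{\mathrm{ab,sep}}(\I_g)$ is generated by $2$-torsion elements, hence is a $\Z/2\Z$-vector space generated by genus-$1$ cycles, while (A) and (C) give the vanishing for $k\ge g$.

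\textbf{Step (A): reduction to genus $1$.}
I would induct on $\sum_i\bigl(g(\delta_i)-1\bigr)$. If $\delta:=\delta_1$ has genus $h\ge 2$, bounding a subsurface $P$ of genus $h$, then inside $P$ — disjoint from $\delta$, from each other, and from every $\delta_j$ with $j\ge 2$ — I can choose a genus-$1$ separating curve $\gamma$ and a genus-$(h-1)$ separating curve $\delta'$. A chain or lantern relation in the mapping class group of a suitable subsurface, combined with Johnson's description of $H_1$ of the bordered Torelli groups (the Johnson homomorphism kills all separating twists, and the Birman--Craggs--Johnson homomorphism satisfies $\sigma(T_\delta)=\sigma(T_{\delta'})+\sigma(T_\gamma)$, these being the Arf polynomials of the corresponding genus sides), should yield $T_\delta=T_{\delta'}T_\gamma\,c$ where $c$ lies in the commutator subgroup of $\I(V)$, $V$ being the component of $S_g$ cut along $\{\delta_j:j\ge 2\}$ that contains $\delta$. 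Writing $\mathbf z=(T_{\delta_2},\dots,T_{\delta_k})$, property (1) gives $\A(T_\delta,\mathbf z)=\A(T_{\delta'},\mathbf z)+\A(T_\gamma,\mathbf z)+\A(c,\mathbf z)$, and $\A(c,\mathbf z)=0$ because $c$ is a product of commutators of elements of $\I(V)$, each of which commutes with $T_{\delta_2},\dots,T_{\delta_k}$. Applying the inductive hypothesis to the first two terms, and discarding cycles with a repeated or isotopic curve (which vanish), finishes the step.

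\textbf{Step (B): $2$-torsion, and Step (C): vanishing for $k\ge g$.}
For (B), fix pairwise disjoint genus-$1$ separating curves $\gamma_1,\dots,\gamma_k$. By property (1), $2\,\A(T_{\gamma_1},\dots,T_{\gamma_k})=\A(T_{\gamma_1}^2,T_{\gamma_2},\dots,T_{\gamma_k})$; since $[T_{\gamma_1}]$ is $2$-torsion in $H_1(\I_g)$ by Johnson, $T_{\gamma_1}^2$ is a product of commutators in $\I_g$, and the task is to produce such an expression inside $\I(V_1)$, the centralizer of $\{T_{\gamma_j}:j\ge 2\}$, after which the commutator property kills the cycle. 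For (C), after Step (A) one may assume the $\gamma_i$ are pairwise disjoint, non-isotopic, and of genus $1$; each bounds a one-holed torus $U_i$, and because an essential subsurface of a one-holed torus is an annulus or an isotopic copy of it, the $U_i$ are forced to be pairwise disjoint, so $k\le g$. This settles $k>g$. When $k=g$ the complement $R$ of $\bigcup U_i$ is planar, $R\cong S_0^g$, and $T_{\gamma_1},\dots,T_{\gamma_g}$ are precisely its boundary twists; a generalized lantern relation in $\Mod(R)$ rewrites a product of them in terms of twists about curves interior to $R$ (separating in $S_g$ and of lower genus), and pushing this relation through properties (1)--(2), the commutator property, and Step (A) should force $\A(T_{\gamma_1},\dots,T_{\gamma_g})=0$ — equivalently, $\langle T_{\gamma_1},\dots,T_{\gamma_g}\rangle\cong\Z^g$ maps into $\I_g$ through a subgroup on which its fundamental class already vanishes.

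\textbf{The main obstacle.}
The bookkeeping in (A) and the inequality $k\le g$ in (C) are routine. The genuine difficulty is uniform: for $k$ close to $g$, and for small $g$, there is no room to localize the relations needed in (B) and in the $k=g$ case of (C) — the subsurface $V_1$ (or $V$) is then of genus $2$, where $H_1$ of the Torelli group is infinitely generated and Johnson's detection by $\tau\oplus\sigma$ fails, so $[T_{\gamma_1}]$ need not be torsion there. Overcoming this seems to require exhibiting explicit relations in $\Mod(S_g)$ itself (for instance via hyperelliptic-type symmetries adapted to the curves $\gamma_i$, or via braid-group relations coming from the chain $T_{\gamma_i}=(T_{a_i}T_{b_i})^6$) rather than arguing by a disjoint-subsurface decomposition; I expect this to be the technical heart of the proof.
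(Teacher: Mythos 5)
Your proposal correctly guesses the overall architecture (reduce to genus-$1$ curves, prove $2$-torsion, kill the cycles for $k\ge g$), but the two steps you leave open are precisely the content of the theorem, and the mechanisms you sketch for them would not work as stated. In Step (A), the relation $T_\delta=T_{\delta'}T_\gamma c$ with $c\in[\I(V),\I(V)]$ is obtained from Johnson's computation of $H_1$ of the Torelli group, which is available only for closed or one-boundary surfaces of genus $\ge 3$; the subsurface $V$ on which $c$ must be supported (disjoint from \emph{all} the other $\delta_j$, which may lie on both sides of $\delta$) typically has several boundary components and genus $\le 2$, where $[T_\delta]$ need not even be torsion and $\tau\oplus\sigma$ does not detect the abelianization. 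The same localization defect is what blocks your Step (B), as you yourself note, and your proposed remedies (hyperelliptic symmetries, the chain relation $(T_aT_b)^6=T_\gamma$) are left entirely undeveloped; likewise the $k=g$ case of Step (C) rests on a ``generalized lantern relation should force vanishing,'' which is not an argument. So the technical heart of the theorem is missing from the proposal.

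For comparison, the paper never localizes Johnson's theorem into small subsurfaces. It first proves a BP-relation (Proposition~\ref{prop:bp-relation}): an abelian cycle $\A(T_\alpha T_{\alpha'}^{-1},T_{\delta_1},\dots,T_{\delta_{k-1}})$ depends only on the splitting $\HH_{\alpha,\alpha'}$ of $H_1(S_g)$, proved by conjugating one bounding pair to the other by an element of $\I_g$ fixing the $\delta_j$. Feeding this into the lantern relation yields two asymmetric three-term relations (Lemmas~\ref{lem:sum-relation-1} and~\ref{lem:sum-relation-2}), valid whenever a genus-$1$ piece can be arranged next to the pair of pants -- no abelianization theorem needed. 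Applying the two relations to the same triple of curves in the two different orders and adding gives $2\A=0$ even when every complementary piece has genus $\le 2$ (Proposition~\ref{prop:ab-cyc-ord-gen-3}, Lemma~\ref{lem:gen-2-relation}); Johnson's commutator statement (Corollary~\ref{cor:johnson3}) is used only when a genus-$\ge 3$ piece exists. The reduction to genus-$1$ curves and the vanishing for $k\ge g$ are then carried out by a combinatorial induction on the configuration of curves (outermost curves, grouping curves and their cap bases, Lemmas~\ref{lem:gen-0-part-curve}--\ref{lem:no-partitioning-curves}), with the vanishing produced by instances of Lemma~\ref{lem:sum-relation-1} in which a curve repeats (so the cycle dies by antisymmetry), not by a lantern relation in the planar complement. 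If you want to complete your outline, the BP-relation plus the two ordered sum relations is the missing ingredient to import.
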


\begin{remark}
    If \(k > 2g - 3\), the group \(H_k^{\mathrm{ab,sep}}(\I_g)\) is trivial,
    since there are no abelian cycles of the form \(\A(T_{\delta_1}, \dots,
    T_{\delta_k})\). Indeed, a simple Euler characteristic argument shows
    that \(S_g\) admits at most \(2g - 3\) pairwise disjoint,
    pairwise nonisotopic separating simple closed curves.
\end{remark}

\begin{remark}
    Vautaw~\cite{vautaw} showed that any abelian subgroup of the Torelli group
    \(\I_g\) has rank at most \(2g - 3\). Since \(\I_g\) is torsion-free, the
    subgroup of \(H_k(\I_g)\) generated by abelian cycles is trivial for \(2g
    - 3 < k \leq 3g - 5\), and hence is not of interest.
\end{remark}

\begin{corollary}
    The subgroup \(H_k^{\mathrm{ab,sep}}(\I_g) \subset H_k(\I_g)\) is finitely
    generated if and only if there are only finitely many distinct abelian
    cycles of the form \(\A(T_{\delta_1}, \dots, T_{\delta_k})\). In
    particular, if there are infinitely many such abelian cycles, then
    \(H_k(\I_g)\) is not finitely generated.
\end{corollary}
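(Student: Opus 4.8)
The plan is to deduce the corollary as a purely formal consequence of Theorem~\ref{thm:main-1}. The single structural input is the elementary fact that a vector space over \(\Z/2\Z\) is finitely generated as an abelian group --- equivalently, as a \(\Z/2\Z\)-module --- if and only if it is finite, that is, if and only if it is finite-dimensional.

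First I would record two observations that hold by definition: the abelian cycles \(\A(T_{\delta_1}, \dots, T_{\delta_k})\) (with \(\delta_1, \dots, \delta_k\) pairwise disjoint, pairwise nonisotopic separating simple closed curves) form a generating set of \(H_k^{\mathrm{ab,sep}}(\I_g)\), and each such cycle is itself an element of this subgroup. Combining these with Theorem~\ref{thm:main-1}, which in the relevant range (\(g \geq 3\), \(k \geq 2\)) identifies \(H_k^{\mathrm{ab,sep}}(\I_g)\) with a \(\Z/2\Z\)-vector space, the equivalence follows in both directions. If there are only finitely many distinct abelian cycles of the stated form, they constitute a finite generating set, so \(H_k^{\mathrm{ab,sep}}(\I_g)\) is finitely generated. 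Conversely, arguing by contraposition, if there are infinitely many distinct such abelian cycles, then \(H_k^{\mathrm{ab,sep}}(\I_g)\) contains infinitely many elements, hence is an infinite \(\Z/2\Z\)-vector space, hence infinite-dimensional, hence not finitely generated.

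For the final assertion, I would use that \(H_k(\I_g)\) is abelian and that every subgroup of a finitely generated abelian group is finitely generated: were \(H_k(\I_g)\) finitely generated, so would be \(H_k^{\mathrm{ab,sep}}(\I_g)\), contradicting the equivalence just established in the presence of infinitely many abelian cycles. Thus the existence of infinitely many distinct cycles \(\A(T_{\delta_1}, \dots, T_{\delta_k})\) forces \(H_k(\I_g)\) to be infinitely generated. I do not expect any genuine obstacle here: all of the work is contained in Theorem~\ref{thm:main-1}, and the corollary merely repackages the \(\Z/2\Z\)-vector space structure together with standard facts about finitely generated abelian groups.
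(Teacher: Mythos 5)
Your argument is correct and is exactly the intended (implicit) one: the paper states this as an immediate consequence of Theorem~\ref{thm:main-1}, using that a finitely generated abelian group in which every element has order at most $2$ is finite, together with the fact that subgroups of finitely generated abelian groups are finitely generated. No gaps; the reasoning matches the paper's approach.
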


The main result of this paper is the following theorem.

\begin{theorem} \label{thm:main-2}
    The \(\Z/2\Z\)-vector space \(H_2^{\mathrm{ab,sep}}(\I_g)\) is
    finite-dimensional for \(g \geq 4\).
\end{theorem}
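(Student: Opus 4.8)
The plan is to use Theorem~\ref{thm:main-1} to reduce the statement to a finiteness assertion about the abelian cycles themselves: since $H_2^{\mathrm{ab,sep}}(\I_g)$ is a $\Z/2$-vector space, it suffices to show that only finitely many distinct classes $\A(T_{\delta_1},T_{\delta_2})$ occur. By Theorem~\ref{thm:main-1} (with $k=2<g$), the group $H_2^{\mathrm{ab,sep}}(\I_g)$ is spanned by the $\A(T_{\delta_1},T_{\delta_2})$ with $\delta_1,\delta_2$ disjoint, nonisotopic, separating simple closed curves of genus $1$. A cut-and-paste argument shows that any such pair has a single topological type: cutting $S_g$ along $\delta_1\cup\delta_2$ yields $S_1^1\sqcup S_{g-2}^2\sqcup S_1^1$, where the two copies of $S_1^1$ are the genus-$1$ pieces bounded by $\delta_1$ and $\delta_2$. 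In particular $H_1$ of these two pieces spans complementary rank-$2$ symplectic summands $V_1,V_2\subset H_1(S_g)$, whose reductions modulo $2$ I denote $\overline V_1,\overline V_2$. I claim $\A(T_{\delta_1},T_{\delta_2})$ depends only on the unordered pair $\{\overline V_1,\overline V_2\}$. As these range over a finite set (pairs of rank-$2$ symplectic subspaces of the finite space $H_1(S_g;\Z/2)$), this proves the theorem.

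The first ingredient is a commutator trick. Fix $\delta_2$ and let $R\cong S_{g-1}^1$ be the complement of the genus-$1$ piece bounded by $\delta_2$; then $T_{\delta_2}$ commutes with every element of the Torelli group $\I(R)$, which we regard as a subgroup of $\I_g$, and $\delta_1$ lies in $R$ as a genus-$1$ separating curve, so $T_{\delta_1}\in\I(R)$. Let $\delta_1'\subset R$ be another genus-$1$ separating curve with the same mod-$2$ summand. By Johnson's computation of the abelianization of the Torelli group of a surface with one boundary component~\cite{johnson3}, a twist about a genus-$1$ separating curve is $2$-torsion in $H_1$ and its class is determined by that mod-$2$ summand (through the Birman--Craggs--Johnson invariants); hence $[T_{\delta_1}]=[T_{\delta_1'}]$ in $H_1(\I(R))$, so $T_{\delta_1}T_{\delta_1'}^{-1}=\prod_j[x_j,y_j]$ for some $x_j,y_j\in\I(R)$. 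Since all of the $x_j$, $y_j$ and all partial products commute with $T_{\delta_2}$, the additivity and commutator-vanishing properties of abelian cycles give
\[
\A(T_{\delta_1},T_{\delta_2})-\A(T_{\delta_1'},T_{\delta_2})=\A\bigl(T_{\delta_1}T_{\delta_1'}^{-1},T_{\delta_2}\bigr)=\sum_j\A\bigl([x_j,y_j],T_{\delta_2}\bigr)=0 .
\]
So the abelian cycle is unchanged when $\delta_1$ is replaced by any genus-$1$ separating curve disjoint from $\delta_2$ carrying the same mod-$2$ summand, and symmetrically in the second slot.

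The second ingredient is a connectivity argument. Given two pairs $(\delta_1,\delta_2)$ and $(\delta_1^\sharp,\delta_2^\sharp)$ of the above type with $\overline V_{\delta_i}=\overline V_{\delta_i^\sharp}$ for $i=1,2$, I first pick a genus-$1$ separating curve $\delta_1'$ carrying the summand $\overline V_{\delta_1}$ and disjoint from both $\delta_2$ and $\delta_2^\sharp$; applying the commutator trick along the chain $(\delta_1,\delta_2)\to(\delta_1',\delta_2)\to(\delta_1',\delta_2^\sharp)\to(\delta_1^\sharp,\delta_2^\sharp)$, every step leaves the abelian cycle fixed, so $\A(T_{\delta_1},T_{\delta_2})=\A(T_{\delta_1^\sharp},T_{\delta_2^\sharp})$. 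This establishes the claim, and hence Theorem~\ref{thm:main-2}.

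The step I expect to be the main obstacle is producing the auxiliary curve $\delta_1'$: one must realize a prescribed rank-$2$ mod-$2$ symplectic summand by a genus-$1$ separating simple closed curve lying in the complement of $\delta_2\cup\delta_2^\sharp$ — two genus-$1$ separating curves that in general intersect one another — while keeping the relevant complementary subsurfaces connected and of large enough genus for the commutator trick to apply on each side. Guaranteeing this room is exactly where the hypothesis $g\geq 4$ enters; the genus-$3$ case is too constrained. A second, more routine, point is to pin down the precise shape of Johnson's description of the $2$-torsion classes $[T_\delta]$ on which the commutator trick relies.
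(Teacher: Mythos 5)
Your overall strategy coincides with the paper's: reduce via Theorem~\ref{thm:main-1} to genus-$1$ pairs, show that \(\A(T_{\delta_1},T_{\delta_2})\) depends only on the mod-$2$ reductions of the two rank-$2$ symplectic summands, and conclude by finiteness of the mod-$2$ data. Your ``commutator trick'' is also essentially the paper's mechanism: it is exactly the use of Johnson's theorem that \(\ker\tau\cap\ker\sigma=[\I(\Sigma),\I(\Sigma)]\) on the genus-\((g-1)\) complementary subsurface, which is what underlies Proposition~\ref{prop:key-relation}. Up to this point the argument is sound (and the hypothesis \(g\ge 4\) enters correctly, since the complement of a genus-$1$ cap must have genus \(\ge 3\)).

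The genuine gap is the connectivity step, which you yourself flag but do not resolve, and which is in fact where all the work of the paper's proof lies. Your chain \((\delta_1,\delta_2)\to(\delta_1',\delta_2)\to(\delta_1',\delta_2^\sharp)\to(\delta_1^\sharp,\delta_2^\sharp)\) requires a genus-$1$ separating curve \(\delta_1'\), disjoint from \emph{both} \(\delta_2\) and \(\delta_2^\sharp\), realizing the prescribed mod-$2$ summand \(\bV_{\delta_1}\). Such a curve need not exist: two genus-$1$ separating curves with the same mod-$2$ summand can fill \(S_g\) (or leave a complement whose homology does not support the required summand), in which case no essential curve at all is disjoint from both and the three-step chain breaks. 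Replacing it by a longer chain, or by the algebraic reformulation ``find a rank-$2$ symplectic subgroup \(V_1'\) orthogonal to both \(V_2\) and \(V_2^\sharp\) with prescribed mod-$2$ reduction,'' is still nontrivial, because reductions of intersections of orthogonal complements can be smaller than expected; this is precisely the content of the paper's Proposition~\ref{prop:injective-2} together with Lemma~\ref{lem:gen-3-subsurface} and Lemma~\ref{lem:mod-2-sigma}, whose proof constructs the intermediate symplectic subgroups explicitly, with congruence conditions modulo \(2\) and \(4\), using Proposition~\ref{prop:key-relation-lin} at each replacement. Without an argument of this kind (or some other proof that the relevant ``moves'' connect any two configurations with the same mod-$2$ data), your proof of the key invariance claim — and hence of the theorem — is incomplete.
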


\begin{remark}
    The question of whether \(H_2^{\mathrm{ab,sep}}(\I_3)\) is
    finite-dimensional remains open. In~\cite{gaifullin-spectral}, Gaifullin
    studied the spectral sequence associated with the action of \(\I_3\) on
    the complex of cycles introduced by
    Bestvina--Bux--Margalit~\cite{bestvina-bux-margalit}. His work provides
    evidence suggesting that \(H_2^{\mathrm{ab,sep}}(\I_3)\) may not be
    finitely generated.
\end{remark}

Brendle and Farb~\cite{brendle-farb} showed that
\(\A(T_{\delta_1}, T_{\delta_2})\) is nontrivial in \(H_2(\I_g)\) for two
disjoint, nonisotopic, separating simple closed curves
\(\delta_1\) and \(\delta_2\). However, the analogous statement does not
always hold in \(H_k(\I_g)\) for \(k > 2\). We now give a criterion for when
\(\A(T_{\delta_1}, \dots, T_{\delta_k})\) vanishes in \(H_k(\I_g)\).

\begin{theorem} \label{thm:main-3}
    Let \(\delta_1, \dots, \delta_k\) be pairwise disjoint, pairwise
    nonisotopic, separating simple closed curves on \(S_g\)
    with \(k < g\). Let \(\Sigma_1, \dots, \Sigma_{k+1}\) be the surfaces
    obtained by cutting \(S_g\) along \(\delta_1, \dots, \delta_k\). Then
    \(\A(T_{\delta_1}, \dots, T_{\delta_k}) = 0\) in \(H_k(\I_g)\) if and only
    if at least one of the surfaces \(\Sigma_1, \dots, \Sigma_{k+1}\) has
    genus \(0\).
\end{theorem}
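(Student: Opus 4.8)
The plan is to prove the two implications separately: the vanishing direction by a Pontryagin-type product reduction followed by a first-homology computation in the Torelli groups of subsurfaces, and the non-vanishing direction by detecting the abelian cycle with Birman--Craggs--Johnson homomorphisms. (I will not need Theorems~\ref{thm:main-1}--\ref{thm:main-2}.)

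Suppose first that one of the pieces, say \(\Sigma_0\), has genus \(0\). Then \(\Sigma_0\) meets at least three of the curves (a genus-\(0\) piece bounded by one or two of the \(\delta_j\) would be a disk or an annulus, impossible here), so after relabelling \(\partial\Sigma_0=\delta_1\cup\dots\cup\delta_m\) with \(3\le m\le k\). Since \(\delta_{m+1},\dots,\delta_k\) are disjoint from \(\delta_1,\dots,\delta_m\), the subgroups \(\langle T_{\delta_1},\dots,T_{\delta_m}\rangle\) and \(\langle T_{\delta_{m+1}},\dots,T_{\delta_k}\rangle\) of \(\I_g\) commute elementwise; splitting \(\mu_k\in H_k(\Z^k)\) along \(\Z^k=\Z^m\times\Z^{k-m}\) via the Künneth theorem then exhibits \(\A(T_{\delta_1},\dots,T_{\delta_k})\) as the image of \(\A(T_{\delta_1},\dots,T_{\delta_m})\otimes\A(T_{\delta_{m+1}},\dots,T_{\delta_k})\) under the homomorphism \(H_m(\I_g)\otimes H_{k-m}(\I_g)\to H_k(\I_g)\) induced by multiplication of these commuting subgroups, so it suffices to prove \(\A(T_{\delta_1},\dots,T_{\delta_m})=0\). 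For this, let \(M\) be the closure of the component of \(S_g\setminus(\delta_1\cup\dots\cup\delta_{m-1})\) that contains \(\delta_m\), so that \(M=\Sigma_0\cup\Sigma'\) with \(\Sigma'\) the subsurface lying beyond \(\delta_m\); then \(M\) has boundary \(\delta_1\cup\dots\cup\delta_{m-1}\), positive genus \(g(\Sigma')\), and contains \(\delta_m\) as a separating curve having the genus-\(0\) surface \(\Sigma_0\) on one side. Let \(\I(M)\le\I_g\) denote the group of mapping classes of \(S_g\) supported in \(M\) and acting trivially on \(H_1(S_g;\Z)\); it contains \(T_{\delta_1},\dots,T_{\delta_m}\) because \(\partial M\) consists of curves separating \(S_g\). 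The key point is a Johnson-type computation showing that \([T_{\delta_m}]\) lies in the subgroup of \(H_1(\I(M))\) generated by \([T_{\delta_1}],\dots,[T_{\delta_{m-1}}]\), the genus-\(0\)-ness of \(\Sigma_0\) being exactly what makes this hold. Granting it, write \(T_{\delta_m}=w\cdot\prod_p[\phi_p,\psi_p]\) with \(w\) a word in \(T_{\delta_1},\dots,T_{\delta_{m-1}}\) and \(\phi_p,\psi_p\in\I(M)\). Substituting this into the last slot of \(\A(T_{\delta_1},\dots,T_{\delta_m})\) and using properties (1) and (2) of abelian cycles (every factor commutes with \(T_{\delta_1},\dots,T_{\delta_{m-1}}\)), the contribution of \(w\) vanishes by antisymmetry, leaving \(\A(T_{\delta_1},\dots,T_{\delta_{m-1}},\prod_p[\phi_p,\psi_p])\); isotoping each \(\phi_p,\psi_p\) off a collar of \(\partial M\) makes it commute with every \(T_{\delta_j}\) (\(j<m\)), so property (1) splits the product and the relation \(\A([\phi_p,\psi_p],z_1,\dots,z_{m-1})=0\) recalled in the introduction kills each term. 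Hence \(\A(T_{\delta_1},\dots,T_{\delta_m})=0\).

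For the converse, assume every \(\Sigma_i\) has positive genus. Root the adjacency tree of the cut system at \(\Sigma_{k+1}\), and for \(1\le i\le k\) let \(P_i\) be the set of pieces lying in the subtree below the edge \(\delta_i\). The indicator vectors \(\mathbbm{1}_{P_1},\dots,\mathbbm{1}_{P_k}\in(\Z/2\Z)^{\{\Sigma_1,\dots,\Sigma_k\}}\) are linearly independent: if \(\sum_{i\in T}\mathbbm{1}_{P_i}=0\), then evaluating at the child vertex of the shallowest edge \(\delta_i\) with \(i\in T\) leaves exactly one surviving summand. In each piece \(\Sigma_p\) choose a handle \((a_p,b_p)\) (a pair of simple closed curves meeting once), possible since \(g(\Sigma_p)\ge1\), and complete these to a symplectic basis of \(H_1(S_g;\Z/2\Z)\); for \(1\le p\le k\) let \(q_p\) be the quadratic refinement of the mod-\(2\) intersection form equal to \(1\) on \(a_p,b_p,a_{k+1},b_{k+1}\) and to \(0\) on the remaining basis vectors. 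Then \(\Arf(q_p)=0\) and \(\Arf(q_p|_R)=\mathbbm{1}[\Sigma_p\subseteq R]+\mathbbm{1}[\Sigma_{k+1}\subseteq R]\) for every union \(R\) of pieces, so the Birman--Craggs--Johnson homomorphism \(\mu_{q_p}\colon\I_g\to\Z/2\Z\) satisfies \(\mu_{q_p}(T_{\delta_i})=\Arf(q_p|_{A_i})=\mathbbm{1}[\Sigma_p\in P_i]\), where \(A_i\) is either side of \(\delta_i\) (the two values agree because \(\Arf(q_p)=0\)) and the root lies below no edge. Packaging \(\sigma=(\mu_{q_1},\dots,\mu_{q_k})\colon\I_g\to(\Z/2\Z)^k\), we get \(\sigma_*\A(T_{\delta_1},\dots,T_{\delta_k})=\A(\sigma(T_{\delta_1}),\dots,\sigma(T_{\delta_k}))\), which is the Pontryagin product in \(H_k((\Z/2\Z)^k)\) of the linearly independent classes \(\mathbbm{1}_{P_i}\in H_1\) and is therefore nonzero; hence \(\A(T_{\delta_1},\dots,T_{\delta_k})\ne0\) in \(H_k(\I_g)\).

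The main obstacle is the first-homology computation invoked in the vanishing step: \(H_1\) of the Torelli group of a surface with several boundary components is less standard than the closed or one-boundary case, so one must either set up the relevant quadratic-form / Birman--Craggs machinery in that generality, or reduce the assertion \([T_{\delta_m}]\in\langle[T_{\delta_1}],\dots,[T_{\delta_{m-1}}]\rangle\) to a known case by capping off boundary components and tracking how the separating-twist classes transform, using throughout that \(\Sigma_0\) contributes no handles. I expect the remaining ingredients — the Pontryagin splitting, the abelian-cycle bookkeeping, the tree combinatorics, and the handle construction of the detecting homomorphisms — to be routine.
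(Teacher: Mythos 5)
Your non-vanishing half is essentially sound and is, at bottom, the same detection mechanism the paper uses: the paper pushes the cycle forward by \(\sigma_k\colon H_k^{\mathrm{ab,sep}}(\I_g)\to\wedge^k\B_2'\) after reducing to the case where all curves are outermost, while you evaluate against explicit Birman--Craggs homomorphisms attached to Arf-zero quadratic forms and check linear independence by rooting the dual tree; both work, and your handle/tree construction is a clean alternative to the paper's choice of an adapted symplectic basis (just choose the completion of your basis adapted to the pieces, so that \(\Arf(q_p|_R)\) really is a subsum over handles).

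The vanishing half, however, has a genuine gap, in fact two. First, the reduction ``it suffices to prove \(\A(T_{\delta_1},\dots,T_{\delta_m})=0\)'' is not justified: the multiplication map is a homomorphism only on \(A\times B\) with \(A=\langle T_{\delta_1},\dots,T_{\delta_m}\rangle\), \(B=\langle T_{\delta_{m+1}},\dots,T_{\delta_k}\rangle\), so K\"unneth gives a map \(H_m(A)\otimes H_{k-m}(B)\to H_k(\I_g)\), not a map \(H_m(\I_g)\otimes H_{k-m}(\I_g)\to H_k(\I_g)\) (there is no Pontryagin product on the homology of the nonabelian group \(\I_g\)). Since \(A\cong\Z^m\), the class \(\mu_m\) is nonzero in \(H_m(A)\); knowing it dies in \(H_m(\I_g)\) helps only if the bounding chain (equivalently, your commutator expression for \(T_{\delta_m}\)) can be taken inside the centralizer of \(B\) -- but your \(\phi_p,\psi_p\) live in \(\I(M)\), and \(M\supset\Sigma'\) may contain the curves \(\delta_{m+1},\dots,\delta_k\), so they need not commute with those twists. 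Second, the engine of the argument -- that \([T_{\delta_m}]\) lies in \(\langle[T_{\delta_1}],\dots,[T_{\delta_{m-1}}]\rangle\) inside \(H_1(\I(M))\) for the multi-boundary subsurface \(M\) -- is asserted, not proved, and you yourself flag it as the main obstacle; Johnson's abelianization results are only available for closed and one-boundary surfaces, and the analogue for Torelli groups of subsurfaces with several boundary components (with the support control you need) is precisely the hard content. The paper avoids both issues: it never drops to the shorter cycle, but instead keeps all \(k\) twists and runs an induction on \(m\) using only the lantern relation and the BP-relation (Lemma~\ref{lem:sum-relation-2}), after first invoking Lemma~\ref{lem:change-adm-part} to arrange positive genus on the far sides of \(\delta_1,\delta_2\) so that the genus-one ``room'' required by Lemma~\ref{lem:sum-relation-2} exists. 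As it stands, your vanishing direction is incomplete.
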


\subsection{Basic properties of abelian cycles}
Throughout the paper, we will repeatedly use the following standard properties
of abelian cycles. These follow immediately from the identifications
\(H_k(\Z^k) \cong \wedge^k \Z^k \cong \Z\).

\begin{fact} \label{ft:lin}
    Let \(h_1', h_1, \dots, h_k \in G\). In \(H_k(G)\), we have
    \[
    \A(h_1 h_1', h_2, \dots, h_k) = \A(h_1, h_2, \dots, h_k) + \A(h_1', h_2, \dots, h_k)
    ,\]
    whenever all three abelian cycles are defined.
\end{fact}

\begin{fact} \label{ft:sym}
    Let \(h_1, \dots, h_k \in G\), and let \(\pi\) be a permutation of \(\{1,
    \dots, k\}\). Then in \(H_k(G)\), we have
    \[
    \A(h_{\pi(1)}, \dots, h_{\pi(k)}) = \sgn(\pi) \A(h_1, \dots, h_k)
    .\]
\end{fact}

\begin{corollary} \label{cor:comm}
    If \(x, y \in G\) commute with \(z_1, \dots, z_{n-1} \in G\), then
    \[
    \A([x, y], z_1, \dots, z_{n-1}) = 0 
    ,\]
    where \([x, y] = x y x^{-1} y^{-1}\) denotes the commutator of \(x\) and \(y\).
\end{corollary}

\subsection{Notation and conventions}
Let us introduce some notation and conventions used throughout the paper. A
closed curve is called \emph{simple} if it has no self-intersections, and a
simple closed curve is called \emph{essential} if it is not nullhomotopic.
Unless stated otherwise, by a \emph{curve} we mean an essential simple closed
curve. For a curve \(\alpha\) on \(S_g^b\), we denote by \([\alpha]\) its
homology class in \(H_1(S_g^b)\). For homology classes \(a, b \in
H_1(S_g^b)\), we denote by \(a \cdot b\) their algebraic intersection number.

For integral homology classes \(x, y, \dots\), we denote their reductions
modulo \(2\) by bold letters \(\bx, \by, \dots\). Similarly, for symplectic
summands \(V, U, \dots\) of \(H_1(S_g)\), their reductions
modulo \(2\) are denoted by \(\bV, \bU, \dots\).

We denote by \(g(S)\) the genus of a surface \(S\). The \emph{genus} of a
separating simple closed curve \(\gamma\) is defined as the smaller of the
genera of the two surfaces bounded by \(\gamma\) and is denoted by
\(g(\gamma)\). For an element \(h \in G\), we denote by \([h]\) its homology
class in \(H_1(G)\).

\subsection{Outline of the paper}
This paper is organized as follows. In \S\ref{sec:basic-relations}, we review
known relations in the mapping class group and the Torelli group. In
\S\ref{sec:case-k-2-g-gt-4}, we prove that abelian cycles of the form
\(\A(T_{\delta_1}, T_{\delta_2})\) have order~\(2\) in \(H_2(\I_g)\) for \(g
\geq 4\) (Proposition~\ref{prop:abelian-cycle-order}). In
\S\ref{sec:bp-relation}, we establish an auxiliary relation
(Proposition~\ref{prop:bp-relation}) and use it to prove the main
relations (Propositions~\ref{lem:sum-relation-1} and~\ref{lem:sum-relation-2})
in \S\ref{sec:relations-in-hk}. There, we also prove
Theorem~\ref{thm:main-1} for \(k = 2\) and \(g \geq 3\)
(Propositions~\ref{prop:ab-cyc-ord-gen-3} and~\ref{prop:ab-sep-gen}). In
\S\ref{sec:BCJ}, we derive a key relation between abelian cycles
(Proposition~\ref{prop:key-relation}) using the
Birman--Craggs--Johnson homomorphism. In \S\ref{sec:h2-fin-dim}, we prove
Theorem~\ref{thm:main-2}. Finally, in \S\ref{sec:order-2-hk}, we complete the
proof of Theorem~\ref{thm:main-1} by treating the remaining cases. There, we
also prove Theorem~\ref{thm:main-3}.

\subsection{Acknowledgments}
The author is grateful to his supervisor,
A.~A.~Gaifullin, for suggesting the topic of this work and for his guidance
and constructive feedback, which greatly improved the paper.
The author wishes to thank the anonymous referee for a very careful reading,
catching some typos and pointing out a flaw in the proof of
Theorem~\ref{thm:main-3} in an earlier version of this text.

The author was supported by the Theoretical Physics and Mathematics
Advancement Foundation \textquote{BASIS} (grant 25-8-2-20-1).

    \section{Basic relations in \(\Mod(S)\) and \(\I(S)\)}
\label{sec:basic-relations}

In this section, we recall some known relations in the groups \(\Mod(S)\) and
\(\I(S)\). We begin by restating the following result of Johnson
(see~\cite[Corollary from Lemma 3]{johnson3}).

\begin{proposition} \label{prop:johnson3}
    Let \(\delta\) be a separating simple closed curve on \(S_g^b\) with \(g
    \geq 3\) and \(b \in \{0, 1\}\). Then there exist elements \(h_1,
    f_1, \dots, h_m, f_m \in \I_g^b\) such that
    \[
    T_{\delta}^2 = [h_1, f_1] \cdots [h_m, f_m]
    .\] 
    In particular, we have
    \begin{equation} \label{eq:johnson-ord-2}
    2[T_\delta] = 0 \quad \text{in } H_1(\I_g^b), \quad b \in \{0, 1\}
    .\end{equation}
\end{proposition}

\begin{corollary} \label{cor:johnson3}
    Let \(\delta\) be a simple closed curve on \(S_g\) bounding a subsurface
    \(\Sigma\) of genus at least \(3\). Then there exist elements
    \(h_1, f_1, \dots, h_m, f_m \in \I_g\) that are all supported on \(\Sigma\) such that
    \[
    T_\delta^2 = [h_1, f_1] \cdots [h_m, f_m]
    .\]
\end{corollary}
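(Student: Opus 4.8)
The statement to prove: Let $\delta$ be a simple closed curve on $S_g$ bounding a subsurface $\Sigma$ of genus at least 3. Then there exist elements $h_1, f_1, \dots, h_m, f_m \in \I_g$ that are all supported on $\Sigma$ such that $T_\delta^2 = [h_1, f_1] \cdots [h_m, f_m]$.

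The key idea: We have Proposition \ref{prop:johnson3} which says $T_\delta^2$ is a product of commutators in $\I_g^b$ for $b \in \{0,1\}$. The point is to apply this to the subsurface $\Sigma$ itself. Since $\delta$ bounds $\Sigma$ of genus $h := g(\Sigma) \geq 3$, we can think of $\Sigma$ as a copy of $S_h^1$ (genus $h$, one boundary component), with $\delta$ being (isotopic to a curve parallel to) the boundary. Actually, $\delta$ is isotopic to $\partial \Sigma$ pushed slightly into the interior. The Dehn twist $T_\delta$ about this boundary-parallel curve, viewed as an element of $\Mod(S_h^1)$, lies in the Torelli group $\I_h^1$ (boundary twists act trivially on homology). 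By Proposition \ref{prop:johnson3} applied to $S_h^1$ with $h \geq 3$, there exist $h_1, f_1, \dots, h_m, f_m \in \I_h^1 = \I(\Sigma)$ with $T_\delta^2 = [h_1, f_1]\cdots[h_m, f_m]$.

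So the plan is:

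First, I would fix an identification of $\Sigma$ with $S_h^1$ where $h = g(\Sigma) \geq 3$, sending $\partial\Sigma$ to the boundary and $\delta$ to a curve isotopic to the boundary in $\Sigma$. Second, I would observe that the natural map $\Mod(S_h^1) = \Mod(\Sigma, \partial\Sigma) \to \Mod(S_g)$ obtained by extending homeomorphisms by the identity on $S_g \setminus \Sigma$ carries $\I(\Sigma) = \I_h^1$ into $\I_g$: indeed, a mapping class supported on $\Sigma$ that acts trivially on $H_1(\Sigma; \partial\Sigma) \cong H_1(\Sigma)$ also acts trivially on $H_1(S_g)$, since $H_1(S_g)$ is built from $H_1(\Sigma)$ and $H_1(S_g \setminus \Sigma)$ via Mayer--Vietoris and the map is the identity on the complementary piece. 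Third, I would apply Proposition \ref{prop:johnson3} to $S_h^1$ (legitimate since $h \geq 3$) to write $T_\delta^2 = [h_1, f_1] \cdots [h_m, f_m]$ with $h_i, f_i \in \I(S_h^1)$; pushing this relation forward under the inclusion-induced map gives the same relation in $\I_g$ with all $h_i, f_i$ supported on $\Sigma$.

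The only subtle point — and the thing worth spelling out carefully rather than the main obstacle — is verifying that elements of $\I(\Sigma) = \I_h^1$, once included into $\Mod(S_g)$, actually land in $\I_g$. This requires noting that the relevant action is on $H_1(S_g; \Z)$ and that a mapping class fixing $\Sigma$ setwise and acting trivially on $H_1(\Sigma)$ (equivalently on $H_1(\Sigma, \partial \Sigma)$, since $\partial\Sigma$ is connected) automatically fixes all of $H_1(S_g)$ pointwise. This is a standard fact (see e.g.\ the treatment of the Torelli group and subsurface inclusions), and it is clean because $\partial\Sigma = \delta$ is a single separating curve, so $H_1(S_g)$ splits as an orthogonal direct sum of the images of $H_1(\Sigma)$ and $H_1(\overline{S_g \setminus \Sigma})$. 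There is no serious obstacle here; the corollary is essentially a relativization of Proposition \ref{prop:johnson3}, and the proof is short.
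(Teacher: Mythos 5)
Your proposal is correct and is exactly the intended argument: the paper states the corollary without proof, and the evident route is the one you give — identify \(\Sigma\) with \(S_h^1\), \(h = g(\Sigma) \geq 3\), apply Proposition~\ref{prop:johnson3} with \(b=1\) to the boundary-parallel curve, and push the relation forward under the inclusion \(\Mod(S_h^1) \to \Mod(S_g)\), which sends \(\I_h^1\) into \(\I_g\) because \(\delta\) is separating, so \(H_1(S_g)\) splits as \(H_1(\Sigma) \oplus H_1(\overline{S_g \setminus \Sigma})\). Your verification of this last inclusion is the only point needing care, and you handle it correctly.
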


One of the most important relations in Torelli group theory is the
\emph{lantern relation} in \(\Mod(S_g)\) (see~\cite[Proposition 5.1]{farbmarg}).

\begin{proposition}[Lantern relation]
    Let \(b_1, b_2, b_3, b_4\) be the boundary components of \(S_0^4\), and let
    \(x, y, z\) be the simple closed curves on \(S_0^4\) as shown in
    Figure~\ref{fig:lantern-relation}. In \(\Mod(S_0^4)\), the following
    relation holds
    \[
    T_{b_1} T_{b_2} T_{b_3} T_{b_4} = T_x T_y T_z
    .\]
    
    \begin{figure}[H]
        \centering
    \def\svgwidth{.8\columnwidth}
    \import{./figures/}{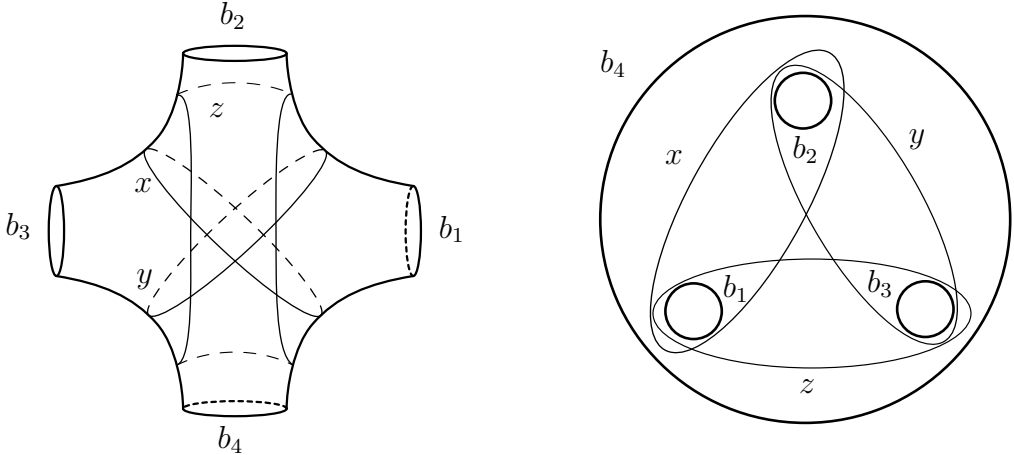}

        \caption{Two views of the lantern relation.}
        \label{fig:lantern-relation}
    \end{figure}
    
    For any orientation-preserving embedding \(S_0^4 \hookrightarrow S\), the
    corresponding curves in \(S\) satisfy the same relation in \(\Mod(S)\).
\end{proposition}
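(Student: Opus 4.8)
The plan is to prove the relation in \(\Mod(S_0^4)\) by the Alexander method and then transport it to an arbitrary embedded copy of \(S_0^4\) for free.

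\emph{Setup.} First I would fix a concrete model: realize \(S_0^4\) as a round disk with three disjoint open sub-disks deleted, so \(b_4\) is the outer boundary circle and \(b_1,b_2,b_3\) the inner ones, placed at the vertices of a triangle, with \(x,y,z\) the curves enclosing \(\{b_1,b_2\}\), \(\{b_2,b_3\}\), \(\{b_1,b_3\}\) respectively, drawn in pairwise minimal position as in Figure~\ref{fig:lantern-relation}. Choose three disjoint properly embedded arcs \(\alpha_1,\alpha_2,\alpha_3\), where \(\alpha_j\) joins \(b_4\) to \(b_j\), is disjoint from the other two inner circles, and (after a small isotopy) meets in one point each exactly those two of \(x,y,z\) that enclose \(b_j\). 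Cutting \(S_0^4\) successively along \(\alpha_1,\alpha_2,\alpha_3\) merges \(b_4\) with \(b_1\), then with \(b_2\), then with \(b_3\), the Euler characteristic increasing \(-2\to-1\to0\to1\), so the result is a single disk. By the Alexander method for surfaces with boundary (see~\cite{farbmarg}), an element of \(\Mod(S_0^4)\) is determined by the isotopy classes, rel endpoints, of the images of \(\alpha_1,\alpha_2,\alpha_3\); hence it suffices to show that \(\sigma := T_{b_1}T_{b_2}T_{b_3}T_{b_4}\) and \(\tau := T_xT_yT_z\) send each \(\alpha_j\) to the same arc up to isotopy rel endpoints.

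\emph{Shape of the answer.} It helps to know in advance that \(\sigma\) and \(\tau\) can differ only by a product of boundary twists. Capping each \(b_i\) with a once-punctured disk gives the standard central extension \(1\to\langle T_{b_1},\dots,T_{b_4}\rangle\to\Mod(S_0^4)\to\Mod(S_{0,4})\to1\) onto the mapping class group of the four-punctured sphere, in which each \(T_{b_i}\) becomes trivial; the image of \(\tau\) there is the product of the Dehn twists about the three curves separating the punctures in pairs, and a direct computation shows this product acts by an inner automorphism of \(\pi_1(S_{0,4})\cong F_3\), hence is trivial in \(\Mod(S_{0,4})\). Therefore \(\sigma\tau^{-1}=T_{b_1}^{m_1}\cdots T_{b_4}^{m_4}\) for some integers \(m_i\), and the arc computation only has to pin these down: for each \(j\) one applies \(T_z\), then \(T_y\), then \(T_x\) to \(\alpha_j\), removes all bigons, and reads off how the surviving arc winds around \(b_j\) and around \(b_4\). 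The target is \(\sigma(\alpha_j)=T_{b_j}T_{b_4}(\alpha_j)\): one positive loop around \(b_j\) near the inner endpoint and one positive loop around \(b_4\) near the outer endpoint (the remaining two twists in \(\sigma\) fix \(\alpha_j\), whose support they avoid). Performing this for \(j=1,2,3\) gives \(\sigma=\tau\), i.e.\ all \(m_i=0\).

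\emph{Main obstacle, and the embedded case.} The real work is that last computation, and it is also where one must be careful: keep \(x,y,z\) in pairwise minimal position throughout, fix an orientation convention making every twist a left twist, and follow the single crossing of \(\alpha_j\) with each of its two partner curves through the three successive twists without introducing or cancelling bigons. An error of \(\pm1\) in any winding number would alter the relation by a boundary twist, so it must be done with explicit pictures; the preceding remark — that the discrepancy is a priori a product of the \(T_{b_i}\) — is the natural way to keep the bookkeeping honest. Finally, once the relation holds in \(\Mod(S_0^4)\), the last assertion is immediate: any orientation-preserving embedding \(S_0^4\hookrightarrow S\) induces a homomorphism \(\Mod(S_0^4)\to\Mod(S)\) carrying the Dehn twist about each simple closed curve \(c\subset S_0^4\) to the twist about its image, and applying it to the established relation yields the relation in \(\Mod(S)\).
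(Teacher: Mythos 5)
Your proposal is essentially the same argument as the paper's source: the paper gives no proof of its own, citing \cite{farbmarg} (Proposition 5.1), and that cited proof is precisely your Alexander-method verification on the three arcs joining \(b_4\) to \(b_1, b_2, b_3\), with the statement for an embedded \(S_0^4 \hookrightarrow S\) following, as you say, from the induced homomorphism \(\Mod(S_0^4) \to \Mod(S)\) sending twists to twists. Your preliminary capping step (showing the two sides can only differ by a product of boundary twists) is a sensible bookkeeping device not present in the standard treatment, but note that the decisive step — checking by explicit pictures that \(T_x T_y T_z\) and \(T_{b_j} T_{b_4}\) send each arc \(\alpha_j\) to isotopic arcs rel endpoints — is described rather than carried out in your write-up, and that verification is the entire content of the cited proof.
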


    \section{Abelian cycles have order \(2\) for \(k = 2\) and \(g \geq 4\)}
\label{sec:case-k-2-g-gt-4}

In this section, we use Proposition~\ref{prop:johnson3} and
Corollary~\ref{cor:johnson3} to extend~\eqref{eq:johnson-ord-2} to
\(H_2(\I_g)\) for \(g \geq 4\). For the case \(g = 3\), we use
certain auxiliary relations in \(H_2(\I_g)\)
(see~\S\ref{sec:bp-relation} and~\S\ref{sec:relations-in-hk}).

\begin{proposition} \label{prop:abelian-cycle-order}
Let \(\delta_1, \delta_2\) be disjoint, nonisotopic, separating simple closed
curves on \(S_g\) with \(g \geq 4\). Then we have
\[
2 \A(T_{\delta_1}, T_{\delta_2}) = 0
.\]
\end{proposition}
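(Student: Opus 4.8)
The plan is to convert the assertion into a factorization problem for $T_{\delta_1}^2$ (or $T_{\delta_2}^2$). Since $\delta_1$ and $\delta_2$ are disjoint, $T_{\delta_1}$ and $T_{\delta_2}$ commute, so by property~(1) of abelian cycles
\[
2\,\A(T_{\delta_1}, T_{\delta_2}) = \A(T_{\delta_1}^2, T_{\delta_2})
,\]
and it suffices to show that the right-hand side vanishes. I will produce elements $h_1, f_1, \dots, h_m, f_m \in \I_g$, each commuting with $T_{\delta_2}$, such that $T_{\delta_1}^2 = [h_1,f_1]\cdots[h_m,f_m]$. Given such a factorization, property~(1) applied repeatedly --- legitimate because every factor and every partial product commutes with $T_{\delta_2}$, so all intermediate abelian cycles are defined --- yields $\A(T_{\delta_1}^2, T_{\delta_2}) = \sum_{i=1}^{m} \A([h_i,f_i], T_{\delta_2})$, and each summand is $0$ by the consequence of property~(1) recalled above, since $h_i$ and $f_i$ commute with $T_{\delta_2}$.

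To build the factorization, cut $S_g$ along $\delta_1 \cup \delta_2$. As $\delta_1, \delta_2$ are disjoint, essential, separating and nonisotopic, this produces a chain of three subsurfaces $P_1 \cup_{\delta_1} P_2 \cup_{\delta_2} P_3$ with $\partial P_1 = \delta_1$, $\partial P_2 = \delta_1 \sqcup \delta_2$ and $\partial P_3 = \delta_2$; let $a, b, c$ denote their genera, so $a + b + c = g$. The key observation is that $b \ge 1$: otherwise $P_2$ would be a genus-$0$ surface with two boundary components, hence an annulus, forcing $\delta_1$ and $\delta_2$ to be isotopic. Consequently
\[
(a+b) + (b+c) = g + b \ge g + 1 \ge 5
,\]
so at least one of $a+b$ and $b+c$ is $\ge 3$; since the claim is symmetric in $\delta_1$ and $\delta_2$, we may assume $a+b \ge 3$.

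Set $R = P_1 \cup_{\delta_1} P_2$, a subsurface of $S_g$ with the single boundary component $\delta_2$ and genus $a + b \ge 3$, inside which $\delta_1$ is a separating simple closed curve. Applying Proposition~\ref{prop:johnson3} to $\delta_1$ in $R \cong S_{a+b}^1$ gives $h_i, f_i \in \I(R)$ with $T_{\delta_1}^2 = \prod_i [h_i,f_i]$. Via the inclusion $R \hookrightarrow S_g$ these elements lie in $\I_g$, because $H_1(R)$ injects into $H_1(S_g)$ with complementary summand $H_1(P_3)$, on which any mapping class supported on $R$ acts as the identity. Finally, each $h_i$ and $f_i$ is supported on $R$ and so fixes $\delta_2 = \partial R$; being orientation-preserving, it conjugates $T_{\delta_2}$ to itself, i.e.\ commutes with $T_{\delta_2}$. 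This is exactly the factorization needed. I expect the only real subtlety to be the genus bookkeeping forcing $b \ge 1$ and hence $\max(a+b,\, b+c) \ge 3$: a direct appeal to Corollary~\ref{cor:johnson3} would instead require one of $\delta_1, \delta_2$ to bound a subsurface of genus $\ge 3$, which can fail, so passing to the larger subsurface $R$ and invoking Proposition~\ref{prop:johnson3} is the crucial move (and is also where $g \ge 4$ is used).
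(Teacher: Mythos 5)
Your proof is correct and follows essentially the same strategy as the paper: reduce \(2\A(T_{\delta_1}, T_{\delta_2})\) to \(\A(T_{\delta}^2, T_{\delta'})\) for a suitable labelling and kill it via Johnson's factorization (Proposition~\ref{prop:johnson3}) of the square of a separating twist into commutators of Torelli elements supported away from, hence commuting with, the other twist. The only difference is organizational: your bound \((a+b)+(b+c)=g+b\ge 5\) lets you always work inside the genus-\(\ge 3\) side of one curve containing the other curve, so you need only Proposition~\ref{prop:johnson3} in a single case, whereas the paper splits into two cases and also invokes Corollary~\ref{cor:johnson3}.
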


\begin{proof}
    Since \(g \geq 4\) and the separating curves \(\delta_1, \delta_2\) are disjoint, 
    one of them bounds a subsurface \(\Sigma\) of genus at least \(3\). 
    For concreteness, we will assume that \(\delta_1\) bounds such a subsurface. 
    If \(\delta_2\) lies in \(\Sigma\), then by Proposition~\ref{prop:johnson3} we have
    \[
    T_{\delta_2}^2 = [h_1, f_1] \cdots [h_m, f_m]
    ,\] 
    where \(h_j\) and \(f_j\) are supported on \(\Sigma\) for all \(j = 1,
    \dots, m\). It follows from Facts~\ref{ft:lin},~\ref{ft:sym} and
    Corollary~\ref{cor:comm} that
    \[
    2 \A(T_{\delta_1}, T_{\delta_2}) = \A(T_{\delta_1}, T_{\delta_2}^2) =
    \A(T_{\delta_1}, [h_1, f_1]) + \cdots +
    \A(T_{\delta_1}, [h_m, f_m]) = 0
    ,\] 
    since each \(h_j\) and \(f_j\) commutes with \(T_{\delta_1}\).

    If \(\delta_2\) does not lie in \(\Sigma\), then by
    Corollary~\ref{cor:johnson3} we have
    \[
    T_{\delta_1}^2 = [\widetilde{h}_1, \widetilde{f}_1] \cdots
    [\widetilde{h}_n, \widetilde{f}_n]
    ,\] 
    where \(\widetilde{h}_j\) and \(\widetilde{f}_j\) are supported on
    \(\Sigma\) for all \(j = 1, \dots, n\).
    It follows again from Facts~\ref{ft:lin},~\ref{ft:sym} and
    Corollary~\ref{cor:comm} that
    \[
    2 \A(T_{\delta_1}, T_{\delta_2}) = \A(T_{\delta_1}^2, T_{\delta_2}) =
    \A([\widetilde{h}_1, \widetilde{f}_1], T_{\delta_2}) + \cdots +
    \A([\widetilde{h}_n, \widetilde{f}_n], T_{\delta_2}) = 0
    .\] 
\end{proof}

\begin{corollary} \label{cor:z2-vector-space}
    For \(g \geq 4\), the group \(H_2^{\mathrm{ab,sep}}(\I_g)\) is a
    \(\Z/2\Z\)-vector space.
\end{corollary}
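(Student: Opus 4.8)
The plan is to deduce this immediately from Proposition~\ref{prop:abelian-cycle-order}. By definition, \(H_2^{\mathrm{ab,sep}}(\I_g)\) is the subgroup of \(H_2(\I_g)\) generated by the abelian cycles \(\A(T_{\delta_1}, T_{\delta_2})\) as \(\delta_1, \delta_2\) range over all pairs of disjoint, nonisotopic, separating simple closed curves on \(S_g\). The first step is simply to apply Proposition~\ref{prop:abelian-cycle-order}, which (using \(g \geq 4\)) tells us that each such generating class \(x = \A(T_{\delta_1}, T_{\delta_2})\) satisfies \(2x = 0\).

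The second step is to promote this from the generators to the whole subgroup. Since \(H_2(\I_g)\) is abelian, the set \(\{\, y \in H_2(\I_g) : 2y = 0 \,\}\) is a subgroup; by the previous step it contains every generator of \(H_2^{\mathrm{ab,sep}}(\I_g)\), hence it contains all of \(H_2^{\mathrm{ab,sep}}(\I_g)\). Thus \(2z = 0\) for every \(z \in H_2^{\mathrm{ab,sep}}(\I_g)\), and an abelian group annihilated by \(2\) is exactly a \(\Z/2\Z\)-vector space (the scalar multiplication by \(\Z/2\Z\) being well defined precisely because multiplication by \(2\) is zero).

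There is no genuine obstacle here: all the content lies in Proposition~\ref{prop:abelian-cycle-order}. The only point that deserves to be stated explicitly is that the property of being generated by elements of order dividing \(2\) propagates to the entire subgroup because the ambient group \(H_2(\I_g)\) is abelian; I would write this out in one sentence rather than leave it implicit.
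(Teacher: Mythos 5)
Your argument is correct and coincides with the paper's (implicit) one: the paper treats this corollary as immediate from Proposition~\ref{prop:abelian-cycle-order}, since a subgroup of an abelian group generated by elements killed by \(2\) is itself killed by \(2\), hence a \(\Z/2\Z\)-vector space. Your one extra sentence making that propagation explicit is exactly the right amount of detail.
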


    \section{The BP-relation in \(H_k(\I_g)\)} \label{sec:bp-relation}

In this section, we derive an auxiliary relation in \(H_k(\I_g)\).
We begin by introducing the necessary terminology.

A pair of simple closed curves \((\alpha, \alpha')\) is called a \emph{bounding pair} if
\(\alpha\) and \(\alpha'\) are disjoint, nonisotopic, nonseparating simple
closed curves on \(S_g\) such that \(\alpha \cup \alpha'\) separates
\(S_g\).
We say that a bounding pair \((\alpha, \alpha')\) is \emph{oriented} if we fix
one of the two possible orientations. By definition, a bounding pair satisfies
\([\alpha] = \pm [\alpha']\), and choosing an orientation means simultaneously
orienting \(\alpha\) and \(\alpha'\) so that \([\alpha] = [\alpha']\).

Cutting \(S_g\) along \(\alpha \cup \alpha'\) yields two surfaces \(R_1\) and
\(R_2\), with \(R_1\) lying \emph{to the left} of \(\alpha\). Let \(\HH_1 =
H_1(R_1)\) and \(\HH_2 = H_1(R_2)\) denote the subspaces of \(H_1(S_g)\)
induced by the inclusions \(R_1 \hookrightarrow S_g\) and \(R_2
\hookrightarrow S_g\). We set \(\HH_{\alpha, \alpha'} = (\HH_1, \HH_2)\).

\begin{remark}
    For an oriented bounding pair \((\alpha, \alpha')\), it is \emph{not true}
    that \(\HH_{\alpha, \alpha'} = \HH_{\alpha', \alpha}\). In fact,
    \(\HH_{\alpha, \alpha'} = \HH_{\overline{\alpha}', \overline{\alpha}}\),
    where \(\overline{\alpha}\) denotes \(\alpha\) with the reversed orientation.
\end{remark}

\begin{proposition}[BP-relation] \label{prop:bp-relation}
    Let \((\alpha, \alpha')\) and \((\beta, \beta')\) be oriented bounding
    pairs such that \(\HH_{\alpha, \alpha'} = \HH_{\beta, \beta'}\). Let
    \(\delta_1, \dots, \delta_{k-1}\) be pairwise disjoint, pairwise nonisotopic
    separating simple closed curves on \(S_g\), each disjoint from
    \(\alpha, \alpha', \beta, \beta'\). Then in \(H_k(\I_g)\) we have
    \[
    \A(T_{\alpha} T_{\alpha'}^{-1}, T_{\delta_1}, \dots, T_{\delta_{k-1}})
    = \A(T_{\beta} T_{\beta'}^{-1}, T_{\delta_1}, \dots, T_{\delta_{k-1}})
    .\] 
\end{proposition}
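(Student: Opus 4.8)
The plan is to reduce the general case to a sequence of elementary moves, each of which changes one of the bounding pairs by a homeomorphism supported away from the separating curves $\delta_1,\dots,\delta_{k-1}$, and to show that each such move leaves the abelian cycle unchanged. The key observation is that the hypothesis $\HH_{\alpha,\alpha'}=\HH_{\beta,\beta'}$ says the two bounding pairs ``cut off the same symplectic splitting'' of $H_1(S_g)$; by the change-of-coordinates principle (together with the fact that an oriented bounding pair is determined up to the Torelli group by its induced splitting of homology), there should be an element $\varphi\in\I_g$ supported on the complement of $\delta_1\cup\cdots\cup\delta_{k-1}$ carrying $(\alpha,\alpha')$ to $(\beta,\beta')$. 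The more robust route, which I would actually carry out, avoids invoking such a global $\varphi$ and instead works locally.

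First I would observe that since $[\alpha]=[\alpha']=[\beta]=[\beta']=:a$ (a fixed primitive class, after orienting) and all four curves are disjoint from each $\delta_i$, everything takes place inside the subsurface $S'$ of $S_g$ cut off appropriately by the $\delta_i$ that contains these curves; without loss of generality all four lie in a single component, since the other components contribute nothing. Second, I would use the standard fact (Johnson; see also \cite{farbmarg}) that $T_\alpha T_{\alpha'}^{-1}$ is, up to the choice of orientation, conjugate in $\I_g$ to $T_\beta T_{\beta'}^{-1}$ by an element that can be chosen supported on $S'$ and disjoint from all the $\delta_i$: the point is that $\HH_{\alpha,\alpha'}$ records exactly the conjugacy invariant of a bounding-pair map in the relevant subsurface. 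Writing $\psi$ for this conjugating element, naturality of abelian cycles under the automorphism $\mathrm{conj}_\psi$ of $\I_g$, together with the fact that $\psi$ commutes with every $T_{\delta_i}$ (so the ambient homology class $[\psi]$ and the conjugation act trivially on $H_k$ at the level of this abelian cycle), gives
\[
\A(T_\alpha T_{\alpha'}^{-1}, T_{\delta_1},\dots,T_{\delta_{k-1}})
= \A(T_\beta T_{\beta'}^{-1}, T_{\delta_1},\dots,T_{\delta_{k-1}}).
\]
Here I would use that conjugation by $\psi$ induces the identity on $H_k(\I_g)$ when $\psi\in\I_g$, since $\I_g$ acts trivially on its own homology via inner automorphisms (inner automorphisms always act trivially on group homology), and that $\mathrm{conj}_\psi$ sends the commuting tuple $(T_\alpha T_{\alpha'}^{-1}, T_{\delta_1},\dots)$ to $(T_\beta T_{\beta'}^{-1}, T_{\delta_1},\dots)$ precisely because $\psi$ fixes each $\delta_i$ and carries the first pair to the second.

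The main obstacle is establishing that the conjugating element $\psi$ can be taken \emph{supported away from the $\delta_i$} — i.e. a purely local change-of-coordinates statement for oriented bounding pairs inducing the same homological splitting, carried out inside the cut-open subsurface rather than in $S_g$. If the local statement is unavailable in exactly the form needed (for instance when the relevant component of the cut surface has too small genus relative to the splitting), the fallback is to factor the passage from $\HH_{\alpha,\alpha'}$ to $\HH_{\beta,\beta'}$ into a chain of ``small'' moves: replacing $\alpha'$ by a curve obtained from it by a single bounding-pair or handle swap, each supported in a small subsurface disjoint from the $\delta_i$, and checking invariance of the abelian cycle one move at a time via the commutator-killing property $\A([x,y],z_1,\dots)=0$ recalled in the introduction. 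I expect the genus hypothesis implicit in the setup (enough room in each cut component) to make the clean conjugation argument go through, so the write-up would present that as the main line and relegate the degenerate bookkeeping to a remark.
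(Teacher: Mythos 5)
Your argument is essentially the paper's: produce an element $f\in\I_g$ carrying $(\alpha,\alpha')$ to $(\beta,\beta')$ while fixing each $\delta_i$, and then conclude by pushing the abelian cycle forward, using that inner automorphisms of $\I_g$ act trivially on $H_k(\I_g)$. The one step you outsource to a ``standard fact'' --- the existence of such a Torelli conjugator in the relative form, i.e.\ fixing all the $\delta_i$ --- is exactly what the paper establishes as Lemma~\ref{lem:BP-splitting}, by cutting along the two bounding pairs, matching the complementary pieces, and correcting by homeomorphisms supported in those pieces so that the glued map acts trivially on $H_1(S_g)$; your local sketch is consistent with that construction, so the approach is the same, but in a complete write-up this lemma carries the real content and must be proved rather than cited.
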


To prove this proposition, we first state the following lemma, which
immediately implies the relation.

\begin{lemma} \label{lem:BP-splitting}
    Let \(\alpha, \alpha', \beta, \beta'\) be oriented, nonseparating simple
    closed curves on \(S_g\) representing the same homology class in \(H_1(S_g)\),
    such that \(\alpha\) is disjoint from \(\alpha'\) and \(\beta\) is disjoint from
    \(\beta'\).

    \begin{enumerate}
        \item If the oriented bounding pairs \((\alpha, \alpha')\) and
        \((\beta, \beta')\) satisfy \(\HH_{\alpha, \alpha'} = \HH_{\beta,
        \beta'}\), then there exists \(f \in \I_g\) such that \(f(\alpha) =
        \beta\) and \(f(\alpha') = \beta'\).

        \item Moreover, if \(\delta_1, \dots, \delta_n\) are pairwise
        disjoint, pairwise nonisotopic, separating simple closed curves disjoint from
        \(\alpha, \alpha', \beta, \beta'\), then \(f\) can be chosen so that 
        \(f(\delta_j) = \delta_j\) for all \(j = 1, \dots, n\).
    \end{enumerate}
\end{lemma}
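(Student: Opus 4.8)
The plan is to realize the desired $f \in \I_g$ by a "change of coordinates" principle, exploiting the fact that Torelli-equivalence classes of bounding pairs (and more generally of configurations of curves) are governed entirely by the induced splitting of $H_1(S_g)$. First I would set up the combinatorial picture: cutting $S_g$ along $\alpha \cup \alpha'$ produces $R_1$ (to the left of $\alpha$) and $R_2$, each a surface with two boundary components, with $\HH_1 = H_1(R_1)$ and $\HH_2 = H_1(R_2)$ the associated isotropic-complement pair in $H_1(S_g)$; likewise for $\beta \cup \beta'$. Since $[\alpha] = [\beta]$ (a primitive class, as the curves are nonseparating) and $\HH_{\alpha,\alpha'} = \HH_{\beta,\beta'}$, the two cut surfaces have the same genus on each side, so there is an orientation-preserving homeomorphism $F \colon S_g \to S_g$ with $F(\alpha) = \beta$, $F(\alpha') = \beta'$ respecting orientations. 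The issue is that $F$ need not lie in $\I_g$: it induces some $\Psi(F) \in \Sp(2g,\Z)$, which however preserves $[\alpha]$ and carries $\HH_1 \to \HH_1$, $\HH_2 \to \HH_2$ by hypothesis.

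Next I would correct $F$ to land in the Torelli group. The key point is that the subgroup of $\Sp(2g,\Z)$ preserving the class $[\alpha]$ together with the splitting $(\HH_1, \HH_2)$ is realized by mapping classes supported away from $\alpha \cup \alpha'$ — indeed, the stabilizer of the bounding pair inside $\Mod(S_g)$ surjects onto the stabilizer of $(\,[\alpha], \HH_1, \HH_2)$ in $\Sp(2g,\Z)$, because $\Mod(R_1)$ and $\Mod(R_2)$ (mapping class groups of the pieces, with boundary free to rotate) surject onto the symplectic groups of $\HH_1/\langle[\alpha]\rangle$ and $\HH_2/\langle[\alpha]\rangle$ respectively, and the relative twisting along $\alpha$ accounts for the remaining transvection-type generators. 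So I can find $G \in \Mod(S_g)$ fixing $\beta$ and $\beta'$ with $\Psi(G) = \Psi(F)$; then $f := G^{-1} F$ lies in $\I_g$ and still satisfies $f(\alpha) = \beta$, $f(\alpha') = \beta'$. This proves part (1). Alternatively, one can quote Johnson's / Putman's classification of Torelli orbits of bounding pairs directly, which states precisely that $(\alpha,\alpha')$ and $(\beta,\beta')$ are in the same $\I_g$-orbit iff $[\alpha]=[\beta]$ and the induced splittings agree; I would cite this to streamline the argument.

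For part (2), I would run the same argument "rel the $\delta_j$". The curves $\delta_1, \dots, \delta_n$, being separating and disjoint from $\alpha, \alpha', \beta, \beta'$, lie in the complement $S_g \setminus (\alpha \cup \alpha')$ and cut it into pieces; I choose the initial homeomorphism $F$ so that $F(\delta_j) = \delta_j$ for all $j$ (possible since the $\delta_j$ sit in corresponding positions relative to the bounding pairs — this is again change of coordinates, now for the full configuration $\alpha \cup \alpha' \cup \bigcup \delta_j$, whose topological type is determined by the same homological data). Then the correction element $G$ should also be chosen supported in the complement of $\bigcup \delta_j$: this is where I need the stabilizer in $\Sp(2g,\Z)$ of the enlarged data to be realized by mapping classes fixing each $\delta_j$, which follows by applying the surjectivity statements piece-by-piece to the components of $S_g$ cut along $\bigcup \delta_j$ and reassembling. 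The resulting $f = G^{-1}F \in \I_g$ then fixes every $\delta_j$ and sends $(\alpha,\alpha')$ to $(\beta,\beta')$.

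The main obstacle I anticipate is the bookkeeping in part (2): verifying that the correction element $G$ can simultaneously be taken to fix all the $\delta_j$ pointwise (or setwise, as needed) while still killing the symplectic defect of $F$. This requires checking that cutting along $\bigcup_j \delta_j$ does not obstruct realizing the needed symplectic automorphisms — i.e., that each relevant piece still has enough genus, or that the pieces of genus $0$ contribute trivially to the symplectic data, so no symplectic correction is "trapped" on the wrong side of a $\delta_j$. A clean way to handle this is induction on $n$: peel off one $\delta_n$ at a time, applying part (1)'s mechanism to one component of $S_g$ cut along $\delta_n$ at each stage; since $\delta_n$ is separating, $\Psi(F)$ respects the corresponding orthogonal splitting of $H_1(S_g)$ and the correction factors accordingly. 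Once this reduction is in place, the rest is routine change-of-coordinates.
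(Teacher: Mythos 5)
Your proposal is correct in substance, and it proves the lemma by a route that is a reorganization of the paper's argument rather than a different idea: both come down to ``change of coordinates plus a correction realized on subsurfaces.'' The paper cuts along the \(\delta_j\) \emph{first}, observes that the hypothesis \(\HH_{\alpha,\alpha'}=\HH_{\beta,\beta'}\) forces all four curves into a single complementary piece \(\Sigma\) (so that any \(f\) supported in \(\Sigma\) fixes the \(\delta_j\) for free), then cuts \(\Sigma\) along the two bounding pairs and builds \(f\) directly as a glued map \(\psi_1\circ\phi_1\cup\psi_2\circ\phi_2\), where the boundary-fixing corrections \(\psi_i\) are chosen so that the extension by the identity acts trivially on \(H_1(S_g)\). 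You instead take a global homeomorphism \(F\) with \(F(\alpha)=\beta\), \(F(\alpha')=\beta'\) and kill its symplectic defect by an element \(G\) of the stabilizer of the configuration, relying on surjectivity of that stabilizer onto the stabilizer of \(([\alpha],\HH_1,\HH_2)\) in \(\Sp(2g,\Z)\) (or on citing the known classification of \(\I_g\)-orbits of bounding pairs), and handle the \(\delta_j\) by relativizing/induction. What your route buys is a cleaner statement of \emph{why} the lemma is true (Torelli orbits of bounding pairs are classified by the splitting data) and the option to quote that classification; what it costs is that you need the \emph{full} symplectic stabilizer in the image, not just \(\Sp(\HH_i/\langle[\alpha]\rangle)\) together with transvections along \([\alpha]\): the unipotent ``boundary-pushing'' elements (sending a dual class \(y\mapsto y+v\) with \(v\in\HH_i\)) must also be realized, e.g.\ by twists about curves in \(R_i\) homologous to \([\alpha]+v\), so your phrase about ``remaining transvection-type generators'' should be expanded accordingly. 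Similarly, in part (2) the claim that the full configurations \((\alpha,\alpha',\{\delta_j\})\) and \((\beta,\beta',\{\delta_j\})\) have the same topological type rel each \(\delta_j\) does follow from the homological hypothesis, but only via the observation (implicit in the paper as well) that every complementary piece of the \(\delta\)-system has genus at least \(1\) because the \(\delta_j\) are essential, so \(\HH_{\alpha,\alpha'}=\HH_{\beta,\beta'}\) genuinely detects on which side of each pair every \(\delta_j\) sits; a sentence making this explicit would close the only real loose end in your sketch. Neither point is a gap in the sense of a failing step---they are the same realization facts the paper itself asserts without proof---so your argument goes through.
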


\begin{proof}
    Let \(\Sigma_1, \dots, \Sigma_{n+1}\) be the subsurfaces of \(S_g\)
    obtained by cutting along \(\delta_1, \dots, \delta_n\) (where \(n = 0\)
    and \(\Sigma_1 = S_g\) if there are no \(\delta_i\)'s).  Since
    \(\HH_{\alpha, \alpha'} = \HH_{\beta, \beta'}\), all the four curves lie in
    some \(\Sigma \in \{\Sigma_1, \dots, \Sigma_{n+1}\}\).  Hence, it suffices
    to construct a homeomorphism \(f \in \Homeo(\Sigma, \partial \Sigma)\)
    such that:
    \begin{itemize}
        \item it takes \((\alpha, \alpha')\) to \((\beta, \beta')\); and
        \item its extension by the identity on \(S_g\) acts trivially on
        \(H_1(S_g)\).
    \end{itemize}

    Let \(N_1, N_2 \subset \Sigma\) be the subsurfaces obtained by cutting
    \(\Sigma\) along \(\alpha \cup \alpha'\), with \(N_1\) lying to the left
    of \(\alpha\). Similarly, let \(M_1, M_2 \subset \Sigma\) be the
    subsurfaces obtained by cutting \(\Sigma\) along \(\beta \cup \beta'\),
    with \(M_1\) lying to the left of \(\beta\). Since \(\HH_{\alpha, \alpha'}
    = \HH_{\beta, \beta'}\), it follows that \(N_1\) is homeomorphic to
    \(M_1\) and \(N_2\) is homeomorphic to \(M_2\).

    We can choose a homeomorphism \(\phi_1 \colon N_1 \to M_1\) fixing each
    \(\delta_j\) and taking \((\alpha, \alpha')\) to \((\beta, \beta')\).
    Moreover, there exists a homeomorphism \(\psi_1 \in \Homeo(M_1, \partial
    M_1)\) such that the extension of \(\psi_1 \circ \phi_1\) to \(S_g\) by
    the identity acts trivially on \(H_1(S_g)\).

    Similarly, we choose \(\phi_2 \colon N_2 \to M_2\)
    and \(\psi_2 \in \Homeo(M_2, \partial M_2)\) so that \(\psi_2 \circ
    \phi_2\) takes \((\alpha, \alpha')\) to \((\beta, \beta')\) and its
    extension to \(S_g\) by the identity acts trivially on \(H_1(S_g)\).

    Then \(\psi_1 \circ \phi_1 \cup \psi_2 \circ \phi_2 \in \Homeo(\Sigma,
    \partial \Sigma)\) is the desired homeomorphism, and the lemma follows.
\end{proof}

\begin{proof}[Proof of Proposition~\ref{prop:bp-relation}]
    By Lemma~\ref{lem:BP-splitting}, there exists \(f \in \I_g\) taking \((\alpha,
    \alpha')\) to \((\beta, \beta')\) and fixing each \(\delta_j\), \(j = 1, \dots, k-1\).
    Applying \(f_*\) to the abelian cycle, we obtain
    \[
    \A(T_{\alpha} T_{\alpha'}^{-1}, T_{\delta_1}, \dots,
    T_{\delta_{k-1}}) =
    f_{*}(\A(T_{\alpha} T_{\alpha'}^{-1}, T_{\delta_1}, \dots,
    T_{\delta_{k-1}})) = \A(T_{\beta} T_{\beta'}^{-1}, T_{\delta_1},
    \dots, T_{\delta_{k-1}})
    ,\]
    which proves the proposition.
\end{proof}

    \section{Main relations in \(H_k^{\mathrm{ab,sep}}(\I_g)\)}
\label{sec:relations-in-hk}

In this section, we prove two auxiliary relations in \(H_k(\I_g)\). Recall
that a surface homeomorphic to a sphere with three boundary components is
called a \emph{pair of pants}.

\begin{lemma} \label{lem:sum-relation-1}
    Consider separating simple closed curves \(\gamma_1, \gamma_2, \gamma_3\)
    and \(\delta_1, \dots, \delta_{k-1}\) on \(S_g\) such that:
    \begin{enumerate}
        \item the curves \(\gamma_1, \gamma_2, \gamma_3\) decompose \(S_g\)
        into a pair of pants and three subsurfaces \(\Sigma_1, \Sigma_2,
        \Sigma_3\), where \(\partial \Sigma_j = \gamma_j\) for \(j = 1, 2,
        3\); 
        \item \(\Sigma_2\) has genus \(1\);
        \item the curves \(\delta_1, \dots, \delta_{k-1}\) are pairwise
        disjoint, pairwise nonisotopic, and lie in \(\Sigma_1, \Sigma_2,
        \Sigma_3\).
    \end{enumerate}
    Then in \(H_k(\I_g)\) we have
    \[
    \A(T_{\gamma_1}, T_{\delta_1}, \dots, T_{\delta_{k-1}}) + 
    \A(T_{\gamma_3}, T_{\delta_1}, \dots, T_{\delta_{k-1}}) = 
    \A(T_{\gamma_2}, T_{\delta_1}, \dots, T_{\delta_{k-1}})
    .\] 
\end{lemma}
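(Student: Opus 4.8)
The plan is to realize the three curves $\gamma_1,\gamma_2,\gamma_3$ as part of a lantern configuration, so that the lantern relation in $\Mod(S_g)$ gives a relation among the corresponding Dehn twists, and then translate this into a relation among abelian cycles in $H_k(\I_g)$. Concretely, since $\gamma_1,\gamma_2,\gamma_3$ bound a pair of pants $P$ together with the three subsurfaces $\Sigma_1,\Sigma_2,\Sigma_3$, and since $\Sigma_2$ has genus $1$, I want to find an embedded $S_0^4 \hookrightarrow S_g$ whose four boundary curves include $\gamma_1$, $\gamma_3$, and two curves cutting off a once-punctured torus from $\Sigma_2$, and whose three interior curves $x,y,z$ are chosen so that one of them is $\gamma_2$ and the other two are separating curves bounding genus-$1$ pieces inside $\Sigma_2$. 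Using the lantern relation and the fact that a genus-$1$ bounding curve pair inside $\Sigma_2$ gives a relation of the form $T_{c_1}T_{c_2} = (T_{\gamma_2}) \cdot (\text{commutators in }\I_g)$ — here I would lean on the standard fact that $T_{a}T_{b}T_{a}^{-1}T_{b}^{-1}$ for a genus-$1$ piece together with appropriate boundary twists reduces the count — I can arrange that
\[
T_{\gamma_1} T_{\gamma_3} = T_{\gamma_2} \cdot w,
\]
where $w$ is a product of commutators of elements of $\I_g$, each of which is supported away from all of $\delta_1,\dots,\delta_{k-1}$, hence commutes with $T_{\delta_1},\dots,T_{\delta_{k-1}}$.

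Given such an identity, the conclusion follows formally from the properties of abelian cycles listed in the introduction. Indeed, $T_{\gamma_1},T_{\gamma_2},T_{\gamma_3}$ all commute with each other and with $T_{\delta_1},\dots,T_{\delta_{k-1}}$ (all these curves are disjoint), so all the relevant abelian cycles are defined; applying property (1) to the factorization $T_{\gamma_1}T_{\gamma_3} = T_{\gamma_2}\,w$ in the first slot gives
\[
\A(T_{\gamma_1}, T_{\delta_1}, \dots, T_{\delta_{k-1}}) + \A(T_{\gamma_3}, T_{\delta_1}, \dots, T_{\delta_{k-1}}) = \A(T_{\gamma_2}, T_{\delta_1}, \dots, T_{\delta_{k-1}}) + \A(w, T_{\delta_1}, \dots, T_{\delta_{k-1}}),
\]
and then $\A(w, T_{\delta_1}, \dots, T_{\delta_{k-1}}) = 0$ because $w$ is a product of commutators of elements each commuting with every $T_{\delta_j}$, using the vanishing consequence of property (1) recorded just before the definition of $H_k^{\mathrm{ab,sep}}$, together with property (1) to split the product $w$. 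Care is needed that the elements realizing $w$ as commutators actually lie in $\I_g$ and not merely in $\Mod(S_g)$; this is where Corollary~\ref{cor:johnson3} (or a direct genus-$1$ argument) is invoked, and it requires the genus of the relevant piece of $\Sigma_2$ — or an enlargement of it — to be large enough, which is why the hypothesis $g\geq 3$ and the genus-$1$ assumption on $\Sigma_2$ enter.

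The main obstacle is setting up the lantern embedding correctly and identifying the "error term" $w$ explicitly as a product of $\I_g$-commutators supported away from the $\delta_j$. The subtlety is that the lantern relation $T_{b_1}T_{b_2}T_{b_3}T_{b_4} = T_xT_yT_z$ naturally produces four boundary twists, not two, so I must choose the embedded $S_0^4$ so that two of its boundary curves are isotopic to curves bounding genus-$1$ subsurfaces inside $\Sigma_2$ (allowing their twists to be absorbed, via $2[T_{c}]=0$-type arguments lifted to commutators using Proposition~\ref{prop:johnson3}/Corollary~\ref{cor:johnson3}), while the remaining two are $\gamma_1$ and $\gamma_3$, and among $x,y,z$ exactly one is $\gamma_2$ and the other two are again genus-$1$ separating curves in $\Sigma_2$ whose twists pair off. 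A clean way to organize this is to first reduce to the case $k=1$ (no $\delta_j$'s) by the observation that everything can be done inside a fixed subsurface disjoint from all the $\delta_j$, and then to do the pants-and-lantern bookkeeping on that subsurface; once the group-level identity $T_{\gamma_1}T_{\gamma_3} = T_{\gamma_2}\cdot(\text{commutators in }\I_g)$ is established, the homological conclusion is immediate.
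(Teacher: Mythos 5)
Your overall strategy (embed a lantern, push the resulting twist relation into abelian cycles) matches the paper's starting point, but the key step --- identifying and killing the ``error term'' --- is where the proposal breaks down. First, the configuration you describe does not exist: \(\Sigma_2\) has genus exactly \(1\), so the only essential separating simple closed curve it contains is isotopic to \(\gamma_2\); there are no ``separating curves bounding genus-\(1\) pieces inside \(\Sigma_2\)'' to serve as the other two interior curves of the lantern, nor two nonisotopic curves ``cutting off a once-punctured torus from \(\Sigma_2\)'' to serve as boundary curves (and the interior curves of a lantern pairwise intersect, so they could not in any case be disjoint separating curves in a genus-one piece). In any lantern realizing \(\gamma_1,\gamma_3\) as boundary curves and \(\gamma_2\) as an interior curve inside the genus-one subsurface \(P\cup\Sigma_2\), the remaining curves are necessarily nonseparating (they run through the handle of \(\Sigma_2\)): the relation one actually obtains is \(T_{\varepsilon}^{2}T_{\gamma_1}T_{\gamma_3}=T_{\gamma_2}T_{\varepsilon'}T_{\varepsilon''}\) with \(\varepsilon,\varepsilon',\varepsilon''\) nonseparating and homologous, so the extra twists are not individually in \(\I_g\) and cannot be ``absorbed'' one at a time.

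Second, your claimed identity \(T_{\gamma_1}T_{\gamma_3}=T_{\gamma_2}\,w\), with \(w\) a product of commutators of elements of \(\I_g\) each commuting with every \(T_{\delta_j}\), is unsubstantiated and cannot be produced by the tools you cite: the only region guaranteed to be disjoint from all the \(\delta_j\) is (a neighborhood of) \(P\cup\Sigma_2\), which has genus \(1\), whereas Proposition~\ref{prop:johnson3} and Corollary~\ref{cor:johnson3} require a subsurface of genus at least \(3\); since \(\Sigma_1\) and \(\Sigma_3\) may be filled with the curves \(\delta_j\), there is no room to run a Johnson-type commutator argument. The paper's actual mechanism is different: it groups the nonseparating twists into bounding pair maps \(T_{\varepsilon'}T_{\varepsilon}^{-1}\) and \(T_{\varepsilon}T_{\varepsilon''}^{-1}\), which do lie in \(\I_g\), so that additivity gives \(\A(T_{\gamma_1},\dots)+\A(T_{\gamma_3},\dots)=\A(T_{\gamma_2},\dots)+\A(T_{\varepsilon'}T_{\varepsilon}^{-1},\dots)-\A(T_{\varepsilon}T_{\varepsilon''}^{-1},\dots)\), and then cancels the last two terms not via commutators but via the BP-relation (Proposition~\ref{prop:bp-relation}): since \(\HH_{\varepsilon,\varepsilon''}=\HH_{\varepsilon',\varepsilon}\), there is \(f\in\I_g\) fixing each \(\delta_j\) and carrying one bounding pair to the other, and \(f_*\) acts trivially on \(H_k(\I_g)\). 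This change-of-coordinates step is the missing idea in your proposal; without it (or a genuine substitute) the error term does not vanish, so the proof as sketched has a real gap.
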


\begin{remark}
    In Lemmas~\ref{lem:sum-relation-1} and~\ref{lem:sum-relation-2}, some of
    the curves \(\delta_1, \dots, \delta_{k-1}\) may be isotopic to the curves
    \(\gamma_1, \gamma_2, \gamma_3\).
\end{remark}

\begin{proof}
    We first consider the case in which none of the curves \(\delta_1, \dots,
    \delta_{k-1}\) is isotopic to \(\gamma_2\).
    
    Let \(\varepsilon, \varepsilon', \varepsilon''\) be curves on \(S_g\),
    arranged as shown in Figure~\ref{fig:sum-relation-1}. Applying the lantern
    relation, we obtain
    \begin{align} \label{eq:lantern}
    T_{\varepsilon}^2 T_{\gamma_1} T_{\gamma_3} = T_{\gamma_2}
    T_{\varepsilon'} T_{\varepsilon''}
    .\end{align}
    It follows from Fact~\ref{ft:lin} that
    \begin{equation} \label{eq:sum-rel}
    \begin{aligned}
        \A(T_{\gamma_1}, T_{\delta_1}, \dots, T_{\delta_{k-1}}) + 
        \A(T_{\gamma_3}, T_{\delta_1}, \dots, T_{\delta_{k-1}}) &= 
        \A(T_{\gamma_1} T_{\gamma_3}, T_{\delta_1}, \dots, T_{\delta_{k-1}})  \\
        &= \A(T_{\gamma_2} T_{\varepsilon'} T_{\varepsilon''} T_{\varepsilon}^{-2},
        T_{\delta_1}, \dots, T_{\delta_{k-1}}) \\
        &= \A(T_{\gamma_2}, T_{\delta_1}, \dots, T_{\delta_{k-1}}) \\
            & \qquad \qquad + 
            \A(T_{\varepsilon'} T_{\varepsilon}^{-1}, T_{\delta_1}, \dots,
            T_{\delta_{k-1}}) \\ 
            & \qquad \qquad -
            \A(T_{\varepsilon} T_{\varepsilon''}^{-1}, T_{\delta_1}, \dots,
            T_{\delta_{k-1}}).
    \end{aligned}
    \end{equation}

    \begin{figure}[H]
        \centering
    \def\svgwidth{.5\columnwidth}
    \import{./figures/}{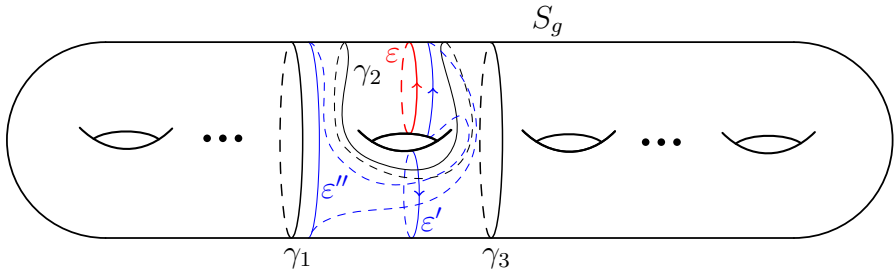}

        \caption{The curves \(\varepsilon, \varepsilon', \varepsilon''\) on
        \(S_g\).}
        \label{fig:sum-relation-1}
    \end{figure}
    
    We have \(\HH_{\varepsilon, \varepsilon''} = \HH_{\varepsilon',
    \varepsilon} = (\HH_1, \HH_2)\), where 
    \[
    \HH_1 = H_1(\Sigma_3) \oplus \langle x \rangle, \qquad \HH_2 = H_1(\Sigma_1) \oplus \langle x \rangle
    ,\] 
    and \(x = [\varepsilon] = [\varepsilon'] = [\varepsilon'']\). It then
    follows from the BP-relation (Proposition~\ref{prop:bp-relation}) that
    \begin{align*} 
    \A(T_{\varepsilon'} T_{\varepsilon}^{-1}, T_{\delta_1}, \dots, T_{\delta_{k-1}}) =
    \A(T_{\varepsilon} T_{\varepsilon''}^{-1}, T_{\delta_1}, \dots, T_{\delta_{k-1}})
    .\end{align*}
    The lemma now follows from \eqref{eq:sum-rel}.

    Now we turn to the case in which some of \(\delta_1, \dots, \delta_{k-1}\)
    are isotopic to \(\gamma_2\). If at least two curves \(\delta_i\) and
    \(\delta_j\) are isotopic to \(\gamma_2\), then the statement of the lemma
    is trivial. Thus, without loss of generality, we assume that \(\delta_1\)
    is isotopic to \(\gamma_2\) and that \(\delta_2, \dots, \delta_{k-1} \in
    \Sigma_1 \cup \Sigma_3\). In this case, the desired relation takes the
    form:
    \[
    \A(T_{\gamma_1}, T_{\gamma_2}, T_{\delta_2}, \dots, T_{\delta_{k-1}}) + 
    \A(T_{\gamma_3}, T_{\gamma_2}, T_{\delta_2}, \dots, T_{\delta_{k-1}}) = 0
    .\]
    Applying the lantern relation \eqref{eq:lantern}, it suffices
    to show that 
    \[
    \A(T_{\gamma_1}, T_{\varepsilon}^2 T_{\gamma_1} T_{\gamma_3}
    T_{\varepsilon''}^{-1} T_{\varepsilon'}^{-1}, T_{\delta_2}, \dots, T_{\delta_{k-1}}) + 
    \A(T_{\gamma_3}, T_{\varepsilon}^2 T_{\gamma_1} T_{\gamma_3}
    T_{\varepsilon''}^{-1} T_{\varepsilon'}^{-1}, T_{\delta_2}, \dots,
    T_{\delta_{k-1}}) = 0
    .\]
    This is equivalent to
    \[
    \A(T_{\gamma_1}, T_{\varepsilon}^2 T_{\varepsilon''}^{-1} T_{\varepsilon'}^{-1}, T_{\delta_2}, \dots, T_{\delta_{k-1}}) + 
    \A(T_{\gamma_3}, T_{\varepsilon}^2 T_{\varepsilon''}^{-1} T_{\varepsilon'}^{-1}, T_{\delta_2}, \dots,
    T_{\delta_{k-1}}) = 0
    .\]
    By the BP-relation, each summand vanishes, and the lemma follows.
\end{proof}

We now consider the following two important consequences of
Lemma~\ref{lem:sum-relation-1}.

\begin{proposition} \label{prop:ab-cyc-ord-gen-3}
    Let \(\delta_1\) and \(\delta_2\) be disjoint, nonisotopic, separating
    simple closed curves on \(S_3\). Then in \(H_2(\I_3)\) we have
    \[
    2 \A(T_{\delta_1}, T_{\delta_2}) = 0
    .\]
\end{proposition}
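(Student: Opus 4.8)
The plan is to reduce the genus-$3$ case to a configuration where Lemma~\ref{lem:sum-relation-1} applies, and then exploit the fact that on $S_3$ every separating curve has genus $1$. Given disjoint, nonisotopic separating curves $\delta_1, \delta_2$ on $S_3$, each bounds a genus-$1$ subsurface on one side, so they together cut $S_3$ into three pieces: a genus-$1$ surface $\Sigma_1$ bounded by $\delta_1$, a genus-$1$ surface $\Sigma_2$ bounded by $\delta_2$, and a middle piece $\Sigma_0$ with $\partial \Sigma_0 = \delta_1 \cup \delta_2$, which is necessarily a genus-$1$ surface with two boundary components. (Here I am using that $\delta_1, \delta_2$ are disjoint and nonisotopic, so neither is nullhomotopic inside the side bounded by the other.)

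First I would introduce, inside the middle piece $\Sigma_0$, a separating curve $\gamma$ cutting $\Sigma_0$ into a genus-$1$ subsurface $\Sigma_2'$ and a pair of pants bounded by $\delta_1$, $\delta_2$, $\gamma$; thus $\gamma$ is isotopic to neither $\delta_1$ nor $\delta_2$, it is a genus-$1$ separating curve, and it is disjoint from both. Now apply Lemma~\ref{lem:sum-relation-1} with $\gamma_1 = \delta_1$, $\gamma_3 = \delta_2$, $\gamma_2 = \gamma$ (the three curves bounding the pair of pants), noting that $\Sigma_2$ — the genus-$1$ piece on the $\gamma$ side — plays the role required in hypothesis (2), and there are no $\delta$'s ($k = 2$, so $k - 1 = 1$; I would apply the $k=2$ instance, taking the single auxiliary curve to be, say, $\delta_2$ itself, which is permitted by the Remark after Lemma~\ref{lem:sum-relation-1}). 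Wait — for $k=2$ we need one curve $\delta_1$ in the statement of the lemma; I would instead run the argument with $k = 2$ and the single curve $\delta_1$ of the lemma equal to $\delta_2$. This gives
\[
\A(T_{\delta_1}, T_{\delta_2}) + \A(T_{\delta_2}, T_{\delta_2}) = \A(T_\gamma, T_{\delta_2}).
\]
Since $\A(T_{\delta_2}, T_{\delta_2}) = 0$ (an abelian cycle with a repeated entry vanishes, by the antisymmetry property~(2)), we get $\A(T_{\delta_1}, T_{\delta_2}) = \A(T_\gamma, T_{\delta_2})$.

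The main point is now to run the same trick a second time in the other piece: apply Lemma~\ref{lem:sum-relation-1} again, this time with a pair of pants bounded by $\delta_1$, $\delta_2$, $\gamma$ but now isolating the genus-$1$ piece $\Sigma_1$ bounded by $\delta_1$, so that $\gamma_2 = \delta_1$, $\{\gamma_1, \gamma_3\} = \{\gamma, \delta_2\}$, and the genus-$1$ hypothesis is satisfied by $\Sigma_1$. Taking the auxiliary curve of the lemma to be $\delta_2$ again and using $\A(T_{\delta_2}, T_{\delta_2}) = 0$, this yields $\A(T_\gamma, T_{\delta_2}) = \A(T_{\delta_1}, T_{\delta_2}) + \A(T_{\delta_2},T_{\delta_2})$, which is just the same identity and gives nothing new. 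Instead the correct second step is to set up the pair of pants with a fresh curve: choose $\gamma$ so that the pair of pants has boundary $\gamma$, $\delta_1$, $\delta_2$ as before, but now apply the lemma in the form that reads $\A(T_\gamma) = \A(T_{\delta_1}) + \A(T_{\delta_2})$ against a \emph{different} auxiliary curve, and also apply it with the roles permuted to obtain $\A(T_{\delta_2},T_{\delta_1}) = \A(T_\gamma, T_{\delta_1})$. Adding the two identities $\A(T_{\delta_1},T_{\delta_2}) = \A(T_\gamma,T_{\delta_2})$ and $\A(T_{\delta_2},T_{\delta_1}) = \A(T_\gamma,T_{\delta_1})$ and using antisymmetry on the left, the left-hand side becomes $\A(T_{\delta_1},T_{\delta_2}) - \A(T_{\delta_1},T_{\delta_2}) = 0$, so $\A(T_\gamma, T_{\delta_2}) + \A(T_\gamma, T_{\delta_1}) = 0$, i.e. $\A(T_\gamma, T_{\delta_1} T_{\delta_2}) = 0$; combined with $\A(T_{\delta_1},T_{\delta_2}) = \A(T_\gamma,T_{\delta_2}) = -\A(T_\gamma,T_{\delta_1})$, we conclude $2\A(T_{\delta_1},T_{\delta_2}) = \A(T_\gamma,T_{\delta_2}) - \A(T_\gamma,T_{\delta_1}) = 2\A(T_\gamma, T_{\delta_2})$, which still does not obviously vanish.

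The honest hard part is exactly this bookkeeping: Lemma~\ref{lem:sum-relation-1} alone produces relations \emph{among} the three abelian cycles of a pair-of-pants triple but does not immediately force any single one to be $2$-torsion, so one genuinely needs a second, independent input. The natural one is to iterate: on $S_3$ one can find three pairwise disjoint genus-$1$ separating curves $\delta_1, \delta_2, \gamma$ arranged so that Lemma~\ref{lem:sum-relation-1} applies to the triple, \emph{and} also a fourth genus-$1$ separating curve $\gamma'$ disjoint from $\delta_1$ only, giving a second pair-of-pants triple $(\delta_1, \gamma, \gamma')$; chasing the resulting system of relations, together with the symmetry $f_*$ from Lemma~\ref{lem:BP-splitting}(1) (any two genus-$1$ separating curves bounding homeomorphic complements are related by an element of $\I_3$, so all $\A(T_\gamma, T_{\delta})$ with $\gamma,\delta$ a "standard" disjoint genus-$1$ pair are equal), collapses everything to a single generator $c = \A(T_{\delta_1}, T_{\delta_2})$ satisfying $c = -c$. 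In other words: combine the lantern-derived sum relation with the mapping-class-group symmetry to show all such abelian cycles coincide with $\pm$ a single class $c$, then feed antisymmetry into a pair-of-pants relation to get $c + c = c$, hence $c = 0$ — wait, that would kill $c$ entirely, contradicting Brendle--Farb; the correct conclusion from such a relation is $2c = 0$. I would therefore organize the final step as: exhibit curves so that Lemma~\ref{lem:sum-relation-1} gives $\A(T_{\gamma_1},\cdot) + \A(T_{\gamma_3},\cdot) = \A(T_{\gamma_2},\cdot)$ with \emph{all three} of $\gamma_1,\gamma_2,\gamma_3$ carried to one another by elements of $\I_3$ (possible on $S_3$ because all three complementary pieces can be made homeomorphic), so that the common value $v$ of the three terms satisfies $v + v = v$ after a sign flip coming from reordering the auxiliary curve, i.e. $2v = 0$, and this $v$ is $\A(T_{\delta_1}, T_{\delta_2})$ after identifying curves via $\I_3$. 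The delicate verification — and the step I expect to be the real obstacle — is checking that on $S_3$ one can realize a pair-of-pants decomposition all of whose three side-surfaces are genus-$1$ and mutually $\I_3$-equivalent, while keeping track of orientations so that the symmetry element lands in $\I_3$ rather than merely in $\Mod(S_3)$.
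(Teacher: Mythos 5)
Your first step is fine and is exactly the paper's setup: choosing $\gamma$ in the middle piece so that $\delta_1 \cup \delta_2 \cup \gamma$ bounds a pair of pants, Lemma~\ref{lem:sum-relation-1} applied with $\gamma_2 = \gamma$ and auxiliary curve $\delta_2$ gives $\A(T_{\delta_1}, T_{\delta_2}) = \A(T_{\gamma}, T_{\delta_2})$. The gap is in the second step. You assert that Lemma~\ref{lem:sum-relation-1} alone ``does not immediately force any single one to be $2$-torsion'' and go hunting for an extra input, but you never try the remaining choice of the distinguished curve: on $S_3$ \emph{all three} complementary pieces of $\delta_1 \cup \delta_2 \cup \gamma$ have genus $1$, so the lemma also applies with $\gamma_2 = \delta_2$ (auxiliary curve still $\delta_2$, which the remark after the lemma permits), yielding
\[
\A(T_{\delta_1}, T_{\delta_2}) + \A(T_{\gamma}, T_{\delta_2}) = \A(T_{\delta_2}, T_{\delta_2}) .
\]
Adding this to your first identity gives $2\A(T_{\delta_1}, T_{\delta_2}) = 0$ immediately (the $\A(T_{\delta_2},T_{\delta_2})$ terms cancel, so you do not even need them to vanish; and incidentally their vanishing follows not from antisymmetry, which only gives $2$-torsion, but from the fact that the defining map $\Z^2 \to \I_3$ factors through $\Z$). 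This is precisely the paper's proof; no second, independent ingredient is needed.

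The alternative completion you sketch is not salvageable as stated. Its key claim --- that any two genus-$1$ separating curves with homeomorphic complements are related by an element of $\I_3$, so that the three boundary curves of the pair of pants can be carried to one another inside the Torelli group --- is false: an element acting trivially on $H_1(S_3)$ preserves the symplectic splitting determined by a separating curve, so it can never take one of two disjoint, nonisotopic separating curves to the other, since they bound distinct subgroups of $H_1(S_3)$. This is exactly why Lemma~\ref{lem:BP-splitting} demands the homological condition $\HH_{\alpha,\alpha'} = \HH_{\beta,\beta'}$. Moreover, even granting such a symmetry, your own bookkeeping produces $v + v = v$, i.e.\ $v = 0$, which you correctly observe would contradict Brendle--Farb; the ``sign flip'' you invoke to turn this into $2v = 0$ is never substantiated. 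So the proposal has a genuine gap, though it sits one short, correct application of Lemma~\ref{lem:sum-relation-1} away from the paper's argument.
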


\begin{proof}
    Choose a simple closed curve \(\delta_3\) on \(S_3\) such that \(\delta_1
    \cup \delta_2 \cup \delta_3\) bounds a pair of pants. Applying
    Lemma~\ref{lem:sum-relation-1} first to \(\delta_1, \delta_2, \delta_3\)
    and then to \(\delta_1, \delta_3, \delta_2\), we obtain
    \begin{align*}
        \A(T_{\delta_1}, T_{\delta_2}) + \A(T_{\delta_3}, T_{\delta_2}) &=
        \A(T_{\delta_2}, T_{\delta_2}), \\
        \A(T_{\delta_1}, T_{\delta_2}) + \A(T_{\delta_2}, T_{\delta_2}) &=
        \A(T_{\delta_3}, T_{\delta_2})
    .\end{align*}
    Summing these two relations yields the desired equality.
\end{proof}

\begin{remark}
We have the following splitting:
\[
H_{*}(\I_g; \Z[1/2]) =
H_{*}(\I_g; \Z[1/2])^{+} \oplus
H_{*}(\I_g; \Z[1/2])^{-},
\]
where \(H_{*}(\I_g; \Z[1/2])^{\pm}\) are the
eigenspaces of the action of the hyperelliptic involution.

Hain~\cite[Corollary~18]{hain2002} proved that \(H_2(\I_3; \Z[1/2])^{+} = 0\).
Moreover, the images of all abelian cycles \(\A(T_{\delta_1}, T_{\delta_2})\),
where \(\delta_1\) and \(\delta_2\) are disjoint separating simple closed curves,
lie in \(H_2(\I_3; \Z[1/2])^{+}\).
It follows that, in \(H_2(\I_3)\), each abelian cycle
\(\A(T_{\delta_1}, T_{\delta_2})\) has order \(2^{\ell}\) for some \(\ell \in \N\).
\end{remark}

From Corollary~\ref{cor:z2-vector-space} and
Proposition~\ref{prop:ab-cyc-ord-gen-3} it follows that
\(H_2^{\mathrm{ab,sep}}(\I_g)\) is a \(\Z/2\Z\)-vector space for \(g \geq
3\).

\begin{proposition} \label{prop:ab-sep-gen}
    For \(g \geq 3\), the \(\Z/2\Z\)-vector space
    \(H_2^{\mathrm{ab,sep}}(\I_g)\) is generated by abelian cycles
    \(\A(T_{\delta_1}, T_{\delta_2})\), where \(\delta_1\) and \(\delta_2\)
    are disjoint, nonisotopic, separating simple closed curves of genus \(1\)
    on \(S_g\).
\end{proposition}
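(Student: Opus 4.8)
By definition $H_2^{\mathrm{ab,sep}}(\I_g)$ is generated by the abelian cycles $\A(T_{\delta_1},T_{\delta_2})$ with $\delta_1,\delta_2$ disjoint, nonisotopic, separating simple closed curves, and it is a $\Z/2\Z$-vector space for $g\ge 3$ by Corollary~\ref{cor:z2-vector-space} and Proposition~\ref{prop:ab-cyc-ord-gen-3}. Every separating curve on $S_3$ has genus $1$, so the case $g=3$ is immediate; assume $g\ge 4$. I would prove the proposition by induction on the integer $g(\delta_1)+g(\delta_2)$. Since separating curves have genus at least $1$, this integer is at least $2$, with equality precisely when both curves have genus $1$; that is the base case, where there is nothing to prove. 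For the inductive step, using that $\A(T_{\delta_1},T_{\delta_2})$ and $\A(T_{\delta_2},T_{\delta_1})$ span the same line, we may assume $g(\delta_1)=h\ge 2$.

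The reduction step is an application of Lemma~\ref{lem:sum-relation-1} with $k=2$ and the single auxiliary curve $\delta_2$. Let $A,B$ be the two subsurfaces cut off by $\delta_1$, labelled so that $g(A)=h\le g-h=g(B)$. The goal is to exhibit a decomposition $S_g=P\cup\Sigma_1\cup\Sigma_2\cup\Sigma_3$ with $P$ a pair of pants, $\partial\Sigma_j=\gamma_j$, $\gamma_1=\delta_1$, $\Sigma_1=B$ (so that $P\cup\Sigma_2\cup\Sigma_3=A$ and $P$ is adjacent to $\delta_1$), $g(\Sigma_2)=1$, and $g(\Sigma_3)=h-1$, chosen compatibly with $\delta_2$. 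Granting this, an Euler-characteristic count gives $g(\gamma_2)=1$ and $g(\gamma_3)=\min(h-1,\,g-h+1)=h-1$, where the last equality uses $g(A)\le g(B)$, and Lemma~\ref{lem:sum-relation-1} yields
\[
\A(T_{\delta_1},T_{\delta_2})=\A(T_{\gamma_2},T_{\delta_2})-\A(T_{\gamma_3},T_{\delta_2}),
\]
which is a sum of the two cycles on the right since we work in a $\Z/2\Z$-vector space. Both $\gamma_2$ and $\gamma_3$ are essential (their genus is at least $1$) separating curves disjoint from $\delta_2$; each term is either $0$ — if that curve happens to be isotopic to $\delta_2$ — or again an abelian cycle of the allowed form, now with strictly smaller value of $g(\cdot)+g(\delta_2)$ (namely $1+g(\delta_2)$ and $(h-1)+g(\delta_2)$, both less than $h+g(\delta_2)$). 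The inductive hypothesis finishes the step, and the induction terminates at a sum of cycles $\A(T_{\delta_1'},T_{\delta_2'})$ with $\delta_1',\delta_2'$ disjoint, nonisotopic, separating curves of genus $1$.

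The heart of the argument — and the step I expect to be the main obstacle — is constructing the decomposition above so that it is compatible with $\delta_2$, meaning $\delta_2$ is disjoint from $\gamma_1,\gamma_2,\gamma_3$ and isotopic into one of $\Sigma_1,\Sigma_2,\Sigma_3$ (this is exactly the hypothesis needed to invoke Lemma~\ref{lem:sum-relation-1}). When $\delta_2\subset B$ this is automatic with $\delta_2\subset\Sigma_1$, and $\gamma_2,\gamma_3\subset A$ are then trivially disjoint from $\delta_2$. When $\delta_2\subset A$, the curve $\delta_2$ cuts $A$ into a piece $A'$ with $\delta_1\subset\partial A'$ and a piece $A''$ with $\partial A''=\delta_2$; because $\delta_1\not\simeq\delta_2$ the surface $A'$ is not an annulus, and because $\delta_2$ is essential $A''$ is not a disk, so $1\le g(A')\le h-1$. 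Using $g(A')\ge 1$ one chips a genus-$1$ handle $\Sigma_2$ (with $\gamma_2=\partial\Sigma_2$) and a pair of pants $P$ with boundary $\delta_1,\gamma_2,\gamma_3$ off of $A'$, arranged so that $A'\setminus(P\cup\Sigma_2)$ is a surface of genus $g(A')-1$ with boundary $\gamma_3\sqcup\delta_2$; gluing $A''$ back along $\delta_2$ gives $\Sigma_3$, of genus $(g(A')-1)+g(A'')=h-1$, with $\delta_2$ in its interior. In the degenerate subcase $g(A')=1$ the surface $A'\setminus\Sigma_2$ is itself the pants $P$, so $\gamma_3$ is isotopic to $\delta_2$ and the second term of the displayed relation simply vanishes.

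Carrying out these surface constructions and the attendant Euler-characteristic bookkeeping — in particular re-deriving $g(\gamma_2)=1$ and $g(\gamma_3)=h-1$ in every case, and checking that none of $P,\Sigma_1,\Sigma_2,\Sigma_3$ is an annulus so that $\gamma_1,\gamma_2,\gamma_3$ genuinely bound the required subsurfaces — is the routine but slightly delicate part of the proof. Everything else is bookkeeping of the induction.
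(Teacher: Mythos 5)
Your proof is correct and follows essentially the same route as the paper: both repeatedly apply Lemma~\ref{lem:sum-relation-1} to a pants-plus-genus-one decomposition to split off genus-\(1\) separating curves, using the \(\Z/2\Z\)-structure from Corollary~\ref{cor:z2-vector-space} and Proposition~\ref{prop:ab-cyc-ord-gen-3} to drop signs. The only organizational difference is that the paper peels the genus-one handles off the subsurface bounded by \(\delta_1\) that does \emph{not} contain \(\delta_2\) (so disjointness from \(\delta_2\) is automatic and your compatibility analysis for the case \(\delta_2\subset A\) is not needed), whereas you peel from the side realizing \(g(\delta_1)\) and organize the reduction as an induction on \(g(\delta_1)+g(\delta_2)\).
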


\begin{proof}
    We show that for nonisotopic separating simple closed curves \(\delta_1\)
    and \(\delta_2\) on \(S_g\), the abelian cycle \(\A(T_{\delta_1},
    T_{\delta_2})\) can be expressed as a sum of abelian cycles
    \(\A(T_{\gamma_i}, T_{\theta_j})\), where \(\gamma_i\) and \(\theta_j\)
    are disjoint, nonisotopic, separating simple closed curves of genus \(1\)
    for all \(i\) and \(j\).

    Suppose that the curves \(\delta_1\) and \(\delta_2\) bound subsurfaces
    \(\Sigma_1, \Sigma_2, \Sigma_3\) of \(S_g\) such that \(\partial \Sigma_1
    = \delta_1\) and \(\partial \Sigma_2 = \delta_2\). Choose separating
    simple closed curves \(\gamma_1\) and \(\eta_1\) on \(\Sigma_1\) (see
    Figure~\ref{fig:gen-1-generation}) such that:
    \begin{itemize}
        \item \(\gamma_1\) has genus \(1\); and
        \item \(\gamma_1 \cup \eta_1 \cup \delta_1\) bounds a pair of pants.
    \end{itemize}
    By Lemma~\ref{lem:sum-relation-1} we have 
    \[
    \A(T_{\delta_1}, T_{\delta_2}) = \A(T_{\gamma_1}, T_{\delta_2}) +
    \A(T_{\eta_1}, T_{\delta_2})
    .\] 

    \begin{figure}[H]
        \centering
    \def\svgwidth{.6\columnwidth}
    \import{./figures/}{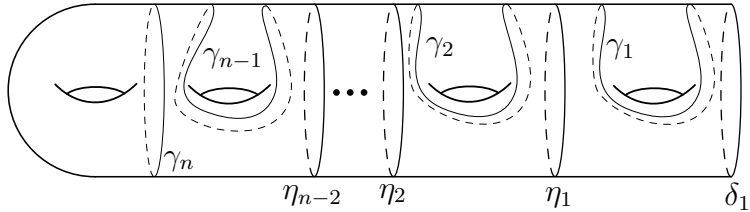}

        \caption{The surface \(\Sigma_1\) and the curves on \(\Sigma_1\).}
        \label{fig:gen-1-generation}
    \end{figure}

    Continuing similarly with \(\A(T_{\eta_1}, T_{\delta_2})\) and repeating
    this process, we obtain a collection of separating simple closed curves
    \(\gamma_1, \dots, \gamma_n\) of genus \(1\) on \(\Sigma_1\) such that
    \[
    \A(T_{\delta_1}, T_{\delta_2}) = 
    \A(T_{\gamma_1}, T_{\delta_2}) + \cdots + \A(T_{\gamma_n}, T_{\delta_2})
    .\] 
    Applying the same argument to each abelian cycle \(\A(T_{\gamma_j},
    T_{\delta_2})\), we obtain a collection of separating simple closed curves
    \(\theta_1, \dots, \theta_m\) of genus \(1\) on \(\Sigma_2\) such that
    \[
    \A(T_{\gamma_j}, T_{\delta_2}) = \A(T_{\gamma_j}, T_{\theta_1}) + \cdots +
    \A(T_{\gamma_j}, T_{\theta_m}) 
    .\] 
    Hence, we obtain the following decomposition, which proves the
    proposition:
    \begin{align*}
        \A(T_{\delta_1}, T_{\delta_2}) = \sum_{i=1}^{n} \A(T_{\gamma_i},
        T_{\delta_2}) = \sum_{i=1}^{n} \sum_{j=1}^{m}  \A(T_{\gamma_i},
        T_{\theta_j})
    .\end{align*}
\end{proof}

We now state the second auxiliary relation in \(H_k(\I_g)\).

\begin{lemma} \label{lem:sum-relation-2}
    Consider separating simple closed curves \(\gamma_1, \gamma_2, \gamma_3\)
    and \(\delta_1, \dots, \delta_{k-1}\) on \(S_g\) such that:
    \begin{enumerate}
        \item the curves \(\gamma_1, \gamma_2, \gamma_3\) decompose \(S_g\)
        into a pair of pants and three subsurfaces \(\Sigma_1, \Sigma_2,
        \Sigma_3\), where \(\partial \Sigma_j = \gamma_j\) for \(j = 1, 2,
        3\);
        \item the curves \(\delta_1, \dots, \delta_{k-1}\) are pairwise
        disjoint, pairwise nonisotopic, and lie in \(\Sigma_1, \Sigma_2,
        \Sigma_3\);

        \item there exists a (possibly nonessential) separating simple closed
        curve \(\theta\) such that \(\gamma_3 \cup \theta\) bounds a
        subsurface \(\Sigma\) of genus \(1\), and the curves
        \(\delta_1, \dots, \delta_{k-1}\) are disjoint from
        \(\Sigma\).
    \end{enumerate}
    Then in \(H_k(\I_g)\) we have
    \[
    \A(T_{\gamma_1}, T_{\delta_1}, \dots, T_{\delta_{k-1}}) + 
    \A(T_{\gamma_2}, T_{\delta_1}, \dots, T_{\delta_{k-1}}) = 
    \A(T_{\gamma_3}, T_{\delta_1}, \dots, T_{\delta_{k-1}})
    .\] 
\end{lemma}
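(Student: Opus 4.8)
The plan is to reduce Lemma~\ref{lem:sum-relation-2} to Lemma~\ref{lem:sum-relation-1} by induction on the genus of $\Sigma_3$. The base case is $g(\Sigma_3) = 1$, which is precisely the situation where $\theta$ is nonessential: indeed, if $\theta$ bounds a disk then, since the genus-$1$ subsurface $\Sigma$ lies inside $\Sigma_3$, the disk must lie on the side of $\theta$ away from $\Sigma$, so $\Sigma_3$ is obtained from $\Sigma$ by capping $\theta$ and has genus $1$; conversely $g(\Sigma_3)=1$ forces $\Sigma_3\setminus\Sigma$ to be a disk. In this case the curves $\gamma_1,\gamma_3,\gamma_2$, listed in that order, satisfy the hypotheses of Lemma~\ref{lem:sum-relation-1} with $\gamma_3$ in the role of the genus-$1$ middle curve, so Lemma~\ref{lem:sum-relation-1} yields exactly
\[
\A(T_{\gamma_1}, T_{\delta_1}, \dots, T_{\delta_{k-1}}) + \A(T_{\gamma_2}, T_{\delta_1}, \dots, T_{\delta_{k-1}}) = \A(T_{\gamma_3}, T_{\delta_1}, \dots, T_{\delta_{k-1}}).
\]

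For the inductive step, suppose $g(\Sigma_3) = m \geq 2$, so the subsurface $\Sigma_3'$ of $\Sigma_3$ lying beyond $\theta$ has genus $m-1 \geq 1$. Inside $\Sigma$ choose a separating curve $\theta'$ so that $\gamma_3, \theta, \theta'$ bound a pair of pants $Q$ and $\theta'$ bounds a one-holed torus $\Sigma'' \subset \Sigma$; note that $\Sigma''$, lying in $\Sigma$, is disjoint from all the $\delta_i$. The key point is that $P \cup_{\gamma_3} Q$ is a four-holed sphere $R$ with boundary $\gamma_1, \gamma_2, \theta, \theta'$ in which $\gamma_3$ is one of the three curves of a lantern relation; the other two lantern curves $u$ (separating $\{\gamma_1,\theta\}$ from $\{\gamma_2,\theta'\}$) and $v$ (separating $\{\gamma_1,\theta'\}$ from $\{\gamma_2,\theta\}$) are again separating in $S_g$, and the lantern relation reads $T_{\gamma_1} T_{\gamma_2} T_{\theta} T_{\theta'} = T_{\gamma_3} T_u T_v$. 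I would then collect further relations among the abelian cycles of $T_{\gamma_1},T_{\gamma_2},T_{\gamma_3},T_\theta,T_{\theta'},T_u,T_v$: Lemma~\ref{lem:sum-relation-1} applied to $Q$ with $\Sigma''$ as the genus-$1$ middle piece; Lemma~\ref{lem:sum-relation-1} applied to the two sub-pants of $R$ cut off by $u$ and by $v$ (after regluing $\Sigma''$, which then plays the middle role); and the inductive hypothesis applied to the pair of pants $Q$ with $\theta$ in the role of the distinguished curve — this is legitimate because the subsurface bounded by $\theta$ on the $\Sigma_3'$ side has genus $m-1 < m$, and because $\theta \cup \gamma_3$ cobounds the genus-$1$ subsurface $\Sigma$, which is disjoint from all the $\delta_i$. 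Combining these with the lantern relation and cancelling the bounding-pair terms produced by the lanterns via the BP-relation (Proposition~\ref{prop:bp-relation}), exactly as in the proof of Lemma~\ref{lem:sum-relation-1}, should give the desired equality.

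The main obstacle will be the bookkeeping required to eliminate the auxiliary abelian cycles $\A(T_\theta,\cdot)$, $\A(T_{\theta'},\cdot)$, $\A(T_u,\cdot)$, $\A(T_v,\cdot)$ and to arrange that the combination of the relations above produces the conclusion on the nose rather than up to $2$-torsion; here one also uses, as already established for $k=2$ in Proposition~\ref{prop:abelian-cycle-order} and Proposition~\ref{prop:ab-cyc-ord-gen-3}, that the abelian cycles in question have order at most $2$, carried along in the same induction. A secondary technical point is handling degenerate configurations: some $\delta_i$ may be isotopic to one of $\gamma_1,\gamma_2,\gamma_3$, in which case the corresponding abelian cycle vanishes by antisymmetry and the identity degenerates (and must be verified directly); and it may happen that the genus of $\Sigma_3$ cannot be reached from $\gamma_3$ without meeting a $\delta_i$ lying in $\Sigma_3'$, in which case one should first re-route by applying Lemma~\ref{lem:sum-relation-1} to a pair of pants inside $\Sigma_3$ so that the auxiliary genus-$1$ subsurfaces can be chosen disjoint from all the $\delta_i$.
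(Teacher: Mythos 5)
Your base case is fine, but the inductive step does not close. Write $\A(x)$ for $\A(T_x,T_{\delta_1},\dots,T_{\delta_{k-1}})$. The relations you actually have at your disposal are: the all-separating lantern in $R=P\cup_{\gamma_3}Q$, which gives $\A(\gamma_1)+\A(\gamma_2)+\A(\theta)+\A(\theta')=\A(\gamma_3)+\A(u)+\A(v)$; Lemma~\ref{lem:sum-relation-1} applied to $Q$ (middle piece $\Sigma''$), giving $\A(\gamma_3)+\A(\theta)=\A(\theta')$; and Lemma~\ref{lem:sum-relation-1} applied to the two sub-pants of $R$ containing $\theta'$, giving $\A(u)+\A(\gamma_2)=\A(\theta')$ and $\A(v)+\A(\gamma_1)=\A(\theta')$. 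Eliminating $\A(u),\A(v),\A(\theta),\A(\theta')$ from these yields precisely $2\bigl(\A(\gamma_1)+\A(\gamma_2)-\A(\gamma_3)\bigr)=0$ and nothing sharper; even if you adjoin the ``inductive'' relation $\A(\gamma_3)+\A(\theta')=\A(\theta)$, the whole system is satisfied in $\Z/2\Z$ by $\A(\gamma_1)=\A(v)=1$ and $\A(\gamma_2)=\A(\gamma_3)=\A(\theta)=\A(\theta')=\A(u)=0$, so the desired identity is not a formal consequence of the relations you list. Your proposed remedy--invoking that the cycles have order at most $2$--cannot repair this: once everything is $2$-torsion the doubled relation is vacuous, and for $k\ge 3$ the order-$2$ statement (Proposition~\ref{prop:abelian-cycle-order-higher}) is itself deduced from the present lemma via Lemma~\ref{lem:gen-2-relation}, so carrying it along would be circular. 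There is also a structural problem with the induction itself: your application of the inductive hypothesis to $Q$ with $\theta$ as distinguished curve takes the genus-$1$ certificate $\Sigma$ on the pair-of-pants side of $\theta$, whereas the statement whose base case you verified (and which matches the paper's intended reading, $\Sigma\subset\Sigma_3$) requires it on the far side; a genus-$1$ subsurface of $\Sigma_3'$ adjacent to $\theta$ and disjoint from the $\delta_i$ need not exist, and your suggested ``re-routing'' by Lemma~\ref{lem:sum-relation-1} is exactly the kind of manipulation that again produces only the doubled relation.

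The paper avoids all of this with a single, non-inductive lantern chosen so that its remaining four curves $\alpha,\alpha',\beta,\beta'$ are \emph{nonseparating}: the relation $T_{\alpha'}T_{\alpha}T_{\gamma_2}T_{\gamma_1}=T_{\gamma_3}T_{\beta}T_{\beta'}$ produces error terms that are bounding-pair cycles $\A(T_\beta T_\alpha^{-1},\dots)$ and $\A(T_{\alpha'}T_{\beta'}^{-1},\dots)$, and the genus-$1$ piece $\Sigma$ is used precisely to guarantee $\HH_{\beta,\alpha}=\HH_{\alpha',\beta'}$, so that Proposition~\ref{prop:bp-relation} cancels them exactly and the identity comes out on the nose. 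The idea you are missing is that the auxiliary terms must be BP-cycles, which the BP-relation kills with integral (not merely mod-$2$) precision; a lantern all of whose seven curves are separating leaves residual separating-twist cycles $\A(\theta),\A(\theta'),\A(u),\A(v)$ that the available tools can only cancel up to a factor of $2$.
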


\begin{remark}
    Note the difference in the order of \(\gamma_1, \gamma_2, \gamma_3\) in
    the relations of Lemmas~\ref{lem:sum-relation-1}
    and~\ref{lem:sum-relation-2}. This difference is the key idea that will be
    used in the proof of Lemma~\ref{lem:gen-2-relation} below.
\end{remark}

\begin{proof}
    Similar to the proof of Lemma~\ref{lem:sum-relation-1}, we first consider
    the case in which none of the curves \(\delta_1, \dots, \delta_{k-1}\) is
    isotopic to \(\gamma_3\).

    Let \(\alpha, \alpha', \beta, \beta'\) be curves on \(S_g\), arranged as
    shown in Figure~\ref{fig:sum-relation-2}. Applying the lantern relation,
    we obtain
    \[
    T_{\alpha'} T_{\alpha} T_{\gamma_2} T_{\gamma_1} = T_{\gamma_{3}}
    T_{\beta} T_{\beta'}
    .\] 

    \begin{figure}[H]
        \centering
    \def\svgwidth{.9\columnwidth}
    \import{./figures/}{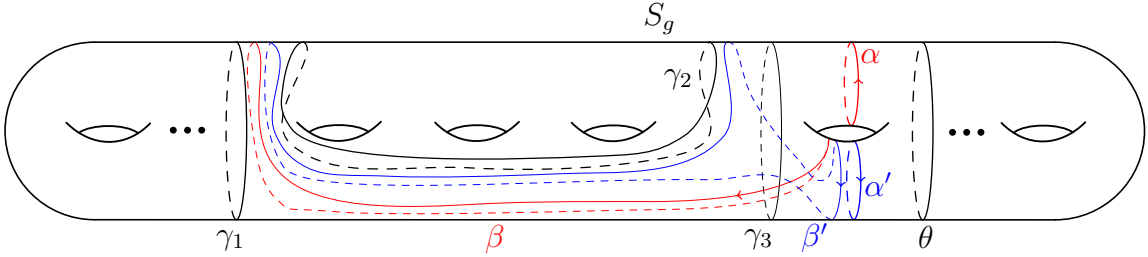}

        \caption{The curves \(\alpha, \alpha', \beta, \beta'\) on \(S_g\).}
        \label{fig:sum-relation-2}
    \end{figure}
    As in the proof of Lemma~\ref{lem:sum-relation-1}, we have
    \(\HH_{\beta, \alpha} = \HH_{\alpha', \beta'} = (\HH_1, \HH_2)\), where
    \[
    \HH_1 = H_1(\Sigma_1) \oplus H_1(\Sigma_{\theta}) \oplus \langle x
    \rangle, \qquad \HH_2 = H_1(\Sigma_2) \oplus \langle x \rangle
    .\]
    Here, \(x = [\alpha] = [\alpha'] = [\beta] = [\beta']\), and
    \(\Sigma_{\theta}\) is the subsurface bounded by \(\theta\) lying to the
    right of \(\theta\) in Figure~\ref{fig:sum-relation-2}.
    By the BP-relation, we then obtain

    \begin{align*}
        \A(T_{\gamma_1}, T_{\delta_1}, \dots, T_{\delta_{k-1}}) + \A(T_{\gamma_2}, T_{\delta_1}, \dots,
        T_{\delta_{k-1}}) &= 
        \A(T_{\gamma_1} T_{\gamma_2}, T_{\delta_1}, \dots, T_{\delta_{k-1}})
        \\
        &= 
        \A(T_{\gamma_3} T_{\beta} T_{\beta'} T_{\alpha}^{-1}
        T_{\alpha'}^{-1}, T_{\delta_1}, \dots, T_{\delta_{k-1}}) \\
        &= \A(T_{\gamma_{3}}, T_{\delta_1}, \dots,
        T_{\delta_{k-1}}) \\
        &\qquad  \qquad +
        \A(T_{\beta} T_{\alpha}^{-1}, T_{\delta_1}\dots, T_{\delta_{k-1}}) \\
        & \qquad \qquad - 
        \A(T_{\alpha'} T_{\beta'}^{-1}, T_{\delta_1}\dots, T_{\delta_{k-1}}) 
        \\
        &= \A(T_{\gamma_{3}}, T_{\delta_1}, \dots,
        T_{\delta_{k-1}})
    ,\end{align*}
    and the lemma follows.
    
    The case in which one of the curves \(\delta_1, \dots, \delta_{k-1}\) is
    isotopic to \(\gamma_3\) is treated in the same way as in the proof of
    Lemma~\ref{lem:sum-relation-1}.
\end{proof}

\begin{remark}
    For \(k = 2\), Lemmas~\ref{lem:sum-relation-1} and~\ref{lem:sum-relation-2}
    are special cases of the more general Proposition~\ref{prop:key-relation},
    which will be proved in Section~\ref{sec:BCJ}. 
\end{remark}

An important consequence of Lemmas~\ref{lem:sum-relation-1}
and~\ref{lem:sum-relation-2} is the following lemma, which will play a key
role in Section~\ref{sec:order-2-hk}.

\begin{lemma}\label{lem:gen-2-relation}
    Let \(\delta_1,\dots,\delta_k\) be pairwise disjoint, pairwise nonisotopic,
    separating simple closed curves on \(S_g\).
    Assume that one of the following holds:
    \begin{enumerate}
        \item there exists a subsurface \(\Sigma\) of genus at least \(2\)
        among the subsurfaces \(\Sigma_1, \dots, \Sigma_{k+1}\)
        obtained by cutting \(S_g\) along \(\delta_1, \dots, \delta_k\);
        
        \item there exists a curve \(\delta_j\) that bounds a subsurface
        \(\Sigma\) of genus at least \(2\) containing exactly one other curve
        \(\delta_i\), and all remaining curves \(\delta_\ell\) are disjoint
        from \(\Sigma\).
    \end{enumerate}
    Then we have
    \[
    2\A(T_{\delta_1}, \dots, T_{\delta_k}) = 0
    .\]
\end{lemma}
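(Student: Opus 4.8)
The proof rests on two mechanisms. The first is Corollary~\ref{cor:johnson3}: if one of the twist curves, say $\delta$, bounds a subsurface of genus at least $3$ disjoint from all the other $\delta_\ell$, then writing $T_\delta^2 = [h_1,f_1]\cdots[h_m,f_m]$ with all $h_i,f_i$ supported in that subsurface (so that they commute with every other $T_{\delta_\ell}$) gives $2\A(T_{\delta_1},\dots,T_{\delta_k}) = \sum_i \A(\dots,[h_i,f_i],\dots) = 0$. The second is the doubling trick from the proof of Proposition~\ref{prop:ab-cyc-ord-gen-3}: if, for a single pair-of-pants configuration $\gamma_1,\gamma_2,\gamma_3$ with a fixed family of auxiliary curves, one can apply Lemma~\ref{lem:sum-relation-1} with $\gamma_2$ as the distinguished (genus-$1$) middle curve, obtaining $\A(T_{\gamma_1})+\A(T_{\gamma_3}) = \A(T_{\gamma_2})$, and \emph{also} a relation $\A(T_{\gamma_1})+\A(T_{\gamma_2}) = \A(T_{\gamma_3})$ — coming either from Lemma~\ref{lem:sum-relation-1} with the roles of $\gamma_2,\gamma_3$ exchanged, or from Lemma~\ref{lem:sum-relation-2} with $\gamma_3$ distinguished — then adding the two relations yields $2\A(T_{\gamma_1},\dots) = 0$ (here I suppress the common auxiliary arguments). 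The goal is to produce such configurations with $\gamma_1$ isotopic to one of the given $\delta_j$ and the other $\delta_\ell$ serving as the auxiliary curves; the asymmetry between Lemmas~\ref{lem:sum-relation-1} and~\ref{lem:sum-relation-2} noted in the remark above is precisely what makes the second relation available when no complementary piece has genus exactly $1$.

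For case (1), let $\Sigma$ be a complementary subsurface of genus $\ge 2$ obtained by cutting along $\delta_1,\dots,\delta_k$, and induct on the number $r$ of its boundary components. If $r = 1$, with $\partial\Sigma = \delta_{j_0}$: when $g(\Sigma)\ge 3$ we are done by the first mechanism applied to $\delta_{j_0}$; when $g(\Sigma) = 2$ we cut $\Sigma$ into a pair of pants $P$ with $\partial P = \delta_{j_0}\cup\gamma_2\cup\gamma_3$ and two genus-$1$ pieces bounded by $\gamma_2$ and $\gamma_3$, take the auxiliary curves to be all $\delta_\ell$ with $\ell\ne j_0$ (which lie in the complement of $\Sigma$), and run the doubling trick with $\gamma_1 = \delta_{j_0}$, exactly as in Proposition~\ref{prop:ab-cyc-ord-gen-3}. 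If $r\ge 2$, pick two boundary components $\delta_{j_1},\delta_{j_2}$ of $\Sigma$ and a curve $\gamma\subset\Sigma$ with $\delta_{j_1}\cup\delta_{j_2}\cup\gamma$ bounding a pair of pants $P\subset\Sigma$; then $\delta_{j_1},\delta_{j_2},\gamma$ decompose $S_g$ into a pair of pants together with the two pieces bounded by $\delta_{j_1},\delta_{j_2}$ on their far sides and a third piece bounded by $\gamma$, and $\gamma$ bounds a genus-$1$ collar inside $\Sigma\setminus P$ disjoint from every $\delta_\ell$. Applying Lemma~\ref{lem:sum-relation-2} with distinguished curve $\gamma$, auxiliary curves $\delta_{j_2}$ together with the $\delta_\ell$ $(\ell\ne j_1,j_2)$, and focal pants curve $\delta_{j_1}$ — the term $\A(T_{\delta_{j_2}},\dots)$ vanishing because $\delta_{j_2}$ is repeated among the auxiliaries — yields
\[
\pm\,\A(T_{\delta_1},\dots,T_{\delta_k}) = \A(T_\gamma,T_{\delta_{j_2}},T_{\delta_{\ell_1}},\dots,T_{\delta_{\ell_{k-2}}}),
\]
where $\{\ell_1,\dots,\ell_{k-2}\} = \{1,\dots,k\}\setminus\{j_1,j_2\}$. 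The right-hand side is the abelian cycle for the curve system obtained from $\delta_1,\dots,\delta_k$ by replacing $\delta_{j_1}$ with $\gamma$, and $\Sigma\setminus P$ is a complementary subsurface of this new system of genus $g(\Sigma)\ge 2$ with only $r-1$ boundary components, so the inductive hypothesis finishes case (1).

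For case (2) we may assume case (1) fails. Then the two subsurfaces $A$ (bounded by $\delta_i$) and $B$ (bounded by $\delta_i\cup\delta_j$) into which $\delta_i$ cuts $\Sigma$ are complementary pieces of genus $\le 1$; as each is essential, $g(A) = g(B) = 1$ and $g(\Sigma) = 2$. Choose $\gamma\subset B$ with $\delta_i\cup\delta_j\cup\gamma$ bounding a pair of pants $P\subset B$; then $\delta_i,\delta_j,\gamma$ decompose $S_g$ into a pair of pants and three pieces $A$, $B\setminus P$, and the complement of $\Sigma$, of which $A$ and $B\setminus P$ both have genus $1$. With $\gamma_1 = \delta_j$ and auxiliary curves $\delta_i$ together with the $\delta_\ell$ $(\ell\ne i,j)$, one application of Lemma~\ref{lem:sum-relation-1} with $\delta_i$ as the genus-$1$ middle curve (whose term vanishes, $\delta_i$ being repeated among the auxiliaries) gives $\A(T_{\delta_j},\dots) + \A(T_\gamma,\dots) = 0$, and a second application with $\gamma$ as the genus-$1$ middle curve gives $\A(T_{\delta_j},\dots) = \A(T_\gamma,\dots)$; together these force $2\A(T_{\delta_j},T_{\delta_i},T_{\delta_{\ell_1}},\dots) = 0$, i.e.\ $2\A(T_{\delta_1},\dots,T_{\delta_k}) = 0$.

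I expect the main obstacle to be the geometric bookkeeping rather than any single step: one must choose the new curves $\gamma$ (and, in Lemma~\ref{lem:sum-relation-2}, the auxiliary curve $\theta$) so that each auxiliary twist curve falls into the prescribed complementary piece of the configuration and stays disjoint from the relevant genus-$1$ collar, verify the requisite pairwise non-isotopy, and track which terms of each sum relation drop out because an auxiliary curve coincides with a pants curve. The genuinely new content lies in the base cases $g(\Sigma) = 2$ of the two inductions, where Corollary~\ref{cor:johnson3} is unavailable and the doubling trick — and with it the distinction between Lemmas~\ref{lem:sum-relation-1} and~\ref{lem:sum-relation-2} — must be used.
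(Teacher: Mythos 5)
Your proof is correct and draws on the same ingredients as the paper (Proposition~\ref{prop:johnson3}/Corollary~\ref{cor:johnson3}, Lemmas~\ref{lem:sum-relation-1} and~\ref{lem:sum-relation-2}, and the doubling trick of Proposition~\ref{prop:ab-cyc-ord-gen-3}), but you organize case (1) quite differently. The paper settles case (1) in one stroke, with no induction on boundary components: inside the genus-$\geq 2$ piece $\Sigma$ it picks a genus-$1$ curve $\gamma_1$ and a curve $\gamma_2$ so that $\delta_1,\gamma_1,\gamma_2$ bound a pair of pants, applies Lemma~\ref{lem:sum-relation-1} and Lemma~\ref{lem:sum-relation-2} to this single triple (the curve $\theta$ required by the latter is cut off from the genus remaining in $\Sigma$ beyond $\gamma_1$, which exists precisely because $g(\Sigma)\geq 2$), and adds the two relations to obtain $2\A(T_{\delta_1},\dots,T_{\delta_k})=0$ directly; neither Corollary~\ref{cor:johnson3} nor any reduction of the number of boundary components is needed there. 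Your induction is nevertheless sound: the replacement of $\delta_{j_1}$ by $\gamma$ via Lemma~\ref{lem:sum-relation-2} is legitimate ($\theta$ exists since $\Sigma\setminus P$ has genus $g(\Sigma)\geq 2$; the term with $\delta_{j_2}$ repeated vanishes because the defining homomorphism factors through $\Z^{k-1}$ and $H_k(\Z^{k-1})=0$; and $\gamma$ is not isotopic to any surviving curve, as neither complementary piece adjacent to it in the new system is an annulus), and your base cases reproduce the paper's treatment of case (2). For case (2) the paper distinguishes $g(\Sigma)\geq 3$, handled by Proposition~\ref{prop:johnson3} applied to $T_{\delta_i}^2$ inside $\Sigma$, from $g(\Sigma)=2$, handled by the double application of Lemma~\ref{lem:sum-relation-1}; your observation that failure of (1) forces $g(A)=g(B)=1$ collapses this to the genus-$2$ doubling and avoids the Johnson commutator identity there, a small simplification --- note only that $g(B)\geq 1$ follows from the non-isotopy of $\delta_i$ and $\delta_j$ (otherwise $B$ would be an annulus), not from essentiality. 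In sum, both arguments work: the paper's pairing of the two asymmetric sum relations on a single configuration makes case (1) immediate, whereas your route trades that for an induction in which Lemma~\ref{lem:sum-relation-2} is used once per reduction step.
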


\begin{proof}
    We first consider the case in which condition~(2) holds. We may assume,
    without loss of generality, that
    \begin{itemize}
        \item \(\delta_1\) bounds a subsurface \(\Sigma\) of genus at least
        \(2\);
        \item \(\delta_2\) lies inside \(\Sigma\); and
        \item the remaining curves \(\delta_3, \dots, \delta_k\) are disjoint
        from \(\Sigma\).
    \end{itemize}

    We begin with the case \(g(\Sigma) \geq 3\). By
    Proposition~\ref{prop:johnson3}, we have \(T_{\delta_2}^2 \in [\I(\Sigma),
    \I(\Sigma)]\). Since each \(T_{\delta_j}\) commutes with all elements of
    \(\I(\Sigma)\), it follows that
    \[
    2 \A(T_{\delta_1}, \dots, T_{\delta_k}) = 0
    .\] 

    We now turn to the case \(g(\Sigma) = 2\). In this case, \(\delta_2\)
    bounds a subsurface of genus~\(1\). Choose a separating simple closed
    curve \(\gamma\) of genus~\(1\) on \(\Sigma\), disjoint from \(\delta_2\).
    Apply Lemma~\ref{lem:sum-relation-1} first to \(\delta_2, \gamma,
    \delta_1\), and then to \(\gamma, \delta_2, \delta_1\) to obtain
    \begin{align*}
        \A(T_{\delta_2}, T_{\delta_2}, \dots, T_{\delta_k}) + \A(T_{\delta_1},
        \dots, T_{\delta_k}) &= \A(T_{\gamma}, T_{\delta_2}, \dots,
        T_{\delta_k}), \\
        \A(T_{\gamma}, T_{\delta_2}, \dots, T_{\delta_k}) + \A(T_{\delta_1},
        \dots, T_{\delta_k}) &= \A(T_{\delta_2}, T_{\delta_2}, \dots,
        T_{\delta_k})
    .\end{align*}
    Summing these two relations yields the desired equality.

    Next, we consider case~(1). Without loss of generality, we may assume that
    \(\delta_1, \dots, \delta_n\) (with \(n \leq k\)) bound a subsurface
    \(\Sigma\) of genus at least \(2\) that does not contain the curves
    \(\delta_{n+1}, \dots, \delta_k\).
    Choose separating simple closed curves \(\gamma_1\) and \(\gamma_2\) on
    \(\Sigma\) such that
    \begin{itemize}
        \item \(\gamma_1\) has genus~\(1\); and
        \item \(\gamma_1, \gamma_2\), and \(\delta_1\) bound a pair of pants
        and they are arranged as shown in
        Figure~\ref{fig:gen2-rel-many-boundaries}.
    \end{itemize}

    \begin{figure}[H]
        \centering
    \def\svgwidth{.4\columnwidth}
\begingroup%
  \makeatletter%
  \providecommand\color[2][]{%
    \errmessage{(Inkscape) Color is used for the text in Inkscape, but the package 'color.sty' is not loaded}%
    \renewcommand\color[2][]{}%
  }%
  \providecommand\transparent[1]{%
    \errmessage{(Inkscape) Transparency is used (non-zero) for the text in Inkscape, but the package 'transparent.sty' is not loaded}%
    \renewcommand\transparent[1]{}%
  }%
  \providecommand\rotatebox[2]{#2}%
  \newcommand*\fsize{\dimexpr\f@size pt\relax}%
  \newcommand*\lineheight[1]{\fontsize{\fsize}{#1\fsize}\selectfont}%
  \ifx\svgwidth\undefined%
    \setlength{\unitlength}{233.53968583bp}%
    \ifx\svgscale\undefined%
      \relax%
    \else%
      \setlength{\unitlength}{\unitlength * \real{\svgscale}}%
    \fi%
  \else%
    \setlength{\unitlength}{\svgwidth}%
  \fi%
  \global\let\svgwidth\undefined%
  \global\let\svgscale\undefined%
  \makeatother%
  \begin{picture}(1,0.37886434)%
    \lineheight{1}%
    \setlength\tabcolsep{0pt}%
    \put(0,0){\includegraphics[width=\unitlength,page=1]{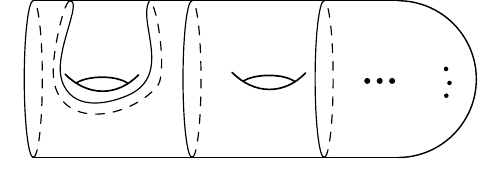}}%
    \put(0.38394276,0.00472094){\color[rgb]{0,0,0}\makebox(0,0)[t]{\lineheight{1.25}\smash{\begin{tabular}[t]{c}$\gamma_2$\end{tabular}}}}%
    \put(0.05626456,0.0029674){\color[rgb]{0,0,0}\makebox(0,0)[t]{\lineheight{1.25}\smash{\begin{tabular}[t]{c}$\delta_1$\end{tabular}}}}%
    \put(0.25644771,0.3146354){\color[rgb]{0,0,0}\makebox(0,0)[t]{\lineheight{1.25}\smash{\begin{tabular}[t]{c}$\gamma_1$\end{tabular}}}}%
    \put(0,0){\includegraphics[width=\unitlength,page=2]{gen2-rel-many-boundaries.pdf}}%
    \put(0.65740546,0.00111755){\color[rgb]{0,0,0}\makebox(0,0)[t]{\lineheight{1.25}\smash{\begin{tabular}[t]{c}$\theta$\end{tabular}}}}%
    \put(0.80741051,0.0886995){\color[rgb]{0,0,0}\makebox(0,0)[t]{\lineheight{1.25}\smash{\begin{tabular}[t]{c}$\delta_n$\end{tabular}}}}%
    \put(0.82137191,0.3192252){\color[rgb]{0,0,0}\makebox(0,0)[t]{\lineheight{1.25}\smash{\begin{tabular}[t]{c}$\delta_2$\end{tabular}}}}%
  \end{picture}%
\endgroup%

        \caption{The surface \(\Sigma\) and the curves \(\gamma_1, \gamma_2, \theta\) on \(\Sigma\).}
        \label{fig:gen2-rel-many-boundaries}
    \end{figure}
    
    In Figure~\ref{fig:gen2-rel-many-boundaries}, the curves \(\delta_2,
    \dots, \delta_n\) lie to the right of \(\gamma_2\).  We can, however,
    choose a separating simple closed curve \(\theta\) such that \(\delta_2,
    \dots, \delta_n\) lie to the right of \(\theta\), as illustrated in the
    same figure.

    Apply Lemma~\ref{lem:sum-relation-1} to \(\delta_1, \gamma_1, \gamma_2\) 
    to obtain
    \[
    \A(T_{\delta_1}, \dots, T_{\delta_k}) + \A(T_{\gamma_2}, T_{\delta_2},
    \dots, T_{\delta_k}) = \A(T_{\gamma_1}, T_{\delta_2}, \dots,
    T_{\delta_k})
    .\]
    The choice of \(\theta\) allows us to apply
    Lemma~\ref{lem:sum-relation-2} to \(\delta_1, \gamma_1,
    \gamma_2\), giving
    \[
    \A(T_{\delta_1}, \dots, T_{\delta_k}) + \A(T_{\gamma_1}, T_{\delta_2},
    \dots, T_{\delta_k}) = \A(T_{\gamma_2}, T_{\delta_2}, \dots,
    T_{\delta_k})
    .\] 
    Adding these two relations yields the desired equality.
\end{proof}

    \section{Birman--Craggs--Johnson homomorphism and relations in
\(H_2(\I_g)\)} \label{sec:BCJ}

To state the main result of this section, we first recall the
\emph{Birman--Craggs--Johnson homomorphism}
\(\sigma \colon \I_g \to \B_3'\)
(see \cite{johnsonBCJ} for details).
We briefly review its construction.

A basis \(a_1, \dots, a_g, b_1, \dots, b_g\) of the group \(H_1(S_g)\)
is called \emph{symplectic} if
\[
a_i \cdot b_j = \delta_{ij}, \quad a_i \cdot a_j = b_i \cdot b_j = 0,
\]
where \(\delta_{ij}\) denotes the Kronecker delta.
A subgroup \(V \subset H_1(S_g)\) is called \emph{a symplectic summand} if the
restriction of the intersection form to \(V\) is unimodular.
Equivalently, this means that \(H_1(S_g) = V \oplus V^{\perp}\),
where the decomposition is orthogonal with respect to the intersection form.
A symplectic basis of \(H_1(S_g; \Z/2\Z)\) and a symplectic subspace
of \(H_1(S_g; \Z/2\Z)\) are defined in the same way.

Recall that for integral homology classes \(x, y, \dots\), their reductions modulo~2
are denoted by bold letters \(\bx, \by, \dots\).

Let \(S = S_g^b\), where \(b \in \{0, 1\}\). 
Let \(\B(S)\) be the algebra over \(\Z/2\Z\) generated by formal variables
\(\overline{\bx}\) for all \(\bx \in H_1(S; \Z/2\Z)\), subject to the
following relations for all \(\bx, \by \in H_1(S; \Z/2\Z)\):
\begin{enumerate}
    \item \(\overline{\bx + \by} = \overline{\bx} + \overline{\by} + (\bx
    \cdot \by)\), where \(\bx \cdot \by\) denotes the \(\mathrm{mod}\; 2\) intersection number;
    \item \(\overline{\bx}^2 = \overline{\bx}\).
\end{enumerate}
We have the following description of \(\B(S)\).  From relations (1) and (2)
it follows that, for any basis \(\be_1, \dots, \be_{2g}\) of \(H_1(S;
\Z/2\Z)\), \(\B(S)\) is the algebra of Boolean polynomials in the formal
variables \(\overline{\be}_1, \dots, \overline{\be}_{2g}\).

The \emph{Arf invariant} is the quadratic polynomial
\[
\Arf = \sum_{j=1}^{g} \overline{\ba}_j \overline{\bb}_j
,\]
where \(\ba_1, \dots, \ba_g, \bb_1, \dots, \bb_g\) is a symplectic basis of
\(H_1(S; \Z/2\Z)\). It is well known that
\(\Arf\) is independent of the choice of symplectic basis.

Let \(\B'(S)\) denote the quotient algebra \(\B(S)/(\Arf)\). 
For each \(k\), let \(\B_k(S) \subset \B(S)\) and \(\B_k'(S) \subset \B'(S)\) 
denote the subspaces consisting of polynomials of degree at most \(k\). 
When the surface is clear from the context, we will simply write \(\B_k\) and
\(\B_k'\).

We denote by \(\sigma\) the Birman--Craggs--Johnson homomorphisms
\begin{align*}
    &\sigma \colon \I_g \to \B_3'(S_g), \\
    &\sigma \colon \I_g^1 \to \B_3(S_g^1)
.\end{align*}

Recall that the value of the Birman--Craggs--Johnson homomorphism on a Dehn
twist about a separating simple closed curve \(\delta\) is given by
\begin{align} \label{eq:BCJ-sep}
    \sigma(T_{\delta}) = \sum_{i=1}^{g'} \overline{\ba}_i \overline{\bb}_i
,\end{align}
where \(\delta\) bounds a subsurface \(\Sigma\) of genus \(g'\),
and \(\ba_1, \dots, \ba_{g'}, \bb_1, \dots, \bb_{g'}\) form a symplectic basis
of \(H_1(\Sigma; \Z/2\Z)\).
This formula is well defined for a closed surface \(S_g\), since
\(\Arf = 0\) in \(\B_3'(S_g)\).

Now we are in a position to state the main result of this section.

\begin{proposition} \label{prop:key-relation}
    Let \(\delta\) be a separating simple closed curve on \(S_g\) with \(g
    \geq 4\), bounding a subsurface \(\Sigma\) of genus at least \(3\).
    Then for separating simple closed curves \(\gamma_1, \dots, \gamma_n\) on
    \(\Sigma\), the following are equivalent:
    \begin{itemize}
        \item \(\sigma(T_{\gamma_1}) + \cdots + \sigma(T_{\gamma_n}) = 0\) or
        \(\sigma(T_{\gamma_1}) + \cdots + \sigma(T_{\gamma_n}) = \sigma(T_{\delta})\);
        \item \(\A(T_{\gamma_1}, T_{\delta}) + \cdots + \A(T_{\gamma_n}, T_{\delta}) = 0\)
        in \(H_2(\I_g)\).
    \end{itemize}
\end{proposition}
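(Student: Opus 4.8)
The plan is to prove both implications using the Johnson homomorphism $\tau$ together with the Birman--Craggs--Johnson homomorphism $\sigma$, and to exploit the relations already established in the previous sections. For the direction "abelian cycle relation $\Rightarrow$ symplectic relation", I would push the relation $\sum_i \A(T_{\gamma_i}, T_\delta) = 0$ forward along the abelianization map $\I_g \to H_1(\I_g)$, or rather along a suitable natural map out of $H_2(\I_g)$. Specifically, since each $\gamma_i$ and $\delta$ lie on the genus-$\geq 3$ subsurface $\Sigma$, and $\sigma$ is additive on commuting elements, the abelian cycle $\A(T_{\gamma_i}, T_\delta)$ maps under $\sigma_* \wedge \sigma_*$ (the pairing $H_2(\I_g) \to \Lambda^2 \B_3'$ induced by $\sigma$, or more precisely the cup-product-type pairing coming from $H_2$ of the abelianization) to $\sigma(T_{\gamma_i}) \wedge \sigma(T_\delta)$. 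Thus $\sum_i \sigma(T_{\gamma_i}) \wedge \sigma(T_\delta) = 0$ in $\Lambda^2 \B_3'$, which forces $\big(\sum_i \sigma(T_{\gamma_i})\big) \wedge \sigma(T_\delta) = 0$, i.e. $\sum_i \sigma(T_{\gamma_i})$ is a scalar multiple of $\sigma(T_\delta)$ — over $\Z/2\Z$ this is exactly the first bullet point. One must be careful here that $\sigma(T_\delta) \neq 0$, which holds since $g(\Sigma) \geq 3 \geq 1$ and \eqref{eq:birman-craggs-sep} then gives a nonzero quadratic class.

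For the converse direction — the substantive one — I would argue that the two hypotheses $\sum \sigma(T_{\gamma_i}) = 0$ and $\sum \sigma(T_{\gamma_i}) = \sigma(T_\delta)$ each suffice to kill $\sum_i \A(T_{\gamma_i}, T_\delta)$. The key input is that, by Corollary~\ref{cor:johnson3}, $T_\delta^2$ is a product of commutators of elements of $\I_g$ supported on $\Sigma$; combined with the fact established in $\S\ref{sec:case-k-2-g-gt-4}$ and $\S\ref{sec:relations-in-hk}$ that $H_2^{\mathrm{ab,sep}}(\I_g)$ is a $\Z/2\Z$-vector space, and with the generation statement of Proposition~\ref{prop:ab-sep-gen}, I can reduce to the case where all $\gamma_i$ have genus $1$. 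For genus-$1$ separating curves on $\Sigma$, $\sigma(T_{\gamma_i}) = \overline{\bm{a}}_i\overline{\bm{b}}_i$ where $(a_i,b_i)$ spans the handle cut off by $\gamma_i$, and the assertion "$\sum_i \A(T_{\gamma_i},T_\delta) = 0$ whenever $\sum_i \overline{\bm a}_i\overline{\bm b}_i \in \{0, \sigma(T_\delta)\}$" becomes a statement purely about the kernel of the linear map sending the free $\Z/2\Z$-module on genus-$1$ separating curves to $\B_3'$. I would identify that kernel explicitly using the lantern-relation-based Lemmas~\ref{lem:sum-relation-1} and~\ref{lem:sum-relation-2}: those lemmas show that whenever $\gamma_1,\gamma_2,\gamma_3,\delta$ are arranged so $\gamma_1\cup\gamma_2\cup\gamma_3$ bounds a pair of pants and the genus condition holds, the corresponding alternating sum of $\A(T_{\gamma_j},T_\delta)$ vanishes — and the same triples of curves generate all linear relations among the classes $\overline{\bm a}_i\overline{\bm b}_i$ together with the Arf relation (which accounts for the "$= \sigma(T_\delta)$" alternative, since $\Arf = 0$ forces $\sigma(T_\delta) = \sum_{\text{all handles in }\Sigma}\overline{\bm a}_i\overline{\bm b}_i$ when $g(\Sigma)$ equals the total genus, hmm — more precisely, the relation $\Arf = 0$ in $\B_3'(S_g)$ is what produces the second alternative).

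The main obstacle I anticipate is the combinatorial bookkeeping in showing that the lantern-based relations of Lemmas~\ref{lem:sum-relation-1} and~\ref{lem:sum-relation-2} generate \emph{all} of the relations among the genus-$1$ generators that the $\sigma$-hypothesis predicts — in other words, that the map from (formal $\Z/2\Z$-combinations of genus-$1$ separating curves on $\Sigma$, modulo the lantern relations) to $\B_3'(S_g)$ via $\gamma \mapsto \sigma(T_\gamma)$ is injective. This amounts to a dimension count: one must verify that the lantern relations cut the span of the genus-$1$ generators down to exactly the dimension of the image in $\B_3'$, neither more nor less. I expect this to follow by exhibiting, for a fixed symplectic basis adapted to $\Sigma$, that every quadratic Boolean monomial $\overline{\bm a}_i\overline{\bm b}_i$ (and $\overline{\bm a}_i\overline{\bm b}_j$, $\overline{\bm a}_i\overline{\bm a}_j$, etc., arising from genus-$1$ curves carrying twisted symplectic pairs) is hit, and that the only coincidences are the obvious ones plus $\Arf$; the change-of-basis formula~\eqref{eq:birman-craggs-sep} and the relations (1)--(2) defining $\B(S)$ make this a finite linear-algebra verification, but keeping track of which geometric configurations of disjoint curves realize which algebraic relations is where the care is needed.
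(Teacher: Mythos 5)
Your forward implication is exactly the paper's argument: push the relation forward under $\sigma_2$, use $\sigma_2(\A(h_1,h_2))=\sigma(h_1)\wedge\sigma(h_2)$ and $\sigma(T_\delta)\neq 0$ to conclude that $\sum_i\sigma(T_{\gamma_i})$ is $0$ or $\sigma(T_\delta)$. The problem is the converse, which is the substantive half, and there your proposal has a genuine gap. You reduce (via Proposition~\ref{prop:ab-sep-gen}-type decompositions) to genus-$1$ curves and then assert that the lantern-based relations of Lemmas~\ref{lem:sum-relation-1} and~\ref{lem:sum-relation-2} generate \emph{all} relations among the classes $\A(T_{\gamma},T_\delta)$ that are predicted by $\sigma$, i.e.\ that the map from formal $\Z/2\Z$-combinations of genus-$1$ separating curves on $\Sigma$, modulo the geometrically realized lantern relations, to $\B_3'$ is injective. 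You explicitly flag this as the "main obstacle" and offer only a hoped-for dimension count; but this injectivity is essentially equivalent to the statement being proved, and it is not a routine finite linear-algebra check, because one must realize every algebraic coincidence in $\B_3'$ by configurations of disjoint curves in the complement of $\delta$. Nothing in your sketch supplies that, so the converse is not established. A secondary confusion: you invoke the Arf relation in $\B_3'(S_g)$ to explain the alternative $\sum_i\sigma(T_{\gamma_i})=\sigma(T_\delta)$, but the relevant restriction $\sigma_\Sigma$ takes values in $\B_3(\Sigma)$, where there is no Arf quotient; the correct (and much simpler) device is to adjoin $\gamma_{n+1}=\delta$ to the collection, which changes nothing since $\A(T_\delta,T_\delta)=0$.

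The missing idea, and the reason the hypothesis $g(\Sigma)\geq 3$ appears, is Johnson's computation of the abelianization of the Torelli group: for a surface of genus at least $3$, an element lies in the commutator subgroup if and only if both its Johnson homomorphism and its Birman--Craggs--Johnson homomorphism vanish (\cite[Theorem~3]{johnson3}). The paper's converse is then short: set $f=T_{\gamma_1}\cdots T_{\gamma_n}$, supported on $\Sigma$; one has $\tau_\Sigma(f)=0$ automatically because each $\gamma_i$ is separating, and $\sigma_\Sigma(f)=0$ by hypothesis (after the reduction above), so $f\in[\I(\Sigma),\I(\Sigma)]$, i.e.\ $f=[g_1,h_1]\cdots[g_k,h_k]$ with all $g_i,h_i$ supported on $\Sigma$ and hence commuting with $T_\delta$; therefore $\sum_i\A(T_{\gamma_i},T_\delta)=\A(f,T_\delta)=0$. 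You mention $\tau$ at the outset but never use it, and without this input your route would require independently reproving (a piece of) Johnson's theorem through the combinatorics you acknowledge you cannot yet control.
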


\begin{remark}
    Proposition~\ref{prop:key-relation} also holds without the assumption that
    \(g(\Sigma) \geq 3\). In particular, using
    Lemma~\ref{lem:sum-relation-1}, the case \(g(\Sigma) = 2\) can be deduced
    from the case \(g(\Sigma) \geq 3\). Moreover, the statement remains
    valid even if the curves \(\gamma_1, \dots, \gamma_n\) are not all
    required to lie on one side of \(\delta\). Since these extensions will not
    be needed, we omit the proof.
\end{remark}

To prove Proposition~\ref{prop:key-relation} and to understand its nature,
we require the following construction. Consider the pushforward homomorphism
\(\sigma_2 \colon H_2(\I_g) \to H_2(\B_3')\) induced by the Birman--Craggs--Johnson
homomorphism \(\sigma \colon \I_g \to \B_3'\).

Since \(\B_3'\) is an abelian group, we have an isomorphism
\(H_2(\B_3') \cong \wedge^2 \B_3'\), where \(\wedge^2 \B_3'\) is defined as
the quotient of \(\B_3' \otimes \B_3'\) by all relations of the form \(x
\otimes x = 0\).

It is easy to check, for example using the bar resolution, that on an
abelian cycle \(\A(h_1, h_2)\) with \(h_1, h_2 \in \I_g\), the homomorphism
\(\sigma_2\) acts as
\begin{align} \label{eq:abelian-cycle-boolean}
    \sigma_2(\A(h_1, h_2)) = \sigma(h_1) \wedge \sigma(h_2)
.\end{align}

From~\eqref{eq:BCJ-sep}, it follows that \(\sigma(T_{\delta}) \in \B_2'\) for
all separating simple closed curves \(\delta\) on \(S_g\). Restricting
\(\sigma_2\) to the subgroup \(H_2^{\mathrm{ab,sep}}(\I_g) \subset H_2(\I_g)\)
yields a homomorphism
\[
\sigma_2 \colon H_2^{\mathrm{ab,sep}}(\I_g) \to \wedge^2 \B_2'
.\]

It is straightforward to check that this is indeed a homomorphism of \(\Sp(2g,
\Z)\)-modules, since \(\sigma\) itself is such a homomorphism.
Consequently, we obtain an \(\Sp(2g, \Z)\)-equivariant homomorphism of
\(\Z/2\Z\)-vector spaces
\[
\sigma_2 \colon H_2^{\mathrm{ab,sep}}(\I_g) \to \wedge^2 \B_2'
,\]
and the value of \(\sigma_2\) on an abelian cycle \(\A(T_{\delta_1},
T_{\delta_2})\) is given by formula~\eqref{eq:abelian-cycle-boolean}.

\begin{remark}
    From this perspective, we can reformulate
    Proposition~\ref{prop:key-relation} as follows: the relation
    \[
    \A(T_{\gamma_1}, T_{\delta}) + \cdots + \A(T_{\gamma_n}, T_{\delta}) = 0
    \] 
    holds if and only if
    \[
    \sigma_2(\A(T_{\gamma_1}, T_{\delta}) + \cdots + \A(T_{\gamma_n}, T_{\delta})) = 0
    .\]
\end{remark}

In the proof of Proposition~\ref{prop:key-relation}, we will also require the
\emph{Johnson homomorphisms}, denoted by \(\tau\):
\begin{align*}
    &\tau \colon \I_g \to \wedge^3 H_1(S_g) / H_1(S_g),\\
    &\tau \colon \I_g^1 \to \wedge^3 H_1(S_g^1).
\end{align*}
Here, the inclusion \(H_1(S_g) \hookrightarrow \wedge^3 H_1(S_g)\) is given by
\(x \mapsto \Omega \wedge x\), where \(\Omega \in \wedge^2 H_1(S_g)\) denotes
the dual of the intersection form.
For the construction of the Johnson homomorphism and its basic properties,
see~\cite{johnson-ab}.
By~\cite[Lemma~4A]{johnson-ab}, it follows that \(\tau(T_{\delta}) = 0\) whenever
\(\delta\) is a separating simple closed curve.

Now we are ready to prove Proposition~\ref{prop:key-relation}.

\begin{proof}[Proof of Proposition~\ref{prop:key-relation}]
    Suppose \(\A(T_{\gamma_1}, T_{\delta}) + \cdots + \A(T_{\gamma_n},
    T_{\delta}) = 0\) in \(H_2(\I_g)\). Then we have
    \begin{align*}
        0 = \sigma_2(\A(T_{\gamma_1}, T_{\delta}) + \cdots + \A(T_{\gamma_n},
        T_{\delta})) &= \sigma_2(\A(T_{\gamma_1} \cdots T_{\gamma_n},
        T_{\delta})) \\
        &= \sigma(T_{\gamma_1} \cdots T_{\gamma_n}) \wedge \sigma(T_{\delta})
    .\end{align*}
    Since \(\sigma(T_{\delta}) \neq 0\) by 
    formula~\eqref{eq:BCJ-sep}, one of the following holds:
    \[
    \sigma(T_{\gamma_1} \cdots T_{\gamma_n}) = 0 \quad \text{or} \quad
    \sigma(T_{\gamma_1} \cdots T_{\gamma_n}) = \sigma(T_{\delta}),
    \]
    as required.

    We now prove the converse statement. We may assume that
    \(\sigma(T_{\gamma_1}) + \cdots + \sigma(T_{\gamma_n}) = 0\),
    since otherwise we could simply add the curve \(\gamma_{n+1} = \delta\) to
    the set \(\{\gamma_1, \dots, \gamma_n\}\); this does not affect the
    statement to be proved, as \(\A(T_{\delta}, T_{\delta}) = 0\) in
    \(H_2(\I_g)\).

    Let \(f = T_{\gamma_1} \cdots T_{\gamma_n}\). We need to show that the
    equality \(\sigma(f) = 0\) implies \(\A(f, T_{\delta}) = 0\) in
    \(H_2(\I_g)\). We consider the restrictions of the
    Birman--Craggs--Johnson and Johnson homomorphisms to the Torelli subgroups
    (see~\cite[Section 10]{johnsonBCJ} and~\cite[Section 6]{johnson-ab}):
    \begin{align*}
       \sigma|_{\Sigma} &= \sigma_{\Sigma} \colon \I(\Sigma) \to \B_3(\Sigma), \\
       \tau|_{\Sigma} &= \tau_{\Sigma} \colon \I(\Sigma) \to \wedge^3 H_1(\Sigma)
    .\end{align*}
    
    Since \(\sigma_{\Sigma}(f) = \tau_{\Sigma}(f) = 0\), it follows
    from~\cite[Theorem~3]{johnson3} that \(f \in [\I(\Sigma), \I(\Sigma)]\);
    that is,
    \[
    f = [g_1, h_1] \cdots [g_k, h_k]
    ,\]
    where \(g_i, h_i \in \I_g\) are supported on \(\Sigma\) for
    \(i = 1, \dots, k\).

    It then follows immediately from Fact~\ref{ft:lin} and
    Corollary~\ref{cor:comm} that \(\A(f, T_{\delta}) = 0\), since each
    \(g_i\) and \(h_i\) commutes with \(T_{\delta}\) for all \(i = 1, \dots,
    k\).
\end{proof}

    \section{Proof of Theorem~\ref{thm:main-2}} \label{sec:h2-fin-dim}

In this section, we prove that the vector space \(H_2^{\mathrm{ab,sep}}(\I_g)\)
is finite-dimensional for \(g \geq 4\). This follows from the fact
that there are only finitely many pairwise distinct abelian cycles
\(\A(T_{\delta_1}, T_{\delta_2})\), where \(\delta_1\) and \(\delta_2\) are
separating simple closed curves of genus~\(1\). We now state the precise
proposition.

\begin{proposition} \label{prop:injective-2}
    Let \(\delta_1, \delta_2\) and \(\gamma_1, \gamma_2\) be two pairs of
    separating simple closed curves of genus \(1\) on \(S_g\) with
    \(g \geq 4\), such that the curves in each pair are disjoint and
    nonisotopic.
    If \(\sigma(T_{\delta_1}) = \sigma(T_{\gamma_1})\) and
    \(\sigma(T_{\delta_2}) = \sigma(T_{\gamma_2})\), then
    \[
    \A(T_{\delta_1}, T_{\delta_2}) = \A(T_{\gamma_1}, T_{\gamma_2})
    .\]
\end{proposition}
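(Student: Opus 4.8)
The plan is to use the Birman--Craggs--Johnson map $\sigma_2 \colon H_2^{\mathrm{ab,sep}}(\I_g) \to \wedge^2 \B_2'$ together with the genus-$1$ generation result (Proposition~\ref{prop:ab-sep-gen}) and the key relation (Proposition~\ref{prop:key-relation}). Since $\delta_1, \delta_2$ are separating curves of genus $1$, the value $\sigma(T_{\delta_i}) \in \B_2'$ is a quadratic Boolean polynomial of the form $\overline{\ba}_i \overline{\bb}_i$ where $\ba_i, \bb_i$ span the homology of the genus-$1$ subsurface bounded by $\delta_i$. The hypothesis $\sigma(T_{\delta_i}) = \sigma(T_{\gamma_i})$ for $i = 1, 2$ then gives $\sigma_2(\A(T_{\delta_1}, T_{\delta_2})) = \sigma(T_{\delta_1}) \wedge \sigma(T_{\delta_2}) = \sigma(T_{\gamma_1}) \wedge \sigma(T_{\gamma_2}) = \sigma_2(\A(T_{\gamma_1}, T_{\gamma_2}))$ by formula~\eqref{eq:abelian-cycle-boolean}. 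So what remains is to show that $\sigma_2$ is \emph{injective} on $H_2^{\mathrm{ab,sep}}(\I_g)$ for $g \geq 4$; equivalently, that $\A(T_{\delta_1}, T_{\delta_2}) - \A(T_{\gamma_1}, T_{\gamma_2}) = 0$ whenever its $\sigma_2$-image vanishes.

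First I would reduce to the case where all four curves lie on one side of a single separating curve of high genus. Since $g \geq 4$, I can find a genus-$1$ separating curve $\delta$ disjoint from all of $\delta_1, \delta_2, \gamma_1, \gamma_2$ only if there is room; more robustly, I would instead embed the configuration: choose a separating curve $\delta$ bounding a subsurface $\Sigma$ of genus $\geq 3$ that contains $\delta_1, \delta_2$ (and, by a parallel argument or by symmetry, handle $\gamma_1, \gamma_2$). The point is that by Proposition~\ref{prop:ab-sep-gen} and its proof, modulo relations in $H_2^{\mathrm{ab,sep}}(\I_g)$ every such abelian cycle $\A(T_{\delta_1}, T_{\delta_2})$ can be rewritten using genus-$1$ curves sitting inside a fixed subsurface, and then Proposition~\ref{prop:key-relation} tells us precisely that a $\Z/2\Z$-linear combination $\sum_j \A(T_{\theta_j}, T_{\delta})$ of abelian cycles (with $\delta$ bounding genus $\geq 3$ and all $\theta_j$ on one side) vanishes in $H_2(\I_g)$ if and only if $\sum_j \sigma(T_{\theta_j}) \in \{0, \sigma(T_{\delta})\}$. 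So the strategy is: express $\A(T_{\delta_1}, T_{\delta_2}) - \A(T_{\gamma_1}, T_{\gamma_2})$ as such a combination $\sum_j \A(T_{\theta_j}, T_{\delta})$ for a suitable auxiliary genus-$\geq 3$ curve $\delta$, check that its $\sigma_2$-image is zero (which holds by the hypothesis once everything is set up correctly), and conclude vanishing from Proposition~\ref{prop:key-relation}.

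Carrying this out requires a bookkeeping step: one must choose the auxiliary curve $\delta$ and the intermediate genus-$1$ curves $\theta_j$ so that both $\A(T_{\delta_1}, T_{\delta_2})$ and $\A(T_{\gamma_1}, T_{\gamma_2})$ can be written in the form $\sum_j \epsilon_j \A(T_{\theta_j}, T_{\delta})$ with the \emph{same} second coordinate $\delta$. Using the bilinearity property~(1) of abelian cycles and Lemma~\ref{lem:sum-relation-1}, I would first write $\A(T_{\delta_1}, T_{\delta_2}) = \sum_i \A(T_{\delta_1}, T_{\mu_i})$ where $\mu_i$ are genus-$1$ curves realizing a decomposition of $T_{\delta_2}$ relative to a genus-$1$ curve $\delta$ disjoint from $\delta_1$; iterating, and noting $\sigma(T_{\delta_2}) = \sum_i \sigma(T_{\mu_i})$ up to $\Arf$, I reduce to combinations against $\delta$. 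The same for the $\gamma$'s. Then $\A(T_{\delta_1}, T_{\delta_2}) - \A(T_{\gamma_1}, T_{\gamma_2})$ becomes $\sum \A(T_{\nu_l}, T_{\delta})$ for genus-$1$ curves $\nu_l$ with $\sum_l \sigma(T_{\nu_l}) = \sigma(T_{\delta_1})\,\text{-contribution} - \sigma(T_{\gamma_1})\,\text{-contribution}$; the hypothesis forces this sum into $\{0, \sigma(T_\delta)\}$, so Proposition~\ref{prop:key-relation} gives the vanishing. The main obstacle I expect is the geometric realizability: ensuring a single separating curve $\delta$ of genus $\geq 3$ disjoint from all the relevant curves actually exists on $S_g$ for $g = 4$ (the tightest case), and that all the intermediate genus-$1$ curves can be placed disjointly inside the appropriate subsurface so that Lemma~\ref{lem:sum-relation-1} and Proposition~\ref{prop:key-relation} apply verbatim; this is where one may need to split into a couple of subcases according to how $\{\delta_1, \delta_2\}$ and $\{\gamma_1, \gamma_2\}$ sit relative to each other.
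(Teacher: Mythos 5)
Your setup is right as far as it goes: the hypothesis does give $\sigma_2(\A(T_{\delta_1},T_{\delta_2}))=\sigma_2(\A(T_{\gamma_1},T_{\gamma_2}))$, and Proposition~\ref{prop:key-relation} is indeed the engine of the argument. But the part you defer as ``bookkeeping'' is precisely the substance of the proof, and the route you sketch for it does not work as stated. Proposition~\ref{prop:key-relation} only compares cycles $\A(T_{\theta_j},T_{\delta})$ sharing one and the same curve $\delta$, with all $\theta_j$ on one side of it; to bring $\A(T_{\delta_1},T_{\delta_2})-\A(T_{\gamma_1},T_{\gamma_2})$ into that form you would need an auxiliary separating curve compatible with both configurations, and in general no curve of genus $\geq 3$ (nor of genus $1$) disjoint from all of $\delta_1,\delta_2,\gamma_1,\gamma_2$ exists --- for $g=4$ the two pairs can jointly fill the surface. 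So one must interpolate through intermediate configurations, changing one entry at a time, and the proposal gives no mechanism guaranteeing that suitable intermediate curves exist. The ingredient you are missing is the rigidity statement the paper proves as Lemma~\ref{lem:mod-2-sigma} (with Corollary~\ref{cor:mod-2-decomposition}): for a genus-$1$ separating curve, the value $\sigma(T_{\delta})=\overline{\ba}\,\overline{\bb}$ determines the mod-$2$ reduction of the rank-$2$ symplectic subgroup it bounds. This is what turns the hypothesis $\sigma(T_{\delta_i})=\sigma(T_{\gamma_i})$ into the geometric statement that the corresponding subgroups $U_i$ and $V_i$ agree mod $2$, which is what makes an interpolation possible at all; knowing only that the wedge products agree in $\wedge^2\B_2'$ is far too weak.

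The paper's actual proof first observes that $\A(T_{\delta_1},T_{\delta_2})$ depends only on the pair of orthogonal rank-$2$ symplectic subgroups $(U_1,U_2)\subset H_1(S_g)$, rewrites Proposition~\ref{prop:key-relation} in linear-algebraic form (Proposition~\ref{prop:key-relation-lin}), and then, using the mod-$2$ rigidity above, explicitly constructs integral symplectic subgroups $U_2'$, $V_1'$, $V_2'$ (with carefully chosen parities and orthogonality relations) so that a chain of applications of Proposition~\ref{prop:key-relation-lin} yields $\A(U_1,U_2)=\A(U_1,U_2')$, $\A(V_1,V_2)=\A(V_1',V_2')$, and finally everything is placed inside a single rank-$6$ symplectic subgroup $W$ where Lemma~\ref{lem:gen-3-subsurface} (itself another consequence of Proposition~\ref{prop:key-relation-lin}) closes the loop. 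None of this integral symplectic construction, nor Lemma~\ref{lem:mod-2-sigma}, nor Lemma~\ref{lem:gen-3-subsurface}, appears in your proposal, and without them the reduction to a common second entry --- the step your argument hinges on --- is unsupported. (Also, aiming to prove that $\sigma_2$ is injective on all of $H_2^{\mathrm{ab,sep}}(\I_g)$ overshoots: the paper never establishes that, and the proposition needs only the much weaker comparison of two individual cycles.)
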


\begin{remark}
From Proposition~\ref{prop:injective-2}, one can deduce the following:
if two abelian cycles \(\A(T_{\delta_1}, T_{\delta_2})\) and
\(\A(T_{\gamma_1}, T_{\gamma_2})\) satisfy
\[
\sigma_2(\A(T_{\delta_1}, T_{\delta_2})) =
\sigma_2(\A(T_{\gamma_1}, T_{\gamma_2}))
,\] 
then we have
\[
\A(T_{\delta_1}, T_{\delta_2}) = \A(T_{\gamma_1}, T_{\gamma_2})
.\] 
\end{remark}

\medskip
\begin{proof}[Proof of Theorem~\ref{thm:main-2}]
    Since \(\B_2'\) is a finite set, there are only finitely many
    possibilities for \(\sigma(T_{\delta_1})\) and \(\sigma(T_{\delta_2})\),
    and the theorem follows.
\end{proof}

To prove Proposition~\ref{prop:injective-2}, we first reduce the topological
problem to a linear-algebraic one by describing the connection between the
abelian cycles \(\A(T_{\delta_1}, T_{\delta_2})\) and the corresponding
splittings of \(H_1(S_g)\).

Consider disjoint, nonisotopic, separating simple closed curves \(\delta_1\)
and \(\delta_2\) on \(S_g\). Suppose that \(\delta_1 \cup \delta_2\)
decomposes \(S_g\) into three subsurfaces \(\Sigma_1, \Sigma, \Sigma_2\) such
that
\[
\partial \Sigma_1 = \delta_1, \qquad \partial \Sigma_2 = \delta_2, \qquad 
\partial \Sigma = \delta_1 \cup \delta_2
.\]
This induces a splitting
\begin{align*}
    H_1(S_g) = U_1 \oplus U \oplus U_2
,\end{align*}
where \(U_1 = H_1(\Sigma_1)\), \(U_2 = H_1(\Sigma_2)\), and \(U = H_1(\Sigma)\). 

We will also write the abelian cycle \(\A(T_{\delta_1}, T_{\delta_2})\) as
\(\A(U_1, U_2)\); this is well-defined by the following lemma.

\begin{lemma}
    The abelian cycle \(\A(T_{\delta_1}, T_{\delta_2})\) is determined by the
    pair \((U_1, U_2)\).
\end{lemma}

\begin{proof}
    Consider two other disjoint, nonisotopic, separating simple closed curves
    \(\delta_1'\) and \(\delta_2'\), inducing the same splitting of
    \(H_1(S_g)\). It follows that there exists an element \(f \in \I_g\)
    taking \(\delta_1\) to \(\delta_1'\) and \(\delta_2\) to \(\delta_2'\).
    Hence,
    \[
    \A(T_{\delta_1}, T_{\delta_2}) = f_{*} (\A(T_{\delta_1}, T_{\delta_2})) = 
    \A(T_{\delta_1'}, T_{\delta_2'})
    ,\] 
    and the lemma follows.
\end{proof}

Throughout this paper, by a \emph{splitting} of an abelian group \(H\) with a
symplectic form, we mean a decomposition
\[
H = X_1 \oplus \dots \oplus X_n,
\]
where \(X_1, \dots, X_n \subset H\) are nonzero, pairwise orthogonal symplectic
summands with respect to the symplectic form on \(H\).

From this perspective, we obtain the following consequence of the proof of
Proposition~\ref{prop:ab-cyc-ord-gen-3}.

\begin{corollary}
    Consider a splitting
    \[
    H_1(S_3) = U_1 \oplus U_2 \oplus U_3
    .\]
    Then
    \[
    \A(U_1, U_2) = \A(U_2, U_3) = \A(U_3, U_1).
    \]
    Therefore, in \(H_2(\I_3)\), the abelian cycle of this form is completely
    determined by the splitting and does not depend on the choice of summands.
\end{corollary}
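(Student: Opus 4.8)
The plan is to deduce this from Lemma~\ref{lem:sum-relation-1}, applied cyclically in the same way as in the proof of Proposition~\ref{prop:ab-cyc-ord-gen-3}; the one extra ingredient is realizing an arbitrary splitting by a suitable configuration of curves.

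First I would realize the splitting geometrically. The subgroups \(U_1, U_2, U_3\) are pairwise orthogonal symplectic subgroups of the rank-\(6\) group \(H_1(S_3)\), and each is nontrivial (otherwise the abelian cycles in the statement would not be defined), so each \(U_i\) has rank \(2\). I would then produce pairwise disjoint separating simple closed curves \(\delta_1, \delta_2, \delta_3\) bounding genus-\(1\) subsurfaces \(\Sigma_1, \Sigma_2, \Sigma_3\) with \(H_1(\Sigma_i) = U_i\); an Euler characteristic count shows that the complement of three disjoint genus-\(1\) subsurfaces of \(S_3\) is automatically a pair of pants, so this is a configuration of the type appearing in Lemma~\ref{lem:sum-relation-1}. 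Existence is a standard change-of-coordinates argument: concatenating symplectic bases of \(U_1, U_2, U_3\) produces a symplectic basis of \(H_1(S_3)\), and since \(\Sp(6, \Z)\) acts transitively on symplectic bases while \(\Mod(S_3) \to \Sp(6, \Z)\) is surjective, every ordered splitting into rank-\(2\) symplectic subgroups is the image of a fixed model pair-of-pants configuration under some mapping class. For such a configuration, \(\delta_1 \cup \delta_2\) cuts \(S_3\) into \(\Sigma_1\), \(\Sigma_2\), and a genus-\(1\) piece with first homology \(U_3\); hence \(\A(T_{\delta_1}, T_{\delta_2}) = \A(U_1, U_2)\), and likewise \(\A(T_{\delta_2}, T_{\delta_3}) = \A(U_2, U_3)\) and \(\A(T_{\delta_3}, T_{\delta_1}) = \A(U_3, U_1)\).

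Next I would apply Lemma~\ref{lem:sum-relation-1} with \((\gamma_1, \gamma_2, \gamma_3) = (\delta_1, \delta_2, \delta_3)\), taking the single remaining curve to be \(\delta_2\) itself (allowed by the remark following that lemma, since \(\delta_2\) is isotopic to \(\partial \Sigma_2\)). This gives
\[
\A(T_{\delta_1}, T_{\delta_2}) + \A(T_{\delta_3}, T_{\delta_2}) = \A(T_{\delta_2}, T_{\delta_2}) .
\]
The right-hand side vanishes, since the homomorphism \(\Z^2 \to \I_3\) defining \(\A(T_{\delta_2}, T_{\delta_2})\) factors through the addition map \(\Z^2 \to \Z\) and \(H_2(\Z) = 0\). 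By Proposition~\ref{prop:ab-cyc-ord-gen-3} each abelian cycle of this form is \(2\)-torsion, so the displayed relation together with the antisymmetry of abelian cycles gives \(\A(T_{\delta_1}, T_{\delta_2}) = \A(T_{\delta_2}, T_{\delta_3})\), i.e.\ \(\A(U_1, U_2) = \A(U_2, U_3)\). Repeating the argument with \((\gamma_1, \gamma_2, \gamma_3) = (\delta_2, \delta_3, \delta_1)\) and distinguished curve \(\delta_3\) yields \(\A(U_2, U_3) = \A(U_3, U_1)\), completing the proof.

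There is no genuinely hard step: the computation is a direct transcription of the proof of Proposition~\ref{prop:ab-cyc-ord-gen-3}. The only point requiring care is the bookkeeping in the realization step — verifying that a pair-of-pants configuration realizing the prescribed splitting exists, and that for each of the three pairs of its boundary curves the ``other side'' carries exactly the intended summand, so that the abelian cycles produced are precisely \(\A(U_1, U_2)\), \(\A(U_2, U_3)\), and \(\A(U_3, U_1)\).
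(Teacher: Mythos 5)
Your proof is correct and follows essentially the paper's route: the corollary is exactly the cyclic application of Lemma~\ref{lem:sum-relation-1} from the proof of Proposition~\ref{prop:ab-cyc-ord-gen-3}, combined with \(\A(T_{\delta}, T_{\delta}) = 0\) and antisymmetry. Your explicit change-of-coordinates step realizing the splitting by a pair-of-pants configuration is a correct elaboration of what the paper leaves implicit in its notation \(\A(U_i, U_j)\).
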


\medskip

We say that a separating simple closed curve \(\gamma\) \emph{bounds} a
symplectic summand \(V \subset H_1(S_g)\) if it induces a decomposition \(H_1(S_g) = V
\oplus V^{\perp}\). 
It follows from~\eqref{eq:BCJ-sep} that the value \(\sigma(T_\gamma)\) depends only on the
unordered splitting \(\{V, V^{\perp}\}\). 
We define 
\[
\sigma(V) = \sigma(V^{\perp}) = \sigma(T_\gamma)
.\] 
Moreover, \(\sigma(V)\) depends only on \(\bV\) (the reduction of \(V\) modulo
\(2\)). We then set \(\sigma(\bV) = \sigma(V)\).

Consider two disjoint, nonisotopic, separating simple closed curves
\(\delta_1, \delta_2\) of genus \(1\) on \(S_g\). Let \(U_1, U_2 \subset
H_1(S_g)\) be the symplectic summands bounded by \(\delta_1\) and \(\delta_2\),
respectively. For \(j = 1,2\), let \(x_j, y_j\) form a symplectic basis of
\(U_j\). Then
\[
\sigma_2(\A(U_1, U_2)) = \sigma(T_{\delta_1}) \wedge \sigma(T_{\delta_2})
= \overline{\bx}_1 \overline{\by}_1 \wedge \overline{\bx}_2 \overline{\by}_2
.\]

\medskip
We now reformulate Proposition~\ref{prop:key-relation} in linear algebraic terms.

\begin{proposition} \label{prop:key-relation-lin}
    Let \(g \geq 4\), and let \(U \subset H_1(S_g)\) be a symplectic
    summand. Let \(V_1, \dots, V_n \subset H_1(S_g)\) be symplectic summands
    such that \(V_1, \dots, V_n \subset U\) and \(\rank U \geq 6\).
    Then the following are equivalent:
    \begin{itemize}
        \item \(\sigma(V_1) + \cdots + \sigma(V_n) = 0\) or
        \(\sigma(V_1) + \cdots + \sigma(V_n) = \sigma(U)\);
        \item \(\A(V_1, U^{\perp}) + \dots + \A(V_n, U^{\perp}) = 0\) in
        \(H_2(\I_g)\).
    \end{itemize}
\end{proposition}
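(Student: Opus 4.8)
The plan is to deduce Proposition~\ref{prop:key-relation-lin} from Proposition~\ref{prop:key-relation} by translating between symplectic subgroups and the separating simple closed curves that bound them. Since \(U\) is bounded by an honest separating curve we have \(0 < \rank U < 2g\), and likewise for \(U^{\perp}\).

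First I would record two realization facts. For a symplectic subgroup \(W \subset H_1(S_g)\) of rank \(2h\) with \(0 < h < g\), there is a separating simple closed curve bounding a subsurface \(\Sigma\) with \(g(\Sigma) = h\) and \(H_1(\Sigma) = W\): choose a symplectic basis of \(H_1(S_g)\) adapted to the orthogonal splitting \(H_1(S_g) = W \oplus W^{\perp}\) and realize it geometrically. Relative version: if \(\Sigma\) is such a subsurface and \(V \subset W = H_1(\Sigma)\) is a symplectic subgroup, then inside \(\mathrm{int}(\Sigma)\) there is a separating simple closed curve \(\gamma\) bounding a subsurface \(\Sigma' \subset \Sigma\) with \(H_1(\Sigma') = V\) (apply the first fact inside \(\Sigma\), using a symplectic basis of \(W\) adapted to \(V \oplus (V^{\perp} \cap W)\)). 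The degenerate subgroups may be discarded from \(V_1, \dots, V_n\) without affecting the statement: if \(V_i = 0\) then \(\sigma(V_i) = 0\) and \(\A(V_i, U) = 0\); if \(V_i = W\) then \(\A(V_i, U) = \A(T_\delta, T_\delta) = 0\) while deleting the summand \(\sigma(V_i) = \sigma(U)\) merely toggles \(\sigma(V_1) + \cdots + \sigma(V_n)\) between \(0\) and \(\sigma(U)\), which preserves membership in \(\{0, \sigma(U)\}\). So I may assume \(0 \subsetneq V_i \subsetneq W\) for all \(i\), hence all the curves \(\gamma_i\) below are essential.

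Now the main step. In case~(1), put \(W = U\); since \(\rank U \geq 6\) there is a separating simple closed curve \(\delta\) bounding a subsurface \(\Sigma\) with \(H_1(\Sigma) = U\) and \(g(\Sigma) \geq 3\), and I realize each \(V_i\) by a curve \(\gamma_i \subset \mathrm{int}(\Sigma)\). Then \(\sigma(T_{\gamma_i}) = \sigma(V_i)\) and \(\sigma(T_\delta) = \sigma(U)\) by formula~\eqref{eq:birman-craggs-sep} and the definition of \(\sigma\) on a symplectic subgroup, and \(\A(T_{\gamma_i}, T_\delta) = \A(V_i, U)\) by the definition of the abelian-cycle notation (the curve \(\gamma_i\) bounds \(V_i\), the curve \(\delta\) bounds \(U\), and they are disjoint). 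Thus Proposition~\ref{prop:key-relation}, applied to \(\delta\) and \(\gamma_1, \dots, \gamma_n\), is exactly the claimed equivalence. In case~(2), put \(W = U^{\perp}\) and let \(\Sigma\) be the subsurface bounded by \(\delta\) with \(H_1(\Sigma) = U^{\perp}\) (so \(\delta\) also bounds the complementary subsurface, whose first homology is \(U\)); since \(\rank U^{\perp} \geq 6\) we again have \(g(\Sigma) \geq 3\). Realizing each \(V_i \subset U^{\perp}\) by \(\gamma_i \subset \mathrm{int}(\Sigma)\) and arguing as before, one has \(\sigma(T_\delta) = \sigma(U^{\perp}) = \sigma(U)\) since \(\sigma\) depends only on the unordered splitting \(\{U, U^{\perp}\}\), and \(\A(T_{\gamma_i}, T_\delta) = \A(V_i, U)\); Proposition~\ref{prop:key-relation} applied to \(\delta, \gamma_1, \dots, \gamma_n\) gives the result.

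I do not expect a genuine obstacle: the argument is a dictionary between the topological picture of Proposition~\ref{prop:key-relation} and the linear-algebraic statement. The only point requiring care is the bookkeeping — checking that the configuration in which \(\gamma_i\) lies inside \(\Sigma\) and \(\delta = \partial\Sigma\) really yields the abelian cycle denoted \(\A(V_i, U)\), and that \(\sigma(T_\delta)\) depends only on the bounded symplectic subgroup modulo~\(2\); both are immediate once the definitions are unwound, and no new homological input beyond Proposition~\ref{prop:key-relation} is needed.
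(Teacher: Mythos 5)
Your proposal is correct and matches the paper's (implicit) argument: the paper offers no separate proof of Proposition~\ref{prop:key-relation-lin}, presenting it as a direct reformulation of Proposition~\ref{prop:key-relation}, and your dictionary --- realizing \(U\) (or \(U^{\perp}\)) by a curve \(\delta\) bounding a genus-\(\geq 3\) subsurface \(\Sigma\), realizing each \(V_i\) by a curve in \(\Sigma\), and invoking the well-definedness of \(\A(\cdot,\cdot)\) and of \(\sigma\) on splittings --- is exactly the intended translation. Your handling of the degenerate cases \(V_i = 0\) and \(V_i = W\) is a sensible extra precaution and does not change the substance.
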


\begin{remark}
    We will also use a variant of this proposition in which the arguments of
    the abelian cycle are interchanged.
\end{remark}

In the proof of Proposition~\ref{prop:injective-2}, we will require the
following lemmas.

\begin{lemma} \label{lem:mod-2-sigma}
    For \(g \geq 3\), let \(\bV \subset H_1(S_g; \Z/2\Z)\) be a
    \(2\)-dimensional symplectic subspace with \(\sigma(\bV) =
    \overline{\ba}\,\overline{\bb}\) for some \(\ba, \bb \in H_1(S_g;
    \Z/2\Z)\) satisfying \(\ba \cdot \bb = 1\). Then \(\bV = \langle \ba, \bb \rangle\).
\end{lemma}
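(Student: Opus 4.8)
The plan is to reduce the statement to an elementary fact about decomposable bivectors in $\wedge^2 H_1(S_g;\Z/2\Z)$.

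First I would rewrite the hypothesis. Since $\bV$ is a two-dimensional symplectic subspace it has a symplectic basis $\bx,\by$ with $\bx\cdot\by=1$, and by formula~\eqref{eq:birman-craggs-sep} we have $\sigma(\bV)=\overline{\bx}\,\overline{\by}$. So the hypothesis reads $\overline{\bx}\,\overline{\by}=\overline{\ba}\,\overline{\bb}$ in $\B_2'(S_g)$, and it suffices to prove $\langle\bx,\by\rangle=\langle\ba,\bb\rangle$; note both are hyperbolic planes, as $\ba,\bb$ are necessarily independent because $\overline{\ba}\,\overline{\bb}=\sigma(\bV)\neq 0$.

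Next I would pass to the associated graded of the degree filtration. A short computation with the defining relations of $\B(S_g)$ shows that for any $\bu,\bv$ the product $\overline{\bu}\,\overline{\bv}$ equals $\bu\wedge\bv$ modulo terms of degree at most $1$, under the identification $\B_2(S_g)/\B_1(S_g)\cong\wedge^2 H_1(S_g;\Z/2\Z)$. Lifting the given equality from $\B_2'(S_g)$ to $\B_2(S_g)$ costs an element of the ideal $(\Arf)$, which in degree $2$ is spanned by $\Arf$ itself when $g\geq 3$ (this is the standard description of $\B_2'$, and is consistent with Johnson's abelianization of $\I_g$); since $\Arf$ maps to the symplectic bivector $\Omega\in\wedge^2 H_1(S_g;\Z/2\Z)$, we obtain the alternative
\[
\bx\wedge\by=\ba\wedge\bb \qquad\text{or}\qquad \bx\wedge\by=\ba\wedge\bb+\Omega.
\]
The second case is impossible for $g\geq 3$: viewing a bivector as the contraction operator $H_1(S_g;\Z/2\Z)^{*}\to H_1(S_g;\Z/2\Z)$, the decomposables $\bx\wedge\by$ and $\ba\wedge\bb$ each have rank at most $2$, so $\bx\wedge\by+\ba\wedge\bb$ has rank at most $4$, whereas $\Omega$ has rank $2g\geq 6$. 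Hence $\bx\wedge\by=\ba\wedge\bb$ in $\wedge^2 H_1(S_g;\Z/2\Z)$. Finally, a nonzero decomposable bivector determines the plane it spans, since $\dim H_1(S_g;\Z/2\Z)=2g\geq 3$ gives $\langle\bx,\by\rangle=\{\bv:\bv\wedge\bx\wedge\by=0\}$ and likewise for $\langle\ba,\bb\rangle$; therefore $\langle\bx,\by\rangle=\langle\ba,\bb\rangle$, i.e.\ $\bV=\langle\ba,\bb\rangle$.

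The only delicate point is controlling the indeterminacy introduced by the quotient $\B'=\B/(\Arf)$ — that in degree $2$ it is exactly $\langle\Arf\rangle$ — and then the rank comparison that eliminates the $\Omega$-term; this is where $g\geq 3$ is used, and everything else is formal. If one prefers to avoid quoting the structure of $\B_2'$, the same conclusion can be reached entirely inside the quadratic-form model of $\B(S_g)$: the hypothesis says $q(\bx)q(\by)=q(\ba)q(\bb)$ for every $\Z/2\Z$-quadratic form $q$ refining the intersection form with $\Arf(q)=1$, i.e.\ that the all-ones quadratic form is induced on the planes $\langle\bx,\by\rangle$ and $\langle\ba,\bb\rangle$ by exactly the same such $q$; if these planes were distinct one could, using that $\langle\bx,\by\rangle^{\perp}$ has rank $2g-2\geq 4$, build a $q$ with $\Arf(q)=1$ that is all-ones on one plane but not the other, contradicting the hypothesis. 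That construction — prescribing the restriction of $q$ to $\langle\bx,\by\rangle^{\perp}$ to have a given value on one vector and a given Arf invariant — is a routine finite check and is the only genuine computation in either version of the proof.
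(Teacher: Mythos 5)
Your argument is correct, but it takes a genuinely different route from the paper. The paper also starts from a symplectic basis \(\bx, \by\) of \(\bV\) with \(\overline{\bx}\,\overline{\by}=\overline{\ba}\,\overline{\bb}\), but then works entirely inside \(\B_3'\): it decomposes \(\bx=\bx_0+\bx_1\) and \(\by=\by_0+\by_1\) orthogonally with respect to \(\langle \ba,\bb\rangle\), multiplies the relation by \(\overline{\bx}\) and uses the idempotency relation \(\overline{\bx}^2=\overline{\bx}\) to reduce to \(\overline{\bx}_1\,\overline{\ba}\,\overline{\bb}=0\) in \(\B_3'\), which forces \(\bx_1=0\) (and similarly for \(\by\)). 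You instead pass to the associated graded piece \(\B_2/\B_1\cong\wedge^2 H_1(S_g;\Z/2\Z)\), kill the \(\Arf\)-ambiguity by the rank comparison (a sum of two decomposables has rank at most \(4\), while \(\Omega\) has rank \(2g\geq 6\)), and finish with the standard fact that a nonzero decomposable bivector determines its plane. Each approach leans on one unproved structural fact about the Birman--Craggs--Johnson target: the paper quietly uses that a product of three ``independent'' generators is nonzero in \(\B_3'\), while you use that \((\Arf)\cap\B_2=\{0,\Arf\}\); the latter is indeed true for \(g\geq 3\) (it follows, e.g., from the function-algebra model and a weight count: both a putative relation and its sum with \(\Arf\) would be degree-\(\leq 2\) Boolean functions supported on the \(\Arf=1\) locus, whose size \(2^{2g-1}-2^{g-1}\) is smaller than twice the Reed--Muller minimum weight \(2^{2g-2}\)), and it is consistent with Johnson's dimension count for \(\B_2'\), but a one-line justification or citation would tighten the writeup. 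Your version has the virtue of isolating exactly where \(g\geq 3\) enters (the rank of \(\Omega\)); the paper's is more self-contained within the defining relations of \(\B\). One small slip in your alternative sketch: equality in \(\B'=\B/(\Arf)\) corresponds to agreement of the associated functions on quadratic forms \(q\) with \(\Arf(q)=0\) (the ideal consists of functions supported where \(\Arf=1\)), so the quantifier should run over \(\Arf(q)=0\) rather than \(\Arf(q)=1\); the separating-form construction goes through after this swap.
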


\begin{proof}
    Let \(\bx, \by \in H_1(S_g; \Z/2\Z)\) form a symplectic basis for \(\bV\).
    Then \(\overline{\bx}\, \overline{\by} = \overline{\ba}\,
    \overline{\bb}\). We have the following decomposition
    \begin{align*}
        \bx &= \bx_0 + \bx_1, \qquad \text{with } \bx_0 \in \langle \ba, \bb
        \rangle, \bx_1 \in \langle \ba, \bb \rangle^{\perp}, \\
        \by &= \by_0 + \by_1, \qquad \text{with } \by_0 \in \langle \ba, \bb
        \rangle, \by_1 \in \langle \ba, \bb \rangle^{\perp}
    .\end{align*}

    If \(\bx_0 = \by_0 = 0\), we obtain a contradiction with \(\sigma(\bV) =
    \overline{\ba}\, \overline{\bb}\).
    Assume instead that \(\bx_0 \neq 0\). Then in \(\B_3'\) we have
    \[
    \overline{\ba}\, \overline{\bb} = \overline{\bx}\, \overline{\by} 
    = \overline{\bx}\, \overline{\ba}\, \overline{\bb} 
    = (\overline{\bx}_0 + \overline{\bx}_1)\, \overline{\ba}\, \overline{\bb} 
    = \overline{\ba}\, \overline{\bb} + \overline{\bx}_1\, \overline{\ba}\,
    \overline{\bb}
    .\]
    Hence \(\overline{\bx}_1\, \overline{\ba}\, \overline{\bb} = 0\) in \(\B_3'\),
    which implies \(\bx_1 = 0\). Therefore, \(\bx \in \langle \ba, \bb \rangle\).

    Similarly, if \(\by_0 \neq 0\), then \(\by \in \langle \ba, \bb \rangle\)
    and we are done. Otherwise, if \(\by_0 = 0\), we obtain a contradiction with
    \(\bx \cdot \by = 1\). The lemma follows.
\end{proof}

\begin{corollary}\label{cor:mod-2-decomposition}
    For \(g \geq 3\), let \(V \subset H_1(S_g)\) be a symplectic
    summand of rank \(2\) with \(\sigma(V) = \overline{\ba}\, \overline{\bb}\)
    for some \(a, b \in H_1(S_g)\) satisfying \(a \cdot b = 1\). Then
    there exists a symplectic basis \(x, y\) for \(V\) such that \(\bx = \ba\)
    and \(\by = \bb\).
\end{corollary}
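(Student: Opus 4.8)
The plan is to reduce everything modulo $2$, apply Lemma~\ref{lem:mod-2-sigma} to identify the reduction of $V$, and then lift the resulting symplectic basis back to $V$ using surjectivity of the reduction map on symplectic groups.

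First I would pass to the reduction $\bV = V/2V \subset H_1(S_g;\Z/2\Z)$. Since $V$ is a rank-$2$ symplectic subgroup, the restriction of the intersection form to $V$ is unimodular; reducing modulo $2$, the induced form on $\bV$ is nondegenerate, so $\bV$ is a two-dimensional symplectic subspace of $H_1(S_g;\Z/2\Z)$. By the conventions recalled above, $\sigma(\bV) = \sigma(V) = \overline{\ba}\,\overline{\bb}$, and $a \cdot b = 1$ gives $\ba \cdot \bb = 1$ in $\Z/2\Z$. Lemma~\ref{lem:mod-2-sigma} therefore yields $\bV = \langle \ba, \bb\rangle$, and since $\ba \cdot \bb = 1$ the pair $(\ba, \bb)$ is an ordered symplectic basis of $\bV$.

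Next I would lift. Fix any symplectic basis $e, f$ of the lattice $V$; its reduction $(\be, \bf)$ is a symplectic basis of $\bV$. The group $\Sp(\bV) \cong \SL(2,\Z/2\Z)$ acts simply transitively on ordered symplectic bases of $\bV$, so there is $\overline{g} \in \Sp(\bV)$ with $\overline{g}(\be) = \ba$ and $\overline{g}(\bf) = \bb$. Since the reduction homomorphism $\Sp(V) \cong \SL(2,\Z) \to \SL(2,\Z/2\Z) \cong \Sp(\bV)$ is surjective, we may lift $\overline{g}$ to a symplectic automorphism $g$ of $V$. Setting $x = g(e)$ and $y = g(f)$, the pair $x, y$ is a symplectic basis of $V$ with $\bx = \overline{g}(\be) = \ba$ and $\by = \overline{g}(\bf) = \bb$, as required.

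The main content — indeed the only nontrivial input — is Lemma~\ref{lem:mod-2-sigma}, which is already established; once $\bV = \langle \ba, \bb\rangle$ is known, the passage to an integral symplectic basis is routine. The one step deserving a word of care, and the reason I would lift at the level of symplectic automorphisms rather than of individual vectors, is that a naively chosen pair of lifts of $\ba$ and $\bb$ need not be a $\Z$-basis of $V$: the sublattice they generate can have odd index greater than $1$. Working inside $\Sp(V)$ avoids this issue entirely.
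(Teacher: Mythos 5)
Your proof is correct and follows essentially the route the paper intends: the paper states this as an immediate corollary of Lemma~\ref{lem:mod-2-sigma} (which identifies \(\bV = \langle \ba, \bb \rangle\)) and leaves the passage to an integral symplectic basis implicit. Your explicit lift via the surjectivity of \(\Sp(V) \to \Sp(\bV)\), i.e.\ of \(\SL(2,\Z) \to \SL(2,\Z/2\Z)\), correctly fills in that routine step, including the valid caveat that arbitrary lifts of \(\ba, \bb\) to \(V\) need not form a (symplectic) basis.
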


\begin{lemma} \label{lem:gen-3-subsurface}
    Let \(g \geq 4\), and let \(W \subset H_1(S_g)\) be a symplectic
    summand of rank \(6\). Suppose \(X_1, X_2, Y_1, Y_2 \subset W\) are
    symplectic summands of rank \(2\) such that
    \begin{itemize}
        \item \(X_1 \perp X_2, Y_1 \perp Y_2\) and \(Y_2 \subset X_1 \oplus X_2\);
        \item \(\sigma(X_1) = \sigma(Y_1)\) and \(\sigma(X_2) = \sigma(Y_2)\).
    \end{itemize}
    Then in \(H_2(\I_g)\) we have
    \[
    \A(X_1, X_2) = \A(Y_1, Y_2)
    .\]
\end{lemma}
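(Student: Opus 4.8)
The plan is to work entirely inside the rank-$6$ symplectic subgroup $W$, which corresponds to a separating curve $\delta$ of genus $3$ on $S_g$ (this is where $g\geq 4$ is used: such a $\delta$ exists and bounds a complementary piece of positive genus). Write $X_1 \oplus X_2 = P \subset W$, a rank-$4$ symplectic subgroup, so that $Y_2 \subset P$ and also, since $\sigma(Y_1) = \sigma(X_1)$ with $X_1 \subset P$, Corollary~\ref{cor:mod-2-decomposition} forces $\mathbf{Y_1} \subset \mathbf{P}$; I would like to arrange the genuinely topological statement $Y_1 \subset P$ as well, but since everything below only uses $\sigma$-values and Proposition~\ref{prop:key-relation-lin}, working mod $2$ suffices. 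The target equality $\A(X_1,X_2)=\A(Y_1,Y_2)$ is equivalent, by Corollary~\ref{cor:z2-vector-space} and Proposition~\ref{prop:ab-cyc-ord-gen-3} (so $H_2^{\mathrm{ab,sep}}$ is an $\mathbb F_2$-vector space), to showing $\A(X_1,X_2)+\A(Y_1,Y_2)=0$ in $H_2(\I_g)$.

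The key step is a double application of Proposition~\ref{prop:key-relation-lin}. First I would find a rank-$6$ symplectic subgroup $U \subset H_1(S_g)$ with $U^\perp$ of rank $\geq 6$ and with $P = X_1 \oplus X_2 \subset U$; concretely, $U$ can be taken to be $W$ itself if $g \geq 6$, or enlarged/adjusted so that both $\rank U \geq 6$ and $\rank U^\perp \geq 6$ when $g=4,5$ — here one must be slightly careful, and the case $g=4$ (where $\rank H_1 = 8$) may require instead taking $U$ of rank $6$ containing $P$ and noting $\rank U^\perp = 2$, which fails hypothesis (1) of Proposition~\ref{prop:key-relation-lin}. I expect this to be the main obstacle: for small $g$ one cannot simultaneously have both a genus-$3$ piece containing $P$ and a genus-$\geq 3$ complement. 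The fix is to route the argument through Lemma~\ref{lem:sum-relation-1} (as the remark after Proposition~\ref{prop:key-relation} indicates the genus-$2$ case reduces to genus-$\geq 3$), expressing the genus-$1$ curves bounding $X_i, Y_i$ via genus-$1$ pieces inside a larger ambient subsurface and applying the relation there. Once an appropriate $U$ is fixed, Proposition~\ref{prop:key-relation-lin} says that for symplectic subgroups $V_1,\dots,V_n$ inside $U$ (or inside $U^\perp$), the relation $\sum_i \A(V_i, U) = 0$ holds iff $\sum_i \sigma(V_i) \in \{0, \sigma(U)\}$.

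The remaining computation is to show $\sigma(X_1) + \sigma(X_2) + \sigma(Y_1) + \sigma(Y_2) = 0$ (which is immediate from the hypotheses $\sigma(X_1)=\sigma(Y_1)$ and $\sigma(X_2)=\sigma(Y_2)$, since we are in characteristic $2$), and then to bootstrap from the "one fixed argument $U$" form of the relation to the two-argument form $\A(X_1,X_2)=\A(Y_1,Y_2)$. For the latter I would use the BP-type relations and Lemma~\ref{lem:sum-relation-1} to slide the second argument: first establish $\A(X_1, U) + \A(X_2, U) + \A(Y_1, U) + \A(Y_2, U) = 0$ for a suitable auxiliary $U$ disjoint from all four pieces, then use that inside $U^\perp$ one may replace $U$ by each of $X_2, Y_2$ in turn — i.e.\ apply Proposition~\ref{prop:key-relation-lin} with roles swapped so that $X_2$ (resp.\ $Y_2$) plays the role of the fixed subgroup and $X_1$ (resp.\ $Y_1$) the variable one. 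Combining these, together with $\A(X_i, X_i) = 0$ and antisymmetry, collapses everything to the single desired equality $\A(X_1, X_2) = \A(Y_1, Y_2)$. The bulk of the write-up will be the careful bookkeeping of which subgroups lie in which $U$ or $U^\perp$ so that the rank-$\geq 6$ hypotheses of Proposition~\ref{prop:key-relation-lin} are met at each application; the algebra with $\sigma$-values is trivial by the mod-$2$ hypotheses.
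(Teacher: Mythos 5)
Your proposal correctly identifies Proposition~\ref{prop:key-relation-lin} as the engine and the mod-$2$ identity $\sigma(X_1)+\sigma(X_2)+\sigma(Y_1)+\sigma(Y_2)=0$ as trivial, but the step you call ``bootstrapping from the one-fixed-argument form to the two-argument form'' is precisely the content of the lemma, and your sketch of it does not go through. Knowing $\A(X_1,U)+\A(X_2,U)+\A(Y_1,U)+\A(Y_2,U)=0$ for an auxiliary $U$ orthogonal to all four pieces gives no access to $\A(X_1,X_2)$ and $\A(Y_1,Y_2)$, and the proposed move of ``replacing $U$ by $X_2$ (resp.\ $Y_2$)'' hits two concrete problems. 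First, the mixed cycles this would require, such as $\A(Y_1,X_2)$ or $\A(X_1,Y_2)$, are in general not even defined: nothing in the hypotheses makes $Y_1$ orthogonal to $X_2$, and $X_1\perp Y_2$ holds only when $Y_2=X_2$. Second, if you fix $X_2$ and try to relate $\A(X_1,X_2)$ to $\A(W,X_2)$, the $\sigma$-condition of Proposition~\ref{prop:key-relation-lin} forces a third term, since $\sigma(X_1)+\sigma(W)=\sigma(X_2)+\sigma(X_3)$ lies in $\{0,\sigma(X_2)\}$ only in degenerate situations. The paper's proof supplies exactly the missing bridge: complete to splittings $W=X_1\oplus X_2\oplus X_3=Y_1\oplus Y_2\oplus Y_3$ and chain
\[
\A(X_1,X_2)=\A(W,X_2)+\A(X_3,X_2)=\A(W,Y_2)+\A(X_3,Y_2)=\A(W,Y_2)+\A(Y_3,Y_2)=\A(Y_1,Y_2),
\]
where the successive equalities are applications of Proposition~\ref{prop:key-relation-lin} with fixed subgroup $X_2$, then $W$ and $X_3$, then $Y_2$ (using $\sigma(X_3)=\sigma(Y_3)$), then $Y_2$ again. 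Here the hypothesis $Y_2\subset X_1\oplus X_2$ --- which your outline records but never uses --- is exactly what guarantees $Y_2\perp X_3$, so that the one mixed cycle $\A(X_3,Y_2)$ occurring in the chain is well defined. Without some such third, bridging subgroup your ``collapse'' has no mechanism.

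Separately, the obstacle you flag for $g=4,5$ is spurious and stems from a misreading: Proposition~\ref{prop:key-relation-lin} requires only \emph{one} of its two conditions in each application, not $\rank U\geq 6$ and $\rank U^{\perp}\geq 6$ simultaneously. In the chain above the fixed subgroup is either $W$ itself (rank $6$, variables inside $W$, condition (1)) or a rank-$2$ subgroup whose orthogonal complement has rank $2g-2\geq 6$ (condition (2)); both are available for every $g\geq 4$, so no detour through Lemma~\ref{lem:sum-relation-1} is needed.
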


\begin{proof}
    Let \(X_3, Y_3 \subset W\) be symplectic summands of rank \(2\) such that
    \begin{equation} \label{eq:decomp}
        \begin{aligned}
            W &= X_1 \oplus X_2 \oplus X_3, \\
            W &= Y_1 \oplus Y_2 \oplus Y_3
        .\end{aligned}
    \end{equation}
    Since \(\sigma(W^{\perp}) = \sigma(W)\),
    Proposition~\ref{prop:key-relation-lin} implies that
    \begin{align*}
        \A(X_1, X_2) + \A(X_3, X_2) &= \A(W^{\perp}, X_2), \\
        \A(Y_1, Y_2) + \A(Y_3, Y_2) &= \A(W^{\perp}, Y_2) 
    .\end{align*}
    Since \(\sigma(X_2) = \sigma(Y_2)\),
    Proposition~\ref{prop:key-relation-lin} implies \(\A(W^{\perp}, X_2) =
    \A(W^{\perp}, Y_2)\). Therefore, the desired equality is equivalent to
    \[
    \A(X_3, X_2) = \A(Y_3, Y_2)
    .\]
    Since \(Y_2 \subset X_1 \oplus X_2\), we have \(Y_2 \perp X_3\), so
    \(\A(X_3, Y_2)\) is well-defined. Proposition~\ref{prop:key-relation-lin}
    then gives \(\A(X_3, X_2) = \A(X_3, Y_2)\), as \(\sigma(X_2) =
    \sigma(Y_2)\). Thus it remains to prove that
    \[
    \A(X_3, Y_2) = \A(Y_3, Y_2)
    .\]
    From \eqref{eq:decomp} we have \(\sigma(X_3) = \sigma(Y_3)\), so the
    equality follows from Proposition~\ref{prop:key-relation-lin}.
\end{proof}

Now we are ready to prove Proposition~\ref{prop:injective-2}.

\begin{proof}[Proof of Proposition~\ref{prop:injective-2}]
    Let \(U_1, U_2, V_1, V_2 \subset H_1(S_g)\) be symplectic summands of
    rank \(2\), bounded by the curves \(\delta_1, \delta_2, \gamma_1,
    \gamma_2\), respectively. Let \(a_1, b_1\) be a symplectic basis for
    \(U_1\), and \(a_2, b_2\) a symplectic basis for \(U_2\). Then we have
    \begin{align*}
        \sigma(V_1) &= \sigma(U_1) = \overline{\ba}_1 \overline{\bb}_1, \\
        \sigma(V_2) &= \sigma(U_2) = \overline{\ba}_2 \overline{\bb}_2
    .\end{align*}

    By Corollary~\ref{cor:mod-2-decomposition} applied to \(V_2\), there
    exists a symplectic basis \(x_2, y_2\) for \(V_2\) such that, for some
    \(v, w \in H_1(S_g)\), we have
    \begin{align*}
        x_2 &= a_2 + 2 v, \\
        y_2 &= b_2 + 2 w
    .\end{align*}
    Thus, we can choose \(a_3, b_3 \in \langle a_1, b_1, a_2, b_2
    \rangle^\perp\) such that \(a_3 \cdot b_3 = 1\) and
    \[
    x_2 = 2 \zeta_1 a_1 + 2 \eta_1 b_1 + (2 \zeta_2 + 1) a_2 + 2 \eta_2 b_2 + 2 \zeta_3 a_3
    .\]
    Let
    \[
    a_2' = (2\alpha_1 + 1)a_2 + 2\alpha_2 b_2 + 2\alpha_3 a_3,
    \quad \text{with } \gcd(2\alpha_1 + 1, \alpha_2, \alpha_3) = 1
    ,\]
    be a primitive representative of \((2\zeta_2 + 1)a_2 + 2\eta_2 b_2 + 2\zeta_3 a_3\).
    Then \(\gcd(2\alpha_1 + 1, 4\alpha_2, 4\alpha_3) = 1\), and there exist integers
    \(\beta_1, \beta_2, \beta_3 \in \Z\) such that
    \[
    (2\alpha_1 + 1)(2\beta_1 + 1) + 4\alpha_2 \beta_2 + 4\alpha_3 \beta_3 = 1
    .\]
    Let
    \[
    b_2' = (2 \beta_1 + 1) b_2 - 2 \beta_2 a_2 + 2 \beta_3 b_3.
    \]
    Then \(a_2' \cdot b_2' = 1\), and \(U_2' = \langle a_2', b_2' \rangle\)
    satisfies \(U_2' \perp U_1\). From Proposition~\ref{prop:key-relation-lin},
    it follows that \(\A(U_1, U_2) = \A(U_1, U_2')\).

    For some integer \(\zeta_2' \in \Z\), we have
    \[
    x_2 = 2 \zeta_1 a_1 + 2 \eta_1 b_1 + (2 \zeta_2' + 1) a_2'
    ,\]
    and we can choose \(a_3' \in \langle a_1, b_1, a_2', b_2'
    \rangle^{\perp}\) such that 
    \[
    y_2 = 2 \lambda_1 a_1 + 2 \mu_1 b_1 + 2 \lambda_2' a_2' + (2 \mu_2' + 1)
    b_2' + 2 \lambda_3' a_3'
    .\]
    
    There exist \(x_1', y_1' \in H_1(S_g)\) such that:
    \begin{itemize}
        \item \(\bx_1' = \ba_1\) and \(\by_1' = \bb_1\);
        \item \(\langle x_1', y_1' \rangle \perp V_2\);
        \item \(x_1' \cdot y_1' = 1\).
    \end{itemize}
    Indeed, we may set
    \begin{align*}
        x_1' &= (2 \zeta_2' + 1) (2 \mu_2' + 1) a_1 + (2 \mu_2' + 1) 2 \eta_1
        b_2' + \left( 2 \eta_1 \cdot 2 \lambda_2' - (2 \zeta_2' + 1) \cdot 2
        \mu_1 \right) a_2'
        + a_4'\\
        y_1' &= (2 \zeta_2' + 1) (2 \mu_2' + 1) b_1 - (2 \mu_2' + 1) 2 \zeta_1 b_2' +
        \left(-2 \zeta_1 \cdot 2 \lambda_2' + (2 \zeta_2' + 1) \cdot 2 \lambda_1  \right) a_2' + 2 \nu
        b_4'
    ,\end{align*}
    where \(a_4', b_4' \in \langle a_1, b_1, a_2', b_2', a_3', b_3'
    \rangle^{\perp}\) with \(a_4' \cdot b_4' = 1\), and the integer constant
    \(\nu\) is chosen so that \(x_1' \cdot y_1' = 1\). This is possible
    because 
    \[
    (2 \zeta_2' + 1)^2 (2 \mu_2' + 1)^2 \equiv 1 \pmod{4}
    .\] 
    It follows that, for \(V_1' = \langle x_1', y_1' \rangle\), we have
    \[
    \sigma(V_1') = \sigma(V_1) = \overline{\ba}_1 \overline{\bb}_1
    .\]
    Proposition~\ref{prop:key-relation-lin} then implies that
    \(\A(V_1', V_2) = \A(V_1, V_2)\).
    Let \(y_2' = y_2 - 2 \lambda_3' a_3'\) and set \(V_2' = \langle x_2,
    y_2' \rangle\). Then, again by Proposition~\ref{prop:key-relation-lin}, we
    obtain \(\A(V_1', V_2) = \A(V_1', V_2')\).

    The proposition now follows from Lemma~\ref{lem:gen-3-subsurface} applied
    to \(U_1, U_2', V_1', V_2' \subset W\), where \(W = \langle a_1, b_1, a_2', b_2', a_4',
    b_4' \rangle\).
\end{proof}

    \section{Abelian cycles in \(H_k(\I_g)\)} \label{sec:order-2-hk}

In this section, we extend several results on
\(H_2^{\mathrm{ab,sep}}(\I_g)\) to the higher homology groups \(H_k(\I_g)\).

\begin{proposition} \label{prop:abelian-cycle-order-higher}
    Let \(\delta_1, \dots, \delta_k\) be pairwise disjoint, pairwise
    nonisotopic separating simple closed curves on \(S_g\). Then, for \(g
    \geq 3\) and \(3 \leq k \leq 2g-3\), we have
    \[
    2 \A(T_{\delta_1}, \dots, T_{\delta_k}) = 0
    .\] 
\end{proposition}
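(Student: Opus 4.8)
The plan is to reduce everything to Lemma~\ref{lem:gen-2-relation}, possibly after one lantern-relation surgery on the configuration. I would cut $S_g$ along $\delta_1,\dots,\delta_k$ to get the subsurfaces $\Sigma_1,\dots,\Sigma_{k+1}$ and record their adjacencies by the tree $T$ with these $k+1\geq 4$ pieces as vertices and the curves $\delta_j$ as edges (the edge $\delta_j$ joining the two pieces it separates). Since the $\delta_j$ are essential and pairwise nonisotopic, any genus-$0$ piece has at least three boundary curves; in particular every leaf of $T$, and every valence-$2$ vertex of $T$, carries a surface of genus $\geq 1$.

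The first case I would dispose of is when some $\Sigma_i$ has genus $\geq 2$, or when some leaf $\Sigma_r$ of $T$ has a neighbor $\Sigma_q$ of valence $2$. In the former situation Lemma~\ref{lem:gen-2-relation}(1) applies directly. In the latter, writing $\delta_i$ for the edge between $\Sigma_r$ and $\Sigma_q$ and $\delta_j$ for the other edge at $\Sigma_q$, the curve $\delta_j$ bounds the subsurface $\Sigma_r\cup\Sigma_q$, which has genus $g(\Sigma_r)+g(\Sigma_q)\geq 1+1=2$ and contains exactly one further curve, $\delta_i$, the rest being disjoint from it; so Lemma~\ref{lem:gen-2-relation}(2) applies. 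Thus I may assume henceforth that no $\Sigma_i$ has genus $\geq 2$ (so every leaf of $T$ has genus $1$) and that every leaf of $T$ has a neighbor of valence $\geq 3$.

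In this remaining case I would first note that deleting all leaves of $T$ (or observing that $T=K_{1,k}$ when no internal edge survives) exhibits an internal vertex $\Sigma_q$ adjacent to at least two leaves $\Sigma_a,\Sigma_b$, via curves $\delta_a,\delta_b$; both of these leaves are genus-$1$ surfaces. Then I would pick a pair of pants $P\subset\Sigma_q$ with $\partial P=\delta_a\cup\delta_b\cup\eta$, where $\eta$ is the boundary of a regular neighborhood in $\Sigma_q$ of $\delta_a\cup\delta_b$ together with a connecting arc (and, if $\Sigma_q$ is already a pair of pants, I instead let $\eta$ be its third boundary curve). One checks that $\eta$ is essential, separating in $S_g$, disjoint from and nonisotopic to every $\delta_i$, and bounds the genus-$2$ subsurface $P\cup\Sigma_a\cup\Sigma_b$. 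Applying Lemma~\ref{lem:sum-relation-1} with $\gamma_1=\delta_b$, $\gamma_2=\delta_a$ (whose outer piece $\Sigma_a$ has genus $1$), $\gamma_3=\eta$, and auxiliary curves $\delta_1,\dots,\widehat{\delta_b},\dots,\delta_k$, the $\gamma_2$-term $\A(T_{\delta_a},T_{\delta_1},\dots,\widehat{T_{\delta_b}},\dots,T_{\delta_k})$ has a repeated Dehn twist, the $\gamma_1$-term is $\pm\A(T_{\delta_1},\dots,T_{\delta_k})$, and the $\gamma_3$-term is $\pm\A(T_\eta,T_{\delta_1},\dots,\widehat{T_{\delta_b}},\dots,T_{\delta_k})$ --- or, in the pair-of-pants case, again a term with a repeated twist. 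Doubling this identity, the repeated-twist terms vanish by antisymmetry of abelian cycles (property (2), swapping the two equal slots), and $2\A(T_\eta,T_{\delta_1},\dots,\widehat{T_{\delta_b}},\dots,T_{\delta_k})=0$ by Lemma~\ref{lem:gen-2-relation}(2) (here $\eta$ bounds $P\cup\Sigma_a\cup\Sigma_b$, of genus $2$, containing exactly the one further curve $\delta_a$); hence $2\A(T_{\delta_1},\dots,T_{\delta_k})=0$.

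The hard part, I expect, is exactly this last case. Lemma~\ref{lem:gen-2-relation} is built around a valence-$2$ vertex or a genus-$\geq 2$ piece, but configurations such as a genus-$1$ piece with three genus-$1$ handles attached along $\delta_1,\delta_2,\delta_3$ on $S_4$ (so $T=K_{1,3}$ with all pieces of genus $1$) have neither, and the abelian cycle there is nonzero by Theorem~\ref{thm:main-3}. The surgery above is what converts such a ``star'' into a configuration that does contain a valence-$2$ vertex; the subtlety is to arrange the lantern relation so that every auxiliary abelian cycle it produces collapses to $2$-torsion because a Dehn twist gets repeated, leaving only the desired relation after doubling.
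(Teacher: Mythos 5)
Your argument is correct, and it rests on the same two pillars as the paper's proof: Lemma~\ref{lem:sum-relation-1} (the lantern/BP surgery over a pair of pants) and Lemma~\ref{lem:gen-2-relation}; in fact your final move --- tubing the two genus-one caps \(\Sigma_a, \Sigma_b\) across a pair of pants \(P \subset \Sigma_q\) and applying Lemma~\ref{lem:gen-2-relation}(2) to the resulting genus-two subsurface bounded by \(\eta\) --- is essentially the paper's Lemma~\ref{lem:no-partitioning-curves}. Where you genuinely differ is the reduction to that configuration. The paper first normalizes the partition using the grouping-curve machinery: Lemma~\ref{lem:gen-0-part-curve} (a genus-zero cap base forces the cycle to vanish outright) and Lemma~\ref{lem:change-cb-gen-1} (grouping curves with genus-one cap bases can be traded for outermost curves), and only then performs the surgery. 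You instead organize the cases by the dual tree: a piece of genus at least \(2\), or a leaf with a valence-two neighbor, is disposed of directly by conditions (1) and (2) of Lemma~\ref{lem:gen-2-relation}, and otherwise a pruning argument produces an internal vertex with two genus-one leaf caps, where one application of Lemma~\ref{lem:sum-relation-1} suffices because the unwanted terms (the \(\gamma_2\)-term, and also the \(\gamma_3\)-term in the pair-of-pants subcase) contain a repeated Dehn twist and vanish after doubling, while the remaining \(\eta\)-term is \(2\)-torsion by Lemma~\ref{lem:gen-2-relation}(2). This buys a shorter proof of the proposition that needs neither the exact vanishing of Lemma~\ref{lem:gen-0-part-curve} nor the repartitioning of Lemma~\ref{lem:change-cb-gen-1}; the paper's longer route earns those normal-form lemmas, which it reuses later in Proposition~\ref{prop:ab-sep-gen-k} and Theorem~\ref{thm:main-3}. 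When writing this up, keep explicit the two small verifications you gestured at: that \(\eta\) is nonisotopic to every remaining \(\delta_i\) when \(\Sigma_q\) is not a pair of pants (true, since \(\Sigma_q \setminus P\) is then not an annulus, and the region between \(\eta\) and \(\delta_a\) or \(\delta_b\) has genus one), and the tree fact that in the residual case some internal vertex meets two leaves (a leaf of the pruned tree with only one leaf neighbor would have valence two in the original tree, which you have excluded); both are fine as sketched.
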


\begin{remark}
    Combining
    Propositions~\ref{prop:johnson3},~\ref{prop:abelian-cycle-order},~and~\ref{prop:abelian-cycle-order-higher},
    we obtain \(2 \A(T_{\delta_1}, \dots, T_{\delta_k}) = 0\) for \(1
    \leq k \leq 2g-3\).
\end{remark}

We introduce some terminology needed for the proof of
Proposition~\ref{prop:abelian-cycle-order-higher}. Throughout this section, we
assume that the surface \(S_g\) has genus at least \(3\).

We call a set of curves \(\{\delta_1, \dots, \delta_k\}\) on \(S_g\) an
\emph{admissible partition} if the curves \(\delta_1, \dots, \delta_k\)
are pairwise disjoint, pairwise nonisotopic, separating simple closed curves.
For convenience, we will also say that \(\delta_1, \dots, \delta_k\) form
an admissible partition, implicitly referring to the set
\(\{\delta_1, \dots, \delta_k\}\).

Let \(\delta_1, \dots, \delta_k\) be an admissible partition of \(S_g\), and
fix this set of curves. Denote by \(\SSS\) the set of surfaces obtained by
cutting \(S_g\) along \(\delta_1 \cup \cdots \cup \delta_k\).

We call a curve \(\delta_j\) \emph{outermost} if there exists a surface
\(\Sigma \in \SSS\) such that \(\partial \Sigma = \delta_j\).
We call \(\Sigma\) a \emph{cap over \(\delta_j\)} and denote it by \(\Cap(\delta_j)\).
The set of outermost curves among \(\delta_1, \dots, \delta_k\) is denoted
by \(\OP(\{\delta_1, \dots, \delta_k\})\), or simply by \(\OP\) when the context
makes clear which admissible partition is considered.

An admissible partition \(\delta_1, \dots, \delta_k\) of \(S_g\) is called
\emph{simple} if, for each \(\delta_i \in \OP\), we have \(g(\Cap(\delta_i)) = 1\).

We call a curve \(\delta_i\) \emph{grouping} if \(\delta_i\) is not outermost
and there exists a surface \(\Sigma \in \SSS\) such that \(\delta_i \subset
\partial \Sigma\) and \(\partial \Sigma
\subset \delta_i \cup \OP\); we call \(\Sigma\) a \emph{cap base of
\(\delta_i\)} and denote it by \(\CB(\delta_i)\). In other words, \(\delta_i\)
is grouping if it bounds a subsurface containing only outermost curves. The
cap base is uniquely defined when there are at least two grouping curves; if
there is exactly one grouping curve, then either of the two possible surfaces
may be chosen.

For a grouping curve \(\delta_i\), let \(\OP_i\) denote the subset of
\(\OP\) consisting of curves lying in \(\CB(\delta_i)\).
We then say that \(\delta_i\) \emph{groups the curves} \(\delta_{i_1}, \dots, \delta_{i_k}\),
where \(\OP_i = \{\delta_{i_1}, \dots, \delta_{i_k}\}\).
For such a curve \(\delta_i\), we define
\[
\GCap(\delta_i) = \CB(\delta_i) \cup \Cap(\delta_{i_1}) \cup \cdots \cup \Cap(\delta_{i_k}).
\]
The set of all grouping curves among \(\delta_1, \dots, \delta_k\) is denoted by
\(\GP(\{\delta_1, \dots, \delta_k\})\), or simply by \(\GP\) when the context
is clear.

\begin{lemma} \label{lem:gen-0-part-curve}
    Let \(\delta_1, \dots, \delta_k\) be a simple admissible partition
    of \(S_g\). 
    If there exists a grouping curve \(\delta_i \in \GP\) that has a cap base
    of genus~\(0\), then
    \(\A(T_{\delta_1}, \dots, T_{\delta_k}) = 0\) in \(H_k(\I_g)\).
\end{lemma}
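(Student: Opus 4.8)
The plan is to prove the statement by induction on the number $m := |\OP_i|$ of outermost curves grouped by $\delta_i$. Since $g(\CB(\delta_i)) = 0$, the surface $\CB(\delta_i)$ is a sphere with $m+1$ boundary circles, and $m \geq 2$: with $m = 0$ it would be a disc and $\delta_i$ nullhomotopic, and with $m = 1$ it would be an annulus and $\delta_i$ isotopic to the grouped curve. After relabelling, assume $\delta_i = \delta_1$ groups $\delta_2, \dots, \delta_{m+1}$, so $B := \CB(\delta_1)$ has genus $0$ with $\partial B = \delta_1 \cup \dots \cup \delta_{m+1}$, each $\Cap(\delta_j)$ ($2 \le j \le m+1$) has genus $1$ by simplicity, and the remaining curves $\delta_{m+2}, \dots, \delta_k$ all lie in $S_g \setminus \GCap(\delta_1)$. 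It is convenient to prove the following self-contained statement by induction on $m \ge 2$: \emph{if a separating curve $\delta$ bounds a subsurface $G = B \cup C_1 \cup \dots \cup C_m$ with $B$ of genus $0$, $\partial B = \delta \cup \epsilon_1 \cup \dots \cup \epsilon_m$, each $C_j$ of genus $1$ with $\partial C_j = \epsilon_j$, and $\rho_1, \dots, \rho_r$ are separating curves which, together with $\delta, \epsilon_1, \dots, \epsilon_m$, are pairwise disjoint and pairwise nonisotopic and lie in $S_g \setminus G$, then $\A(T_\delta, T_{\epsilon_1}, \dots, T_{\epsilon_m}, T_{\rho_1}, \dots, T_{\rho_r}) = 0$}; the lemma is the special case $\delta = \delta_1$, $\epsilon_j = \delta_{j+1}$, $C_j = \Cap(\delta_{j+1})$, $B = \CB(\delta_1)$. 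Throughout I use the standard fact that an abelian cycle with two equal arguments vanishes, since it then factors through $H_k(\Z^{k-1}) = 0$.

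For the base case $m = 2$, $B$ is a pair of pants bounded by $\delta, \epsilon_1, \epsilon_2$. Apply Lemma~\ref{lem:sum-relation-1} to $B$ with $\gamma_1 = \delta$, $\gamma_2 = \epsilon_1$, $\gamma_3 = \epsilon_2$ — so $\Sigma_1 = S_g \setminus G$, $\Sigma_2 = C_1$ (genus $1$), $\Sigma_3 = C_2$ — and with the $(k-1)$-tuple $(\epsilon_1, \epsilon_2, \rho_1, \dots, \rho_r)$; hypothesis (3) holds because $\rho_1, \dots, \rho_r$ lie in $\Sigma_1$ and the remark after Lemma~\ref{lem:sum-relation-1} permits $\epsilon_1, \epsilon_2$ to repeat $\gamma_2, \gamma_3$. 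The identity becomes
\[
\A(T_\delta, T_{\epsilon_1}, T_{\epsilon_2}, T_{\rho_1}, \dots, T_{\rho_r}) + \A(T_{\epsilon_2}, T_{\epsilon_1}, T_{\epsilon_2}, T_{\rho_1}, \dots, T_{\rho_r}) = \A(T_{\epsilon_1}, T_{\epsilon_1}, T_{\epsilon_2}, T_{\rho_1}, \dots, T_{\rho_r}),
\]
and the two terms other than $\A(T_\delta, \dots)$ each contain a repeated Dehn twist, hence vanish.

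For the inductive step $m \ge 3$, choose a separating curve $\theta$ in the interior of $B$ cobounding a pair of pants $P \subset B$ with $\delta$ and $\epsilon_m$ (possible since $B$ is planar with $m+1 \ge 4$ boundary circles); the other piece $Q$ of $B$ cut along $\theta$ has genus $0$ with $\partial Q = \theta \cup \epsilon_1 \cup \dots \cup \epsilon_{m-1}$, and $\theta$ bounds the genus-$(m-1)$ subsurface $G_\theta := Q \cup C_1 \cup \dots \cup C_{m-1}$. One checks directly that $\theta$ is disjoint from and nonisotopic to each of $\epsilon_1, \dots, \epsilon_m, \rho_1, \dots, \rho_r$, so $\theta$ (in the role of $\delta$), together with $\epsilon_1, \dots, \epsilon_{m-1}$ and the outside curves $\epsilon_m, \rho_1, \dots, \rho_r$, is a configuration of the above type with $m-1$ in place of $m$; by the inductive hypothesis
\[
\A(T_\theta, T_{\epsilon_1}, \dots, T_{\epsilon_{m-1}}, T_{\epsilon_m}, T_{\rho_1}, \dots, T_{\rho_r}) = 0.
\]
Now apply Lemma~\ref{lem:sum-relation-1} to the pair of pants $P$ with $\gamma_1 = \delta$, $\gamma_2 = \epsilon_m$ (so $\Sigma_2 = C_m$ has genus $1$), $\gamma_3 = \theta$ — here $\Sigma_1 = S_g \setminus G$ and $\Sigma_3 = G_\theta$ — and with the $(k-1)$-tuple $(\epsilon_1, \dots, \epsilon_m, \rho_1, \dots, \rho_r)$; hypothesis (3) holds since $\epsilon_1, \dots, \epsilon_{m-1}$ lie in $\Sigma_3$ and $\rho_1, \dots, \rho_r$ lie in $\Sigma_1$. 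This gives
\[
\A(T_\delta, T_{\epsilon_1}, \dots, T_{\epsilon_m}, T_{\rho_1}, \dots, T_{\rho_r}) + \A(T_\theta, T_{\epsilon_1}, \dots, T_{\epsilon_m}, T_{\rho_1}, \dots, T_{\rho_r}) = \A(T_{\epsilon_m}, T_{\epsilon_1}, \dots, T_{\epsilon_m}, T_{\rho_1}, \dots, T_{\rho_r}),
\]
whose right-hand side vanishes (repeated $T_{\epsilon_m}$) and whose second left-hand term vanishes by the inductive hypothesis; hence the first term is $0$, completing the induction.

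The main obstacle is organizational rather than computational: one must isolate the correct inductive statement and the correct auxiliary curve $\theta$ so that a single use of Lemma~\ref{lem:sum-relation-1} peels off one genus-$1$ handle, produces a configuration of exactly the same shape with one fewer handle, and forces both ``error'' abelian cycles to degenerate — one by a repeated Dehn twist, the other by the inductive hypothesis. Verifying hypothesis (3) of Lemma~\ref{lem:sum-relation-1} at each stage, and the non-isotopy of $\theta$ from the remaining curves, requires some care, and it is essential (not merely cosmetic) that an abelian cycle with a repeated argument is genuinely zero rather than just $2$-torsion.
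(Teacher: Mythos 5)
Your proposal is correct and follows essentially the same route as the paper: induction on the number of grouped curves, with the base case handled by applying Lemma~\ref{lem:sum-relation-1} to the pair of pants $\CB(\delta_i)$ so that the other two abelian cycles degenerate via repeated Dehn twists, and the inductive step peeling off one genus-$1$ cap by introducing an auxiliary separating curve in the genus-$0$ cap base cobounding a pair of pants with $\delta_i$ and one grouped curve. The only difference is organizational: you phrase a self-contained inductive statement instead of re-verifying that the modified collection is again a simple admissible partition, which is a harmless (indeed slightly cleaner) repackaging of the paper's argument.
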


\begin{proof}
    If there are at least two grouping curves, then the cap base of
    \(\delta_i\) is uniquely defined. If \(\delta_i\) is the only grouping
    curve, we denote by \(\CB(\delta_i)\) a cap base of genus~\(0\) (if both
    cap bases have genus~\(0\), either may be chosen).

    Without loss of generality, assume that the grouping curve \(\delta_1\),
    which groups the curves \(\delta_2, \dots, \delta_m\) (with \(m
    \geq 3\)), satisfies \(g(\CB(\delta_1)) = 0\). The proof proceeds by
    induction on \(m\).

    \textit{Base case.} For \(m = 3\), the statement follows from
    Lemma~\ref{lem:sum-relation-1} applied to \(\delta_1, \delta_2,
    \delta_3\). Indeed, we have
    \[
    \A(T_{\delta_1}, \dots, T_{\delta_k}) + \A(T_{\delta_3}, T_{\delta_2},
    T_{\delta_3}, \dots, T_{\delta_k})
    = \A(T_{\delta_2}, T_{\delta_2}, T_{\delta_3}, \dots, T_{\delta_k})
    ,\]
    which gives \(\A(T_{\delta_1}, \dots, T_{\delta_k}) = 0\).

    \textit{Induction step.} Assume that the statement holds for all \(m
    \leq \ell-1\), and consider the case \(m = \ell\). Choose a separating
    simple closed curve \(\gamma \subset \CB(\delta_1)\) such that \(\delta_1
    \cup \delta_2 \cup \gamma\) bounds a pair of pants. Then, by
    Lemma~\ref{lem:sum-relation-1}, we have
    \[
    \A(T_{\delta_1}, \dots, T_{\delta_k}) + \A(T_{\gamma}, T_{\delta_2}, \dots, T_{\delta_k})
    = \A(T_{\delta_2}, T_{\delta_2}, \dots, T_{\delta_k})
    ,\]
    thus \(\A(T_{\delta_1}, \dots, T_{\delta_k}) = -\A(T_{\gamma},
    T_{\delta_2}, \dots, T_{\delta_k})\). Since \(\gamma\) groups \(\delta_3,
    \dots, \delta_k\), the induction hypothesis implies \(\A(T_{\gamma},
    T_{\delta_2}, \dots, T_{\delta_k}) = 0\), yielding the desired result.
\end{proof}

\begin{lemma} \label{lem:change-cb-gen-1}
    Let \(\delta_1, \dots, \delta_k\) be a simple admissible partition
    of \(S_g\) such that all surfaces obtained by cutting \(S_g\) along
    \(\delta_1, \dots, \delta_k\) have genus~\(1\).
    Then there exists a simple admissible partition
    \(\delta_1', \dots, \delta_k'\) such that:
    \begin{enumerate}
        \item none of the curves \(\delta_1', \dots, \delta_k'\) is a grouping curve;
        \item \(\A(T_{\delta_1}, \dots, T_{\delta_k}) =
              \A(T_{\delta_1'}, \dots, T_{\delta_k'})\).
    \end{enumerate}
\end{lemma}

\begin{proof}
    It suffices to show the following:
    for a curve \(\delta_1\) that groups the curves \(\delta_2, \dots, \delta_m\),
    there exists an outermost separating simple closed curve \(\delta_1'\) on
    \(\CB(\delta_1)\) that is disjoint from \(\delta_1, \dots, \delta_k\) and such that
    \[
    \A(T_{\delta_1}, T_{\delta_2}, \dots, T_{\delta_k}) = \A(T_{\delta_1'},
    T_{\delta_2}, \dots, T_{\delta_k})
    .\]
    Since \(g(\CB(\delta_1)) = 1\), the hypotheses of the lemma are satisfied
    for the curves \(\delta_1', \delta_2, \dots, \delta_k\).
    
    Choose an outermost separating simple closed curve \(\delta_1'\) of genus
    \(1\) and a separating simple closed curve \(\gamma\) on
    \(\CB(\delta_1)\), as illustrated in
    Figure~\ref{fig:gen1-grouping-curve}.

    \begin{figure}[H]
        \centering
    \def\svgwidth{.5\columnwidth}
\begingroup%
  \makeatletter%
  \providecommand\color[2][]{%
    \errmessage{(Inkscape) Color is used for the text in Inkscape, but the package 'color.sty' is not loaded}%
    \renewcommand\color[2][]{}%
  }%
  \providecommand\transparent[1]{%
    \errmessage{(Inkscape) Transparency is used (non-zero) for the text in Inkscape, but the package 'transparent.sty' is not loaded}%
    \renewcommand\transparent[1]{}%
  }%
  \providecommand\rotatebox[2]{#2}%
  \newcommand*\fsize{\dimexpr\f@size pt\relax}%
  \newcommand*\lineheight[1]{\fontsize{\fsize}{#1\fsize}\selectfont}%
  \ifx\svgwidth\undefined%
    \setlength{\unitlength}{312.96837274bp}%
    \ifx\svgscale\undefined%
      \relax%
    \else%
      \setlength{\unitlength}{\unitlength * \real{\svgscale}}%
    \fi%
  \else%
    \setlength{\unitlength}{\svgwidth}%
  \fi%
  \global\let\svgwidth\undefined%
  \global\let\svgscale\undefined%
  \makeatother%
  \begin{picture}(1,0.29145123)%
    \lineheight{1}%
    \setlength\tabcolsep{0pt}%
    \put(0,0){\includegraphics[width=\unitlength,page=1]{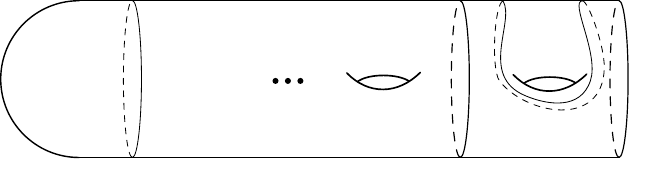}}%
    \put(0.95801496,0.00221427){\color[rgb]{0,0,0}\makebox(0,0)[t]{\lineheight{1.25}\smash{\begin{tabular}[t]{c}$\delta_1$\end{tabular}}}}%
    \put(0,0){\includegraphics[width=\unitlength,page=2]{gen1-grouping-curve.pdf}}%
    \put(0.55227323,0.2376798){\color[rgb]{0,0,0}\makebox(0,0)[t]{\lineheight{1.25}\smash{\begin{tabular}[t]{c}$\delta_m$\end{tabular}}}}%
    \put(0,0){\includegraphics[width=\unitlength,page=3]{gen1-grouping-curve.pdf}}%
    \put(0.30189157,0.23826606){\color[rgb]{0,0,0}\makebox(0,0)[t]{\lineheight{1.25}\smash{\begin{tabular}[t]{c}$\delta_3$\end{tabular}}}}%
    \put(0.81132581,0.24352303){\color[rgb]{0,0,0}\makebox(0,0)[t]{\lineheight{1.25}\smash{\begin{tabular}[t]{c}$\delta_1'$\end{tabular}}}}%
    \put(0.71151256,0.01496325){\color[rgb]{0,0,0}\makebox(0,0)[t]{\lineheight{1.25}\smash{\begin{tabular}[t]{c}$\gamma$\end{tabular}}}}%
    \put(0.2118269,0.00531335){\color[rgb]{0,0,0}\makebox(0,0)[t]{\lineheight{1.25}\smash{\begin{tabular}[t]{c}$\delta_2$\end{tabular}}}}%
  \end{picture}%
\endgroup%

        \caption{The curves \(\delta_1'\) and \(\gamma\) on \(\GCap(\delta_1)\).}
        \label{fig:gen1-grouping-curve}
    \end{figure}

    Apply Lemma~\ref{lem:sum-relation-1} to the curves \(\gamma, \delta_1', \delta_1\):
    \[
    \A(T_{\delta_1}, T_{\delta_2}, \dots, T_{\delta_k}) + \A(T_{\gamma}, T_{\delta_2}, \dots, T_{\delta_k}) 
    = \A(T_{\delta_1'}, T_{\delta_2}, \dots, T_{\delta_k})
    .\]
    By Lemma~\ref{lem:gen-0-part-curve}, we have \(\A(T_{\gamma},
    T_{\delta_2}, \dots, T_{\delta_k}) = 0\) in \(H_k(\I_g)\). Hence,
    \[
    \A(T_{\delta_1}, \dots, T_{\delta_k}) = \A(T_{\delta_1'}, T_{\delta_2}, \dots, T_{\delta_k})
    ,\]
    and the lemma follows.
\end{proof}

\begin{lemma} \label{lem:no-partitioning-curves}
    Let \(\delta_1, \dots, \delta_k\) be a simple admissible partition of \(S_g\),
    and assume that none of the curves \(\delta_1, \dots, \delta_k\) is a
    grouping curve. Then in \(H_k(\I_g)\) we have
    \[
    2 \A(T_{\delta_1}, \dots, T_{\delta_k}) = 0
    .\]
\end{lemma}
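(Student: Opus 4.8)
The plan is to first pin down the combinatorial type forced by the hypotheses, and then to feed this into the auxiliary relations of \S\ref{sec:relations-in-hk} together with Lemma~\ref{lem:gen-0-part-curve}. \emph{Step 1 (the configuration is a star).} Since the $\delta_i$ are separating, the surfaces of $\SSS$ and the curves $\delta_1,\dots,\delta_k$ form a tree $T$ whose leaves are exactly the caps $\Cap(\delta_i)$. I would argue that $T$ has at most one non-leaf vertex: otherwise the subtree $T'$ of $T$ spanned by the non-leaf vertices has a leaf $w$, which then has a unique non-leaf $T$-neighbour, joined to it by a non-outermost curve $\delta_j$, and at least one leaf $T$-neighbour since $\deg_T(w)\geq 2$; thus $w$ is a cap base for $\delta_j$, making $\delta_j$ a grouping curve — a contradiction. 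For $k\geq 2$ the tree $T$ therefore has exactly one non-leaf vertex $\Sigma_0$ and is a star: every $\delta_i$ is outermost, each $\Cap(\delta_i)$ has genus $1$ by simplicity, and $S_g$ is the union of $\Sigma_0$ with the $k$ caps glued along $\delta_1,\dots,\delta_k$. Set $m=g(\Sigma_0)=g-k\geq 0$. The case $k=2$ is already covered by Propositions~\ref{prop:abelian-cycle-order} and~\ref{prop:ab-cyc-ord-gen-3}, so from now on I assume $k\geq 3$ and treat $m\geq 1$ and $m=0$ separately; throughout I write $\bar\delta$ for $(T_{\delta_2},\dots,T_{\delta_k})$.

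\emph{Step 2 (the case $m\geq 1$).} Here I would mimic the proof of Lemma~\ref{lem:gen-2-relation}, applying Lemmas~\ref{lem:sum-relation-1} and~\ref{lem:sum-relation-2} to one and the same triple of curves and exploiting the different order of $\gamma_1,\gamma_2,\gamma_3$ in the two statements, but using a handle of $\Sigma_0$ in place of a genus-$\geq 2$ subsurface. Decompose $\Sigma_0 = Q\cup_{\gamma_A} H\cup_{\gamma_B} B$, where $Q$ is a pair of pants with $\partial Q=\delta_1\cup\gamma_A\cup\gamma_B$, the piece $H$ has genus $1$ with $\partial H=\gamma_A$ (it cuts off one handle of $\Sigma_0$), and $B$ has genus $m-1$ with $\partial B=\gamma_B\cup\delta_2\cup\dots\cup\delta_k$; one checks that $\gamma_A,\gamma_B$ are separating and nonisotopic to every $\delta_i$. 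Applying Lemma~\ref{lem:sum-relation-1} to $(\delta_1,\gamma_A,\gamma_B)$ — the middle curve $\gamma_A$ bounds the genus-$1$ surface $H$, and $\delta_2,\dots,\delta_k$ lie in the piece $B\cup\Cap(\delta_2)\cup\dots\cup\Cap(\delta_k)$ bounded by $\gamma_B$ — gives
\[
\A(T_{\delta_1},\bar\delta)+\A(T_{\gamma_B},\bar\delta)=\A(T_{\gamma_A},\bar\delta).
\]
Applying Lemma~\ref{lem:sum-relation-2} to the \emph{same} triple $(\delta_1,\gamma_A,\gamma_B)$ — condition~(3) of that lemma holds with $\theta$ taken inside the genus-$2$ subsurface $Q\cup\Cap(\delta_1)\cup H$ bounded by $\gamma_B$, splitting it into two genus-$1$ pieces, so that $\gamma_B\cup\theta$ bounds a genus-$1$ surface disjoint from $\delta_2,\dots,\delta_k$ — gives
\[
\A(T_{\delta_1},\bar\delta)+\A(T_{\gamma_A},\bar\delta)=\A(T_{\gamma_B},\bar\delta).
\]
Adding the two identities yields $2\A(T_{\delta_1},\dots,T_{\delta_k})=0$.

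\emph{Step 3 (the case $m=0$, i.e.\ $k=g$).} Now $\Sigma_0$ is a sphere with $k\geq 3$ holes. I would choose a separating curve $\gamma\subset\Sigma_0$ so that $\delta_1\cup\delta_2\cup\gamma$ bounds a pair of pants inside $\Sigma_0$; then on its other side $\gamma$ cuts off the planar piece with boundary $\gamma,\delta_3,\dots,\delta_k$. Applying Lemma~\ref{lem:sum-relation-1} to $(\delta_1,\delta_2,\gamma)$ — the middle curve $\delta_2$ bounds the genus-$1$ cap $\Cap(\delta_2)$ — gives $\A(T_{\delta_1},\bar\delta)+\A(T_{\gamma},\bar\delta)=\A(T_{\delta_2},T_{\delta_2},T_{\delta_3},\dots,T_{\delta_k})$, whose right-hand side is $2$-torsion by property~(2) of abelian cycles. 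Doubling, $2\A(T_{\delta_1},\dots,T_{\delta_k})=-2\A(T_{\gamma},\bar\delta)$, so it suffices to prove $2\A(T_{\gamma},\bar\delta)=0$. For $k\geq 4$, the curves $\gamma,\delta_2,\dots,\delta_k$ form a simple admissible partition in which $\gamma$ is the unique grouping curve, and its cap base may be chosen to be the genus-$0$ piece with boundary $\gamma,\delta_3,\dots,\delta_k$; Lemma~\ref{lem:gen-0-part-curve} then gives $\A(T_{\gamma},\bar\delta)=0$. For $k=3$, the curve $\gamma$ is isotopic to $\delta_3$, so $\A(T_{\gamma},T_{\delta_2},T_{\delta_3})=\A(T_{\delta_3},T_{\delta_2},T_{\delta_3})$ is $2$-torsion by property~(2). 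In either case $2\A(T_{\delta_1},\dots,T_{\delta_k})=0$, which completes the argument.

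\emph{Expected main obstacle.} The delicate point is Step 2: one must position the auxiliary curves $\gamma_A,\gamma_B$ so that the hypotheses of \emph{both} Lemma~\ref{lem:sum-relation-1} (which requires the middle curve to bound a genus-$1$ surface) and Lemma~\ref{lem:sum-relation-2} (which requires the last curve to satisfy the $\theta$-condition) are met for the single triple $(\delta_1,\gamma_A,\gamma_B)$. This is possible precisely because $\Sigma_0$ contains a handle, which is exactly why the case $m=0$ has to be handled separately in Step 3. The rest — checking that the subsurfaces produced by the various cuts have the stated genera and boundary patterns, and that in Step 3 one really lands in the situation of Lemma~\ref{lem:gen-0-part-curve} — is routine bookkeeping.
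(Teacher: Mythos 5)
Your proof is correct, but it takes a longer route than the paper's. The paper's argument is essentially two lines: choose $\gamma$ cobounding a pair of pants $P$ with $\delta_1,\delta_2$; Lemma~\ref{lem:sum-relation-1} gives $\A(T_{\delta_1},T_{\delta_2},\dots,T_{\delta_k})=\A(T_{\gamma},T_{\delta_2},\dots,T_{\delta_k})$ (the term $\A(T_{\delta_2},T_{\delta_2},\dots)$ vanishes outright, since an abelian cycle with a repeated entry factors through $H_k(\Z^{k-1})=0$ -- you only claim it is $2$-torsion, which also suffices); and then, since $\gamma$ bounds the genus-$2$ subsurface $\Cap(\delta_1)\cup P\cup\Cap(\delta_2)$ containing exactly the one curve $\delta_2$, Lemma~\ref{lem:gen-2-relation}(2) kills $2\A(T_{\gamma},T_{\delta_2},\dots,T_{\delta_k})$. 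The key point you organize differently is where the genus-$2$ subsurface comes from: the paper builds it out of the two genus-$1$ caps plus the pair of pants, which exists independently of the genus of the central piece, so no case distinction is needed and Lemma~\ref{lem:sum-relation-2} never enters. You instead look for a handle inside the central piece $\Sigma_0$, which forces your split into $m\geq 1$ (where you redo, by hand, the ``apply Lemmas~\ref{lem:sum-relation-1} and~\ref{lem:sum-relation-2} to the same triple'' trick underlying Lemma~\ref{lem:gen-2-relation}) and $m=0$ (where you pass to the partition $\gamma,\delta_2,\dots,\delta_k$ and invoke Lemma~\ref{lem:gen-0-part-curve}, with the $k=3$, i.e.\ $\gamma\simeq\delta_3$, degeneration handled separately). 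I checked the hypotheses in each of your applications (the decomposition $\Sigma_0=Q\cup H\cup B$, the genus-$1$ condition on $\Sigma_2$, the $\theta$-condition for $\gamma_B$, and that $\gamma,\delta_2,\dots,\delta_k$ is a simple admissible partition with $\gamma$ grouping $k-2\geq 2$ curves over a genus-$0$ cap base when $k\geq 4$); they all hold, so the argument goes through. Your Step~1 is a genuine plus: the paper asserts without proof that the absence of grouping curves makes every curve outermost, and your pruning argument on the dual tree (yielding the star configuration) is exactly the justification needed, and it is also what guarantees the pair of pants used in both proofs exists.
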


\begin{proof}
    Since there are no grouping curves, all curves \(\delta_1, \dots,
    \delta_k\) are outermost. Choose a separating simple closed curve
    \(\gamma\) on \(S_g\), disjoint from \(\delta_1, \dots, \delta_k\), such
    that \(\delta_1 \cup \delta_2 \cup \gamma\) bounds a pair of pants. Then,
    applying Lemma~\ref{lem:sum-relation-1} to the curves \(\gamma, \delta_1,
    \delta_2\), we obtain
    \[
    \A(T_{\delta_1}, T_{\delta_2}, \dots, T_{\delta_k}) +
    \A(T_{\gamma}, T_{\delta_2}, \dots, T_{\delta_k}) =
    \A(T_{\delta_2}, T_{\delta_2}, \dots, T_{\delta_k}).
    \]
    That is, \(\A(T_{\delta_1}, \dots, T_{\delta_k}) = \A(T_{\gamma},
    T_{\delta_2}, \dots, T_{\delta_k})\). The curve \(\gamma\) bounds a
    genus-2 subsurface that contains \(\delta_2\). Then, by point~(2) of
    Lemma~\ref{lem:gen-2-relation}, we have \(2 \A(T_{\gamma}, T_{\delta_2},
    \dots, T_{\delta_k}) = 0\) in \(H_k(\I_g)\), which completes the proof.
\end{proof}

\begin{proof}[Proof of Proposition~\ref{prop:abelian-cycle-order-higher}]
    It follows from Lemma~\ref{lem:gen-2-relation} that
    \(2 \A(T_{\delta_1}, \dots, T_{\delta_k}) = 0\) in \(H_k(\I_g)\)
    if any of the following conditions hold:
    \begin{itemize}
        \item for some outermost curve \(\delta_i \in \OP\), we have
        \(g(\Cap(\delta_i)) \geq 2\);
        \item for some grouping curve \(\delta_j \in \GP\), we have
        \(g(\CB(\delta_j)) \geq 2\).
    \end{itemize}
    
    We now assume that \(g(\Cap(\delta_i)) = 1\) for all \(\delta_i \in \OP\) and
    \(g(\CB(\delta_j)) \leq 1\) for all \(\delta_j \in \GP\). If
    \(g(\CB(\delta)) = 0\) for some grouping curve \(\delta\), then the claim
    follows from Lemma \ref{lem:gen-0-part-curve}. Otherwise, if
    \(g(\CB(\delta)) = 1\) for all \(\delta \in \GP\), we first apply Lemma
    \ref{lem:change-cb-gen-1} to obtain an admissible partition \(\delta_1',
    \dots, \delta_k'\), and then apply Lemma \ref{lem:no-partitioning-curves}
    to this partition.
\end{proof}

\begin{corollary}
    For \(g \geq 3\) and \(3 \leq k \leq 2g-3\), the group
    \(H_k^{\mathrm{ab,sep}}(\I_g)\) is a \(\Z / 2\Z\)-vector space.
\end{corollary}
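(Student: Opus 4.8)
The plan is to deduce this statement directly from Proposition~\ref{prop:abelian-cycle-order-higher}, since all of the substantive work has already been carried out in the lemmas feeding into it. Recall that, by definition, \(H_k^{\mathrm{ab,sep}}(\I_g)\) is the subgroup of the abelian group \(H_k(\I_g)\) generated by the abelian cycles \(\A(T_{\delta_1}, \dots, T_{\delta_k})\), where \(\delta_1, \dots, \delta_k\) ranges over all collections of pairwise disjoint, pairwise nonisotopic separating simple closed curves on \(S_g\). An abelian group is canonically a \(\Z/2\Z\)-vector space precisely when it has exponent dividing \(2\); since \(H_k^{\mathrm{ab,sep}}(\I_g)\) is a subgroup of an abelian group, it is automatically abelian, and so it suffices to verify that every element is annihilated by \(2\).

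First I would invoke Proposition~\ref{prop:abelian-cycle-order-higher}, which asserts exactly that \(2\A(T_{\delta_1}, \dots, T_{\delta_k}) = 0\) in \(H_k(\I_g)\) for every such collection of curves, under the hypotheses \(g \geq 3\) and \(3 \leq k \leq 2g-3\) — the same range as in the corollary. Given an arbitrary element \(\xi \in H_k^{\mathrm{ab,sep}}(\I_g)\), write it as an integer linear combination of generators, \(\xi = \sum_i n_i\, \A(T_{\delta_1^{(i)}}, \dots, T_{\delta_k^{(i)}})\); then \(2\xi = \sum_i n_i \cdot 2\A(T_{\delta_1^{(i)}}, \dots, T_{\delta_k^{(i)}}) = 0\). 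Hence \(H_k^{\mathrm{ab,sep}}(\I_g)\) has exponent dividing \(2\), and is therefore a \(\Z/2\Z\)-vector space.

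I do not anticipate any obstacle here: the corollary is a purely formal consequence of Proposition~\ref{prop:abelian-cycle-order-higher}, and the only point requiring (trivial) attention is that the generating family appearing in the definition of \(H_k^{\mathrm{ab,sep}}(\I_g)\) is literally the family of abelian cycles to which that proposition applies. I would also note, for context, that for \(k > 2g-3\) the group \(H_k^{\mathrm{ab,sep}}(\I_g)\) is trivial by the Euler characteristic count in the remark following Theorem~\ref{thm:main-1} (so a fortiori a \(\Z/2\Z\)-vector space), which is why one restricts to the range \(3 \leq k \leq 2g-3\) — though this case lies outside the scope of the corollary as stated.
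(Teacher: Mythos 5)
Your argument is correct and is exactly the paper's (implicit) reasoning: the corollary follows immediately from Proposition~\ref{prop:abelian-cycle-order-higher}, since a subgroup of an abelian group generated by elements killed by \(2\) has exponent dividing \(2\) and is therefore a \(\Z/2\Z\)-vector space. No further comment is needed.
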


As another consequence, we obtain the analogue of Proposition~\ref{prop:ab-sep-gen}.

\begin{proposition} \label{prop:ab-sep-gen-k}
    For \(g \geq 3\), the following hold:
    \begin{itemize}
        \item if \(k < g\), then the \(\Z/2\Z\)-vector space \(H_k^{\mathrm{ab,sep}}(\I_g)\) is
        generated by \(\A(T_{\delta_1}, \dots, T_{\delta_k})\),
        where \(\delta_1, \dots, \delta_k\) are pairwise disjoint, pairwise
        nonisotopic, separating simple closed curves of genus \(1\);
        
        \item if \(k \geq g\), then the \(\Z / 2\Z\)-vector space
        \(H_k^{\mathrm{ab,sep}}(\I_g)\) is trivial.
    \end{itemize}
\end{proposition}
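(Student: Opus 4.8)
The plan is to prove the proposition in three steps: reduce an arbitrary abelian cycle to a $\Z/2\Z$-sum of abelian cycles of genus-$1$ curves, classify the latter, and then handle the range $k\ge g$.

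\emph{Step 1 (reduction to genus-$1$ curves).} I would first establish the following sub-claim: for a separating curve $\delta$ disjoint from, and non-isotopic to, pairwise disjoint, pairwise non-isotopic separating curves $e_1,\dots,e_n$, the abelian cycle $\A(T_\delta,T_{e_1},\dots,T_{e_n})$ is either $0$ or a $\Z/2\Z$-sum of abelian cycles $\A(T_\gamma,T_{e_1},\dots,T_{e_n})$ with $\gamma$ a genus-$1$ separating curve disjoint from and non-isotopic to the $e_i$. Granting this, I apply it to $\delta_1$, then to $\delta_2$, and so on, leaving the remaining curves as passive spectators — legitimate because Lemma~\ref{lem:sum-relation-1} modifies one curve at a time — so that $\A(T_{\delta_1},\dots,T_{\delta_k})$ becomes a sum of abelian cycles all of whose curves have genus $1$; this already gives the first bullet when $k<g$. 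To prove the sub-claim I induct on $\sum_j g(\delta_j)$, always choosing $\delta$ to be an \emph{innermost} curve of genus $\ge 2$, i.e.\ one whose lower-genus side $A$ contains no other curve of genus $\ge 2$. If $A$ contains no $e_i$, I split a genus-$1$ handle off $A$ via the lantern relation (Lemma~\ref{lem:sum-relation-1} with $\gamma_1=\delta$ and a pair of pants plus a one-holed torus placed inside $A$), replacing $\delta$ by a genus-$1$ curve and a curve of genus $g(A)-1$. If $A$ does contain some $e_i$, let $\Sigma^\ast$ be the piece of $A$ adjacent to $\delta$ obtained by cutting $A$ along the curves it contains; since $\delta$ is innermost, every subsurface of $A$ lying beyond a boundary curve of $\Sigma^\ast$ is a one-holed torus. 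If $g(\Sigma^\ast)\ge 1$ I split off a genus-$1$ handle inside $\Sigma^\ast$ exactly as before; if $g(\Sigma^\ast)=0$ I apply Lemma~\ref{lem:sum-relation-1} to a pair of pants in $\Sigma^\ast$ with cuffs $\delta$, a boundary curve $f_1$ of $\Sigma^\ast$, and a fourth curve enclosing the remaining cuffs, using that the cap on $f_1$ is a one-holed torus; the resulting relation has a summand that is an abelian cycle with a repeated curve, hence vanishes, and iterating drives $g(\delta)$ down to the base case $g(A)=2$, where the abelian cycle itself is $0$.

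\emph{Step 2 (classification and the case $k\ge g$).} The key observation is that if every curve of an admissible partition has genus $1$, then every curve is outermost, because the genus-$1$ side of such a curve is a one-holed torus and a one-holed torus contains no essential separating curve. Hence such a partition is star-shaped: one central piece $\Sigma_0$ capped by $k$ one-holed tori, so $g(\Sigma_0)=g-k$. In particular there are no all-genus-$1$ partitions when $k>g$, so by Step 1 every $\A(T_{\delta_1},\dots,T_{\delta_k})$ is an empty sum, i.e.\ $0$; and for $k=g$ every all-genus-$1$ partition has $g(\Sigma_0)=0$, so it remains to kill those. For $g=3$, $\Sigma_0$ is a pair of pants bounded by $\delta_1,\delta_2,\delta_3$, and Lemma~\ref{lem:sum-relation-1} with $\gamma_1=\delta_1$, $\gamma_2=\delta_2$, $\gamma_3=\delta_3$ (the genus-$1$ cap being the torus on $\delta_2$, with $\delta_2,\delta_3$ as spectators) gives $\A(T_{\delta_1},T_{\delta_2},T_{\delta_3})=0$. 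For $g\ge 4$, pick a curve $\gamma$ inside $\Sigma_0$ with $\delta_1\cup\delta_2\cup\gamma$ bounding a pair of pants and $\gamma$ enclosing $\delta_3,\dots,\delta_g$; Lemma~\ref{lem:sum-relation-1} (again the torus on $\delta_2$, spectators $\delta_2,\dots,\delta_g$) gives $\A(T_{\delta_1},\dots,T_{\delta_g})=\A(T_\gamma,T_{\delta_2},\dots,T_{\delta_g})$. In the new partition $\gamma$ is the unique grouping curve and one of its two cap bases is the genus-$0$ surface $\Sigma_0$ minus the pair of pants, which groups the $g-2\ge 2$ curves $\delta_3,\dots,\delta_g$; Lemma~\ref{lem:gen-0-part-curve} then yields $\A(T_\gamma,T_{\delta_2},\dots,T_{\delta_g})=0$.

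\emph{Expected main obstacle.} The technical heart is Step 1, and within it the case $g(\Sigma^\ast)=0$: the delicate point is to arrange that $\delta$ is chosen innermost, so that the surfaces beyond the genus-$0$ piece are forced to be one-holed tori — this is exactly what makes the ``repeated curve vanishes'' mechanism run and carries the induction down to the pair-of-pants base case. The remaining ingredients — that Lemma~\ref{lem:sum-relation-1} changes a single curve at a time, and the bookkeeping of genera and non-isotopy of the newly introduced curves — should be routine.
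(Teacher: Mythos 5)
Your proposal is correct, and it uses the same basic toolkit as the paper (Lemma~\ref{lem:sum-relation-1}, the vanishing of abelian cycles with a repeated entry, and Lemma~\ref{lem:gen-0-part-curve}), but it organizes the reduction differently and is actually more complete at one point. The paper first splits the caps of outermost curves into genus-\(1\) pieces and then processes grouping curves via their cap bases (Lemmas~\ref{lem:gen-0-part-curve} and the argument of Proposition~\ref{prop:ab-sep-gen}), whereas you run a single induction on the total genus \(\sum_j g(\delta_j)\), always operating on an innermost curve of genus \(\geq 2\); your case \(g(\Sigma^\ast)=0\) re-derives inline what the paper extracts as Lemma~\ref{lem:gen-0-part-curve}, and your observation that a genus-\(1\) curve in an admissible partition is automatically outermost (its genus-\(1\) side is a one-holed torus, hence contains no essential separating curve not isotopic to the boundary) is exactly what makes the endgame clean. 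The most significant difference is the case \(k=g\): after the reduction one can land on the star-shaped partition of \(g\) genus-\(1\) curves with genus-\(0\) central piece, in which every curve is outermost and there is no grouping curve, so the paper's closing assertion that for \(k\geq g\) ``there is necessarily at least one grouping curve'' does not literally apply to it; your extra move --- for \(g=3\) applying Lemma~\ref{lem:sum-relation-1} directly to \(\delta_1,\delta_2,\delta_3\), and for \(g\geq 4\) trading \(\delta_1\) for a curve \(\gamma\) that becomes a grouping curve with genus-\(0\) cap base grouping the \(g-2\geq 2\) curves \(\delta_3,\dots,\delta_g\), then invoking Lemma~\ref{lem:gen-0-part-curve} --- supplies precisely the missing step, so your treatment of \(k\geq g\) is the more careful one. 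The only blemish is presentational: your ``sub-claim'' fixes the curve \(\delta\) to be replaced, while its proof chooses \(\delta\) innermost among all genus-\(\geq 2\) curves of the partition, so as stated the sub-claim is not what the induction proves; this is harmless, since the induction on total genus (replace an innermost genus-\(\geq 2\) curve by a genus-\(1\) curve and a curve of genus \(g(A)-1\), or kill the term) directly yields the reduction of every abelian cycle to a \(\Z/2\Z\)-sum of all-genus-\(1\) cycles, which is all that Steps~1--2 need.
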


\begin{proof}
    First, we show that an arbitrary abelian cycle \(\A(T_{\delta_1}, \dots,
    T_{\delta_k})\) can be expressed as a sum of abelian cycles
    \(\A(T_{\theta_{1,i}}, \dots, T_{\theta_{k,i}})\), each satisfying 
    \[
    \max \{g(\theta_{j,i}) \mid \theta_{j,i} \in \OP(\theta_{1,i}, \dots,
    \theta_{k,i})\} = 1
    .\] 
    It suffices to show that, for the outermost curve (without loss of
    generality, \(\delta_1\)), there exist separating simple closed curves
    \(\varepsilon_1, \dots, \varepsilon_n\) of genus \(1\), disjoint from
    \(\delta_2, \dots, \delta_k\), such that
    \[
    \A(T_{\delta_1}, \dots, T_{\delta_k}) = \sum_{i=1}^{n} \A(T_{\varepsilon_i}, T_{\delta_2},
    \dots, T_{\delta_k})
    .\]
    The same argument as in the proof of Proposition~\ref{prop:ab-sep-gen}
    applies here.

    Thus, it remains to prove the statement for the abelian cycle
    \(\A(T_{\delta_1}, \dots, T_{\delta_k})\), where 
    \[
    \max \{ g(\delta) \mid \delta \in \OP \} = 1
    .\] 
    If none of the curves \(\delta_1, \dots,
    \delta_k\) is grouping, the claim follows immediately.  Otherwise,
    consider a grouping curve, which we may assume without loss of generality
    to be \(\delta_1\). If \(g(\CB(\delta_1)) = 0\), then \(\A(T_{\delta_1},
    \dots, T_{\delta_k}) = 0\) by Lemma~\ref{lem:gen-0-part-curve}. If
    \(g(\CB(\delta_1)) > 0\), we can choose separating simple closed curves
    \(\gamma_1, \dots, \gamma_n \subset \CB(\delta_1)\) of genus \(1\) such
    that in \(H_k(\I_g)\) we have
    \[
    \A(T_{\delta_1}, \dots, T_{\delta_k}) = 
    \sum_{i=1}^{n} \A(T_{\gamma_i}, T_{\delta_2}, \dots, T_{\delta_k})
    .\]
    Indeed, we proceed in the same way as in the proof of
    Proposition~\ref{prop:ab-sep-gen}. Choose separating simple closed curves
    \(\gamma_1, \dots, \gamma_{n+1}\) and \(\eta_1, \dots, \eta_{n-1}\) on
    \(\CB(\delta_1)\), as shown in Figure~\ref{fig:gen-1-generation-higher}.

    \begin{figure}[H]
        \centering
    \def\svgwidth{.6\columnwidth}
    \import{./figures/}{gen-1-generation-higher.pdf_tex}

        \caption{The curves on \(\CB(\delta_1)\).}
        \label{fig:gen-1-generation-higher}
    \end{figure}
    Then, by Lemma~\ref{lem:sum-relation-1}, we have
    \[
    \A(T_{\delta_1}, T_{\delta_2}, \dots, T_{\delta_k}) = \A(T_{\gamma_1}, T_{\delta_2}, \dots,
    T_{\delta_k}) + \A(T_{\eta_1}, T_{\delta_2}, \dots, T_{\delta_k})
    .\]

    Applying the same procedure repeatedly to \(\A(T_{\eta_1}, T_{\delta_2},
    \dots, T_{\delta_k})\) and subsequent terms, we obtain the decomposition 
    \[
    \A(T_{\delta_1}, \dots, T_{\delta_k}) =
    \sum_{i=1}^{n+1} \A(T_{\gamma_i}, T_{\delta_2}, \dots, T_{\delta_k}) .
    \]
    Since \(\A(T_{\gamma_{n+1}}, T_{\delta_2}, \dots, T_{\delta_k}) = 0\) by
    Lemma~\ref{lem:gen-0-part-curve}, it follows that
    \[
    \A(T_{\delta_1}, \dots, T_{\delta_k}) =
    \sum_{i=1}^{n} \A(T_{\gamma_i}, T_{\delta_2}, \dots, T_{\delta_k}) .
    \]
    This yields the desired result.

    Repeating the procedure described above for each grouping curve \(\delta_i
    \in \GP\), we obtain that the abelian cycle \(\A(T_{\delta_1}, \dots,
    T_{\delta_k})\) can be expressed as a sum of abelian cycles
    \(\A(T_{\gamma_{j_1}}, \dots, T_{\gamma_{j_k}})\), where all outermost
    curves among \(\gamma_{j_1}, \dots, \gamma_{j_k}\) have genus~\(1\).

    Hence, if \(k < g\), the claim follows. If \(k \geq g\), there is
    necessarily at least one grouping curve among \(\gamma_{j_1}, \dots,
    \gamma_{j_k}\), and the resulting sum is empty, i.e., zero, as desired.
\end{proof}

Theorem~\ref{thm:main-1} now follows from
Propositions~\ref{prop:abelian-cycle-order},~\ref{prop:ab-cyc-ord-gen-3},~\ref{prop:ab-sep-gen},~\ref{prop:abelian-cycle-order-higher},
and \ref{prop:ab-sep-gen-k}.

\bigskip
\noindent
We conclude the paper by proving a criterion for when an abelian cycle
\(\A(T_{\delta_1}, \dots, T_{\delta_k})\) vanishes in \(H_k(\I_g)\) for \(k <
g\). 

For the proof of Theorem~\ref{thm:main-3}, we use the homomorphism 
\(\sigma_k: H_k^{\mathrm{ab,sep}}(\I_g) \to \wedge^k \B_2'\), which is
induced by the Birman–Craggs–Johnson homomorphism.  
As in the case \(k = 2\), we have
\[
\sigma_k(\A(T_{\delta_1}, \dots, T_{\delta_k})) = \sigma(T_{\delta_1}) \wedge
\cdots \wedge \sigma(T_{\delta_k})
.\]

\begin{proof}[Proof of Theorem~\ref{thm:main-3}]
    First, we prove that \(\A(T_{\delta_1}, \dots, T_{\delta_k}) = 0\) if at
    least one of the subsurfaces obtained by cutting \(S_g\) along \(\delta_1,
    \dots, \delta_k\) has genus \(0\).

    Denote by \(\SSS\) the set of subsurfaces obtained by cutting \(S_g\)
    along \(\delta_1, \dots, \delta_k\). Consider a graph \(\Sfr\) whose
    vertices are the elements of \(\SSS\), with an edge between two
    subsurfaces if they share a boundary component. Note that \(\Sfr\)
    is in fact a tree, since \(\delta_1, \dots, \delta_k\) are all separating
    curves. By the genus of a vertex \(\Sigma \in \Sfr\) we mean a genus of
    the subsurface \(\Sigma\).

    We will call a vertex \(\Sigma \in \Sfr\) of genus \(0\)
    \begin{itemize}
        \item \emph{good} if at least one of its adjacent vertices has genus at least \(1\);
        \item \emph{bad} otherwise, i.e.\ if all its neighbors have genus \(0\).
    \end{itemize}

    By the hypothesis of the theorem, there exists a vertex \(\Sigma \in
    \Sfr\) of genus \(0\). We claim that a good vertex must exist.
    Indeed, suppose that no good vertices exist. Then, starting from
    \(\Sigma\) and traversing the tree, every vertex would have genus \(0\).
    This would contradict the fact that the original surface \(S_g\) has
    positive genus.
    
    Let \(\Sigma\) be a good vertex of genus \(0\). Relabel the curves
    \(\delta_1, \dots, \delta_k\) so that:
    \begin{itemize}
        \item \(\delta_1, \dots, \delta_m\) (with \(m \leq k\)) bound
        \(\Sigma\); and
        \item  the subsurface \(\Sigma_1 \in \SSS \setminus \{\Sigma\}\) with
        \(\delta_1 \subset \partial \Sigma_1\) has genus at least \(1\).
    \end{itemize}
    The proof proceeds by induction on \(m\).
    
    \textit{Base case.}
    Let \(m = 3\). Then we can apply Lemma~\ref{lem:sum-relation-2} to the
    curves \(\delta_1, \delta_2, \delta_3\), obtaining
    \[
    \A(T_{\delta_2}, T_{\delta_2}, \dots, T_{\delta_k})
    + \A(T_{\delta_3}, T_{\delta_2}, T_{\delta_3}, \dots, T_{\delta_k})
    = \A(T_{\delta_1}, \dots, T_{\delta_k})
    ,\]
    from which it follows that \(\A(T_{\delta_1}, \dots, T_{\delta_k}) = 0\).

    \textit{Inductive step.}
    Suppose the statement holds for all \(m \leq \ell\), and let us show
    that it holds for \(m = \ell + 1\). Choose a curve \(\gamma\) such that
    \(\delta_1, \delta_2, \gamma\) bound a genus-\(0\) subsurface. Then, by
    Lemma~\ref{lem:sum-relation-2}, we have
    \[
    \A(T_{\gamma}, T_{\delta_2}, \dots, T_{\delta_k})
    + \A(T_{\delta_2}, T_{\delta_2}, \dots, T_{\delta_k})
    = \A(T_{\delta_1}, \dots, T_{\delta_k})
    ,\]
    that is 
    \[
    \A(T_{\delta_1}, \dots, T_{\delta_k})
    = \A(T_{\gamma}, T_{\delta_2}, \dots, T_{\delta_k})
    .\]
    By the induction hypothesis applied to the \(m-1\) curves \(\gamma,
    \delta_3, \dots, \delta_m\), we conclude that \(\A(T_{\gamma},
    T_{\delta_2}, \dots, T_{\delta_k}) = 0\), and hence
    \(\A(T_{\delta_1}, \dots, T_{\delta_k}) = 0\).

    \bigskip
    To prove the statement in the reverse direction, it suffices to show that
    \[
    \sigma_k(\A(T_{\delta_1}, \dots, T_{\delta_k})) \neq 0
    \] 
    if all subsurfaces obtained by cutting \(S_g\) along \(\delta_1, \dots,
    \delta_k\) have positive genus.

    Using an argument similar to the proof of Lemma~\ref{lem:change-cb-gen-1},
    one can show that there exists an admissible partition \(\delta_1',
    \dots, \delta_k'\) such that:
    \begin{itemize}
        \item there are no grouping curves among \(\delta_1', \dots, \delta_k'\);
        \item all subsurfaces obtained by cutting \(S_g\) along the curves
        \(\delta_1', \dots, \delta_k'\) have positive genus;
        \item \(\A(T_{\delta_1}, \dots, T_{\delta_k}) = \A(T_{\delta_1'}, \dots, T_{\delta_k'})\).
    \end{itemize}
    Therefore, we may assume that all curves \(\delta_1, \dots, \delta_k\) are
    outermost and that the surface \(S_g \setminus \left(
    \Cap(\delta_1) \cup \dots \cup \Cap(\delta_k) \right)\) has positive
    genus. Under this assumption, we can choose a symplectic basis \(\ba_1,
    \dots, \ba_g, \bb_1, \dots, \bb_g\) for \(H_1(\Sigma_g; \Z/2\Z)\) such
    that
    \begin{align*}
        \sigma(T_{\delta_1}) &= \overline{\ba}_1 \overline{\bb}_1 + \cdots +
        \overline{\ba}_{i_1} \overline{\bb}_{i_1}, \\
        \sigma(T_{\delta_2}) &= \overline{\ba}_{i_1+1} \overline{\bb}_{i_1+1}
        + \cdots + \overline{\ba}_{i_2} \overline{\bb}_{i_2}, \\
        &\vdots \\
        \sigma(T_{\delta_k}) &= \overline{\ba}_{i_{k-1}+1}
        \overline{\bb}_{i_{k-1}+1}
        + \cdots + \overline{\ba}_{i_k} \overline{\bb}_{i_k}
    ,\end{align*}
    where \(1 < i_1 < \dots < i_k < g\). Then
    \begin{align*}
        \sigma_k(\A(T_{\delta_1}, \dots, T_{\delta_k})) &= 
        \sigma(T_{\delta_1}) \wedge \cdots \wedge \sigma(T_{\delta_k}) \\
        &=
        (\overline{\ba}_1 \overline{\bb}_1 + \cdots +
        \overline{\ba}_{i_1} \overline{\bb}_{i_1}) \wedge \cdots \wedge
        (\overline{\ba}_{i_{k-1}+1}
        \overline{\bb}_{i_{k-1}+1}
        + \cdots + \overline{\ba}_{i_k} \overline{\bb}_{i_k})
    .\end{align*}
    Since \(i_k < g\), the elements
    \[
    \overline{\ba}_1 \overline{\bb}_1, \dots, \overline{\ba}_{i_k}
    \overline{\bb}_{i_k}
    \] 
    are linearly independent in \(\B_2'\). It then follows that expanding the
    parentheses yields a sum of linearly independent vectors, so
    \[
    \sigma_k(\A(T_{\delta_1}, \dots, T_{\delta_k})) \neq 0
    ,\]
    which implies \(\A(T_{\delta_1}, \dots, T_{\delta_k}) \neq 0\).
\end{proof}

    \printbibliography

@article{kassabov-putman,
    title={Equivariant group presentations and the second homology group of the Torelli group},
    volume={376},
    number={1–2},
    journal={Mathematische Annalen},
    publisher={Springer Science and Business Media LLC},
    author={Kassabov, Martin and Putman, Andrew},
    year={2019},
    pages={227–241},
    hyphenation={english},
}

@book{farbmarg,
	author       = {Benson Farb and Dan Margalit},
	title        = {A Primer on Mapping Class Groups},
	publisher    = {Princeton University Press},
	date         = {2012},
	keywords     = {trusted},
    hyphenation  = {english},
}

@article{minahan,
    title={The second rational homology of the Torelli group is finitely generated},
    author={Minahan, Daniel},
    journal={arXiv preprint arXiv:2307.07082},
    year={2023},
    url={https://arxiv.org/abs/2307.07082},
    hyphenation={english},
}

@article{minahan-putman,
    title={The second rational homology of the Torelli group},
    author={Minahan, Daniel and Putman, Andrew},
    journal={arXiv preprint arXiv:2504.00211},
    url={https://arxiv.org/abs/2504.00211},
    year={2025},
    hyphenation = {english},
}

@article{johnson-ab,
	author       = {Johnson, Dennis},
	title        = {An abelian quotient of the mapping class group $\mathcal{I}_g$},
	journal      = {Mathematische Annalen},
	date         = {1980},
	keywords     = {trusted},
    hyphenation  = {english},
    pages = {225-242},
    volume = {249},
}

@article{johnson1,
	author       = {Johnson, Dennis},
	title        = {The structure of the Torelli group I: A finite set of generators for $\mathcal{I}$},
    journal      = {Annals of Mathematics},
	date         = {1983},
	keywords     = {trusted},
    hyphenation  = {english},
    pages = {423-442},
    volume = {118},
    number = {3}
}

@article{johnson3,
    title={The structure of the Torelli group III: The abelianization of $\mathcal{I}$},
    author={Johnson, Dennis},
    journal={Topology},
    volume={24},
    number={2},
    pages={127-144},
    year={1985},
    hyphenation={english},
}

@article{hain2002,
    author      = {Hain, Richard},
    title       = {The Rational Cohomology Ring of the Moduli Space of Abelian 3-folds},
    journal     = {Mathematical Research Letters},
    volume      = {9},
    number      = {4},
    hyphenation = {english},
    pages       = {473--491},
    year        = {2002},
}

@article{brendle-farb,
    title={The Birman--Craggs--Johnson homomorphism and abelian cycles in the Torelli group},
    author={Brendle, Tara E and Farb, Benson},
    journal={Mathematische Annalen},
    volume={338},
    number={1},
    pages={33--53},
    year={2007},
    publisher={Springer},
    hyphenation={english},
}

@article{johnsonBCJ,
    title={Quadratic forms and the Birman--Craggs homomorphisms},
    author={Johnson, Dennis},
    journal={Transactions of the American Mathematical Society},
    volume={261},
    number={1},
    pages={235--254},
    year={1980},
    hyphenation={english},
}

@article{mess,
    title = {The Torelli groups for genus 2 and 3 surfaces},
    journal = {Topology},
    volume = {31},
    number = {4},
    pages = {775-790},
    year = {1992},
    author = {Geoffrey Mess},
    hyphenation={english},
}

@article{gaifullin,
    title       = {On infinitely generated homology of Torelli groups},
    author      = {Gaifullin, Alexander A},
    journal     = {St. Petersburg Mathematical Journal},
    volume      = {35},
    number      = {6},
    year        = {2024},
    pages       = {959--993},
    hyphenation = {english},
}

@article{bestvina-bux-margalit,
    title={The dimension of the Torelli group},
    author={Bestvina, Mladen and Bux, Kai-Uwe and Margalit, Dan},
    journal={Journal of the American Mathematical Society},
    volume={23},
    number={1},
    pages={61--105},
    year={2010},
    hyphenation={english},
}

@article{vautaw,
    title={Abelian subgroups of the Torelli group},
    volume={2},
    number={1},
    journal={Algebraic \& Geometric Topology},
    publisher={Mathematical Sciences Publishers},
    author={Vautaw, William R},
    year={2002},
    pages={157–170},
    hyphenation={english},
}

@article{gaifullin-spectral,
    author={Gaifullin, Alexander A},
    title = {On a spectral sequence for the action of the {Torelli} group of genus~3 on the complex of cycles},
    journal = {Izvestiya: Mathematics},
    year = {2021},
    volume = {85},
    number = {6},
    pages = {1060--1127},
    hyphenation={english},
}

@inproceedings{kupers-williams,
    title={On the cohomology of Torelli groups},
    author={Kupers, Alexander and Randal-Williams, Oscar},
    booktitle={Forum of Mathematics, Pi},
    volume={8},
    pages={e7},
    year={2020},
    organization={Cambridge University Press},
    hyphenation={english},
}

\end{document}